\begin{document}

\newcommand{\grad}{\mbox{\rm grad}\,}
\newtheorem{theorem}{Theorem}[section]
\newtheorem{lemma}[theorem]{Lemma}
\newtheorem{observation}[theorem]{Observation}
\newtheorem{remark}[theorem]{Remark}
\newtheorem{definition}[theorem]{Definition}
\newtheorem{corollary}[theorem]{Corollary}
\newtheorem{example}[theorem]{Example}
\newtheorem{ansatz}[theorem]{Ansatz}
\newtheorem{xcase}{Case}[subsection]
\newtheorem{ycase}{Case}[section]
\makeatletter
\renewcommand{\theequation}{%
\thesection.\alph{equation}} \@addtoreset{equation}{section}
\makeatother
\title{Affine surfaces which are K\"ahler, para-K\"ahler, or nilpotent K\"ahler}
\author[Calvi\~no-Louzao]{E. Calvi\~no-Louzao, E. Garc\'{i}a-R\'{i}o, P. Gilkey, I. Guti\'errez-Rodr\'iguez, \\	R. V\'{a}zquez-Lorenzo}
\address{ECL: Conseller\'\i a de Cultura, Educaci\'on e Ordenaci\'on Universitaria, Edificio Administrativo San Caetano, 15781 Santiago de Compostela, Spain}
\email{{estebcl@edu.xunta.es}}
\address{EGR-IGR: Faculty of Mathematics, 	University of Santiago de Compostela, 	15782 Santiago de Compostela, Spain}
\email{eduardo.garcia.rio@usc.es; ixcheldzohara.gutierrez@usc.es}
\address{PG: Mathematics Department, \; University of Oregon, \;\; 	Eugene \; OR 97403, \; USA}
\email{gilkey@uoregon.edu}
\address{RVL: Department of Mathematics, IES de Ribadeo Dionisio Gamallo, 27700 Ribadeo, Spain}
\email{{ravazlor@edu.xunta.es}}
\thanks{Supported by projects ED431F 2017/03, and MTM2016-75897-P (Spain).}
\subjclass[2010]{53C21, 53C50, 53B30, 53A15}
\keywords{Riemannian extension, Bach tensor, Ricci soliton, quasi-Einstein metric, affine surface}

\begin{abstract} 
Motivated by the construction of Bach flat neutral signature Riemannian extensions, we study the space of parallel trace free tensors of type $(1,1)$ on an affine surface. It is shown that the existence of such a parallel tensor field is characterized by the recurrence of the symmetric part of the Ricci tensor.

\end{abstract}
\maketitle

\section{Introduction}

\subsection{Bach flat modified Riemannian extensions}
Let $\mathcal{M}=(M,\nabla)$ be an affine surface (see Section~\ref{S1.2} below).
Let $\pi:T^*M\rightarrow M$ be the canonical projection
from the cotangent bundle to $M$. Let $(x^1,x^2)$ be local coordinates on $M$.
Expand $\omega=y_idx^i\in T^*M$ to define canonical coordinates $(x^1,x^2,y_1,y_2)$
on $T^*M$ where, by an abuse of notation, we identify $x^i$ with $\pi^*x^i$.
Let $T=T^i{}_j\partial_{x^i}\otimes dx^j$ be a tensor of type $(1,1)$ and let
$\phi=\phi_{ij}dx^i\circ dx^j$ be a symmetric 2-tensor field
where we adopt the {\it Einstein convention} and sum over repeated indices. The \emph{modified Riemannian extension} is
the invariantly defined Walker metric of neutral signature $(2,2)$ on $T^*M$ given locally by:
$$
g_{\nabla,\phi,T}:=2dx^i\circ dy_i+\left\{y_ry_sT^r{}_iT^s{}_j-2y_r\Gamma_{ij}{}^r+\phi_{ij}\right\}dx^i\circ dx^j\,.
$$

Let $\mathcal{N}:=(N,g)$ be a pseudo-Riemannian manifold with Levi-Civita connection $\nabla^g$ 
and let $\rho_{\nabla^g}$ denote the Ricci tensor. Let $W$ be the Weyl conformal curvature tensor. Then the \emph{Bach tensor} is defined by
$$
\mathfrak{B}_{ij}:={\nabla^g}{}^k{\nabla^g}{}^\ell W_{kij\ell}+\textstyle\frac12\rho_{\nabla^g}{}^{k\ell}W_{kij\ell}\,.
$$
The Bach tensor, which was introduced in \cite{Bach} to study conformal relativity, is trace free and is conformally invariant in dimension four. Bach flat metrics are critical points of the curvature invariant given by the $L^2$-norm of the Weyl tensor. Clearly locally conformally flat metrics as well as Einstein metrics are Bach flat. Moreover, half-conformally flat (i.e., self-dual or anti-self-dual) metrics and conformally Einstein metrics are Bach flat. 
There are few known examples of strictly Bach-flat manifolds, meaning the ones which are neither half conformally flat nor conformally Einstein.
Modified Riemannian extensions provide a tool to construct new examples of strictly Bach flat metrics as follows:

\begin{theorem}{\rm\cite{CGGV}} \label{T1.1}
	Let $\mathcal{M}$ be a connected affine surface equipped with a parallel tensor field $T$.
	Then $(T^*M,g_{\nabla,\phi,T})$ is Bach flat
	if and only if $T$ is either a multiple of the identity or nilpotent.
\end{theorem}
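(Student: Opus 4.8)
The plan is to compute the Bach tensor of $g_{\nabla,\phi,T}$ directly in canonical coordinates and to extract the algebraic conditions on $T$ that force its vanishing. First I would compute the Levi-Civita connection of $g_{\nabla,\phi,T}$; here the hypothesis $\nabla T=0$ is essential, since it eliminates every term involving a derivative of $T$ and keeps the Christoffel symbols polynomial of low degree in the fibre coordinates $y_i$. The Walker structure---the fibre directions $\partial_{y_1},\partial_{y_2}$ span a parallel totally null distribution---then forces the vast majority of the curvature components to vanish, so that the full curvature tensor $R$ is determined by a small block built from the affine curvature of $\nabla$, the symmetric Ricci tensor, and the data $T,\phi$, with an explicit polynomial dependence on $y$.

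From $R$ one reads off the Ricci tensor $\rho_{\nabla^g}$, the scalar curvature (which is zero for metrics of this type), and the Weyl tensor $W$. Substituting into $\mathfrak{B}_{ij}={\nabla^g}{}^k{\nabla^g}{}^\ell W_{kij\ell}+\frac12\rho_{\nabla^g}{}^{k\ell}W_{kij\ell}$ and simplifying yields Bach tensor components that are again polynomial in $y_1,y_2$. Since Bach-flatness must hold for all $y$, each coefficient of each monomial in $y$ must vanish separately, and this decouples the purely algebraic constraints on $T$ from the differential constraints relating $\phi$ and $\nabla$.

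The decisive step is the analysis of the coefficients of top degree in $y$, which isolate the algebraic type of $T$. Writing $T=\tfrac{\tau}{2}\Id+T_0$ with $\tau=\tr T$ and $T_0$ the trace-free part, the Cayley--Hamilton identity gives $T_0^2=-\det(T_0)\Id$, so that type is governed by the two invariants $\det(T_0)$ and $\tau$. I expect these top-degree conditions to reduce exactly to $\det(T_0)=0$ together with $\tau\,T_0=0$. If $T_0=0$ both hold automatically and $T=\tfrac{\tau}{2}\Id$ is a multiple of the identity; if $T_0\neq0$ the second equation forces $\tau=0$, and then $\det(T_0)=0$ gives $T_0^2=0$, so $T=T_0$ is nilpotent. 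Conversely, substituting either a multiple of the identity or a nilpotent $T$ and checking that every Bach coefficient vanishes establishes the ``if'' direction and exhibits the strictly Bach-flat examples advertised above.

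The main obstacle will be managing the Bach tensor computation itself: the double covariant derivative ${\nabla^g}{}^k{\nabla^g}{}^\ell W$ involves fourth derivatives of the metric, and even after the Walker simplifications the intermediate expressions are unwieldy. I would organize the ``only if'' direction by passing to a frame in which $T$ takes its Jordan normal form, so that it splits into showing that each of the three inadmissible types---distinct real eigenvalues, complex eigenvalues, and a non-nilpotent Jordan block---produces a nonzero Bach component, with $\nabla T=0$ constraining $\nabla$ compatibly in each frame. A symbolic verification of the resulting polynomial identities is almost certainly warranted.
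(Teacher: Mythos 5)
You should know at the outset that the paper you are being compared against contains \emph{no proof} of Theorem~\ref{T1.1}: the statement is imported verbatim from \cite{CGGV} and used only as motivation, so the only comparison possible is with the method of that cited source, which is indeed the strategy you outline (compute the curvature of the Walker metric $g_{\nabla,\phi,T}$ in canonical coordinates, note that the components of $\mathfrak{B}$ are polynomial in the fibre variables, and force each coefficient to vanish). Your algebraic reformulation of the target condition is also correct: writing $T=\frac{\tau}{2}\operatorname{id}+T_0$, the pair of conditions $\det(T_0)=0$ and $\tau\,T_0=0$ is equivalent to ``$T$ is a multiple of the identity or nilpotent,'' and it rightly excludes $T=c\operatorname{id}+N$ with $c\neq0$ and $0\neq N$ nilpotent, which correctly reflects that one cannot argue with eigenvalue invariants alone.

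Nevertheless, what you have written is a plan rather than a proof, and it contains one outright error. The error: the scalar curvature of $g_{\nabla,\phi,T}$ is \emph{not} zero. Writing $h_{ij}=y_ry_sT^r{}_iT^s{}_j-2y_r\Gamma_{ij}{}^r+\phi_{ij}$ for the $dx^i\circ dx^j$ block, the Walker structure gives $\nabla^g_{\partial_{x^i}}\partial_{y_j}=\frac12(\partial_{y_j}h_{il})\,\partial_{y_l}$, from which $\rho_{\nabla^g}(\partial_{y_i},\partial_{y_j})=0$ and $\rho_{\nabla^g}(\partial_{x^i},\partial_{y_j})=\frac12\{(T^2)^j{}_i+\operatorname{Tr}(T)\,T^j{}_i\}$; hence the scalar curvature equals $(\operatorname{Tr}T)^2+\operatorname{Tr}(T^2)$ (up to an overall sign convention), which for $T=c\operatorname{id}$ is $6c^2\neq0$. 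Since the Weyl tensor, and therefore $\mathfrak{B}$, involves the scalar curvature, building the computation on this false simplification would corrupt precisely the cases your analysis must separate, namely trace nonzero ($c\operatorname{id}$ to be included, $c\operatorname{id}+N$ to be excluded) versus trace zero. The gap: the decisive assertions are conjectured, not established --- that the top-degree coefficients of $\mathfrak{B}$ in $y$ reduce exactly to $\det(T_0)=0$ and $\tau\,T_0=0$, and that for nilpotent $T$ \emph{every} coefficient vanishes for arbitrary $\phi$ and arbitrary torsion-free $\nabla$ (this last point is the entire content of the ``if'' direction, since $\phi$ and the affine curvature enter the lower-degree coefficients). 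Writing ``I expect'' and deferring to ``a symbolic verification'' leaves the mathematical content of the theorem undone; until that computation is actually carried out --- with the corrected curvature data, and ideally using the normal forms of Lemma~\ref{L2.1}, which are legitimate here because parallel tensors have constant eigenvalues by Lemma~\ref{L1.4} --- there is no proof to certify.
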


The modified Riemannian extensions with $T=c\operatorname{Id}$ are self-dual \cite{CGGV09} and thus one is mainly interested in the nilpotent case. Moreover, for each parallel nilpotent tensor field $T$, there is an infinite family of Bach flat modified Riemannian extensions since the deformation tensor field $\phi$ does not play any role in Theorem \ref{T1.1}. 

Let $\operatorname{Hess}_{\nabla^g}(h)=\nabla^g dh$ be the \emph{Hessian tensor} of a pseudo-Riemannian manifold $(N,g)$. 
We say that $(N,g,h)$ is a \emph{gradient Ricci soliton} if $\operatorname{Hess}_{\nabla^g}(h)+\rho_{\nabla^g}=\lambda g$ for some
$\lambda\in\mathbb{R}$. Gradient Ricci solitons are self-similar solutions of the Ricci flow and may be viewed as a natural 
generalization of Einstein metrics.
Four-dimensional half conformally flat gradient Ricci solitons are locally conformally flat in the Riemannian case \cite{chen-wang}. 
While all known examples of Bach flat gradient Ricci solitons in the Riemannian setting are locally conformally flat \cite{GRS1, GRS2}, 
there are non-trivial examples in the neutral signature case \cite{BVGR, CGGV}.

An important feature of the Bach flat examples in Theorem \ref{T1.1} is that they 
support gradient Ricci solitons which do not have any Riemannian counterpart. Let $\rho_s$
be the symmetric Ricci tensor of an affine surface (see Equation~(\ref{E1.a})).
The deformation tensor field $\phi$ is now  essential in the construction of gradient Ricci solitons 
as follows:

\begin{theorem}{\rm\cite{CGGV}}\label{T1.2}
Let $\mathcal{M}$ be a connected affine surface, 
let $0\ne T$ be a non-trivial parallel nilpotent tensor field, and let $f\in C^\infty(M)$. 
Then $(T^*M,g_{\nabla,\phi,T},h=\pi^*f)$ is a Bach-flat gradient Ricci soliton if and only if
$df(\ker(T))=0$ and
$$\phi(TX,TY)=-\{\operatorname{Hess}(f)+2\rho_{s}\}(X,Y)\text{ for all }X,Y\,.
$$
\end{theorem}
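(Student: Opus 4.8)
The plan is to reduce the statement to the gradient Ricci soliton equation and then analyze it block by block in the coordinates $(x^1,x^2,y_1,y_2)$. Since $T$ is nilpotent, Theorem~\ref{T1.1} already guarantees that $(T^*M,g_{\nabla,\phi,T})$ is Bach flat for every $\phi$ and every $f$, so it remains only to determine when $\operatorname{Hess}_{\nabla^g}(h)+\rho_{\nabla^g}=\lambda\,g_{\nabla,\phi,T}$ holds for $h=\pi^*f$. Writing the metric in block form relative to the splitting $(\partial_{x^i},\partial_{y_j})$, its inverse is again block triangular, so the Levi-Civita Christoffel symbols satisfy $\Gamma^{x^k}{}_{ab}=-\tfrac12\partial_{y_k}g_{ab}$. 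In particular $\Gamma^{x^k}{}_{x^iy_j}=\Gamma^{x^k}{}_{y_iy_j}=0$ and $\Gamma^{x^k}{}_{x^ix^j}=\Gamma_{ij}{}^k-\tfrac12(T^k{}_i\tau_j+T^k{}_j\tau_i)$, where I abbreviate $\tau_i:=T^s{}_i\,y_s$. The symbols $\Gamma^{y_k}{}_{ab}$ are obtained from the remaining entries of the inverse metric and carry the dependence on $\phi$.

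First I would compute the Hessian of $h=\pi^*f$. Because $f$ depends only on the base coordinates, $dh=(\partial_{x^i}f)\,dx^i$, and the only Christoffel symbols that contract against $dh$ are the $\Gamma^{x^k}{}_{ab}$ above. Hence $\operatorname{Hess}_{\nabla^g}(h)$ is supported on the $x$--$x$ block, where
\[
\operatorname{Hess}_{\nabla^g}(h)(\partial_{x^i},\partial_{x^j})=\operatorname{Hess}(f)_{ij}+\tfrac12(\theta_i\tau_j+\theta_j\tau_i),
\]
with $\operatorname{Hess}(f)$ the affine Hessian on $\mathcal{M}$ and $\theta_i:=T^k{}_i\,\partial_{x^k}f$. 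Since a nonzero nilpotent $T$ on a surface has $T^2=0$ and hence $\operatorname{image}T=\ker T$, the covector $\theta$ vanishes identically precisely when $df(\ker T)=0$.

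Next I would compute $\rho_{\nabla^g}$ and match the equation in the three blocks, expanding the $x$--$x$ block in powers of the fibre coordinate $y$. The $y$--$y$ block of $\rho_{\nabla^g}$ vanishes since the fibre distribution is parallel and null, so that block is automatic. The mixed block together with the $y$-quadratic part of the $x$--$x$ block forces $\lambda=0$: using $T^2=0$ the $y$-quadratic part of $\rho_{\nabla^g}(\partial_{x^i},\partial_{x^j})$ vanishes, while the corresponding part of $\lambda g$ equals $\lambda\,y_ry_sT^r{}_iT^s{}_j$, which is nontrivial for $T\neq0$. The heart of the argument is the $y$-independent and $y$-linear parts of the $x$--$x$ block. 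The crucial observation, to be read off from the Christoffel symbols, is that $\phi$ enters $\rho_{\nabla^g}(\partial_{x^i},\partial_{x^j})$ only through the single algebraic term $\phi_{k\ell}T^\ell{}_iT^k{}_j=\phi(TX,TY)_{ij}$ coming from $\partial_{y_k}\bigl(-\phi_{k\ell}\Gamma^{x^\ell}{}_{x^ix^j}\bigr)$; every term involving derivatives of $\phi$ drops out, because the only $\phi$-carrying symbols $\Gamma^{y_k}{}_{x^ix^j}$ are always paired in the curvature with the vanishing symbols $\Gamma^{x^m}{}_{x^jy_k}$, and $\partial_{y_k}$ annihilates the $y$-independent part of those symbols.

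Granting this, and using that $T$ is parallel ($\nabla T=0$) together with $T^2=0$ to see that the $f$-independent $y$-linear part of $\rho_{\nabla^g}(\partial_{x^i},\partial_{x^j})$ vanishes, the $y$-linear equation reduces to $\tfrac12(\theta_i\tau_j+\theta_j\tau_i)=0$ for all $y$, which by the rank-one nilpotency of $T$ is equivalent to $\theta=0$, that is, to $df(\ker T)=0$. The $y$-independent equation then reads $\operatorname{Hess}(f)_{ij}+2(\rho_s)_{ij}+\phi(TX,TY)_{ij}=0$, which is exactly the stated identity. For the converse I would substitute $\lambda=0$, $df(\ker T)=0$ and the prescribed $\phi(TX,TY)$ back into all three blocks and verify directly that each is satisfied. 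I expect the main obstacle to be the bookkeeping in the Ricci computation: confirming that the $y$-quadratic and $f$-independent $y$-linear parts of the base block vanish under $T^2=0$ and $\nabla T=0$, that the mixed block yields no obstruction beyond $\lambda=0$, and above all that the entire $\phi$-dependence of the base block collapses to the single algebraic term $\phi(TX,TY)$ with no surviving derivatives of $\phi$.
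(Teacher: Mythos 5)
The paper does not actually prove Theorem~\ref{T1.2}: it is imported verbatim from \cite{CGGV}, so there is no in-paper argument to compare yours against, and your proposal must be judged on its own merits against the direct computation carried out in that reference --- which it essentially reproduces, in valid form. Your structural claims all check out: the reduction via Theorem~\ref{T1.1}; the identity $\Gamma^{x^k}{}_{ab}=-\tfrac12\partial_{y_k}g_{ab}$, whence $\Gamma^{x^k}{}_{x^iy_j}=\Gamma^{x^k}{}_{y_iy_j}=0$ and the Hessian of $\pi^*f$ is supported on the base block with value $\operatorname{Hess}(f)_{ij}+\tfrac12(\theta_i\tau_j+\theta_j\tau_i)$; the equivalence $\theta=0\Leftrightarrow df(\ker T)=0$ via $\operatorname{image}(T)=\ker(T)$; the vanishing of the fibre--fibre and mixed blocks of the Ricci tensor (the mixed block alone forces $\lambda=0$, since $g(\partial_{x^i},\partial_{y_j})=\delta_i^j$); the vanishing of the $y$-quadratic part of the base block using $T^2=0$ and $\operatorname{Tr}\{T\}=0$; and, crucially, the collapse of the entire $\phi$-dependence to the single algebraic term $\phi(TX,TY)$ by exactly the mechanism you describe (the derivative-of-$\phi$ terms in $\Gamma^{y_k}{}_{x^ix^j}$ are $y$-independent, hence killed by $\partial_{y_k}$, while elsewhere those symbols pair with vanishing ones). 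When $\nabla T=0$ the $f$-independent $y$-linear part of the base block does cancel, leaving $\rho_{\nabla^g}(\partial_{x^i},\partial_{x^j})=2\rho_{s,ij}+\phi(TX,TY)_{ij}$ with no $y$-dependence, so the three graded pieces of the soliton equation yield precisely $\lambda=0$, $df(\ker T)=0$, and $\phi(TX,TY)=-\{\operatorname{Hess}(f)+2\rho_s\}(X,Y)$; the converse is automatic from the same decomposition. One practical remark: the bookkeeping you flag as the main obstacle shrinks dramatically if you first invoke Lemma~\ref{L2.1} and Lemma~\ref{L3.2} to put $T=\partial_{x^1}\otimes dx^2$ with Christoffel symbols satisfying Equation~(\ref{E3.b}); then $\tau_1=\theta_1=0$, only the $(2,2)$ base component carries content, $\phi$ enters only through $\phi_{11}$, and all of your claimed cancellations can be verified in a half page.
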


Quasi-Einstein metrics, although a generalization of gradient Ricci solitons, are
of interest in their own right. Conformally Einstein metrics and warped product Einstein metrics are special cases of quasi-Einstein metrics (see, for example, the discussion in \cite{BVGRGVR}). Let $(N,g)$ be a pseudo-Riemannian manifold. We say that $(N,g,h)$ is  \emph{quasi-Einstein} if $\operatorname{Hess}_{\nabla^g}(h)+\rho_{\nabla^g}-\mu\, dh\otimes dh=\lambda g$ for some
$\lambda,\mu\in\mathbb{R}$. As well as in the gradient Ricci soliton case, the Bach flat examples in Theorem \ref{T1.1} are quasi-Einstein for appropriate deformation tensor field $\phi$, which is now  essential.

\begin{theorem}{\rm\cite{BVGRGVR}}\label{T1.3}
Let $\mathcal{M}$ be a connected affine surface, let $T$ be a non-trivial parallel
nilpotent tensor field, and let $f\in C^\infty(M)$.
Then $(T^*M,g_{\nabla,\phi,T},h=\pi^*f)$ is a Bach-flat quasi-Einstein metric if and only if
$df(\ker(T))=0$ and
$$
\phi(TX,TY)=-\{\operatorname{Hes}(f)+2\rho_s-\mu\, df\otimes df\}(X,Y)
\text{ for all }X,Y\,.
$$
\end{theorem}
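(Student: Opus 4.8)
The plan is to build directly on Theorem~\ref{T1.2}, observing that the quasi-Einstein equation
$$\operatorname{Hess}_{\nabla^g}(h)+\rho_{\nabla^g}-\mu\,dh\otimes dh=\lambda\,g$$
is exactly the gradient Ricci soliton equation of that theorem augmented by the single extra term $-\mu\,dh\otimes dh$. Since $T$ is parallel and nilpotent, Theorem~\ref{T1.1} already guarantees that $(T^*M,g_{\nabla,\phi,T})$ is Bach flat for every deformation tensor $\phi$, so the entire statement reduces to analysing this equation. The decisive feature is that $h=\pi^*f$ gives $dh=\pi^*df=\partial_{x^i}f\,dx^i$, which has no $dy$-component; hence $dh\otimes dh=\partial_{x^i}f\,\partial_{x^j}f\,dx^i\otimes dx^j$ is supported entirely in the horizontal $dx^i\circ dx^j$ block and is constant along the cotangent fibres.

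First I would record the Levi-Civita connection of $g_{\nabla,\phi,T}$ in the canonical coordinates $(x^1,x^2,y_1,y_2)$. The inverse metric satisfies $g^{x^ix^j}=0$, $g^{x^iy_j}=\delta^i_j$ and $g^{y_iy_j}=-g_{x^ix^j}$, whence $\Gamma^{(g)\,x^k}_{x^ix^j}=\Gamma_{ij}{}^k-\tfrac12 y_s(T^k{}_iT^s{}_j+T^k{}_jT^s{}_i)$, while $\nabla^g_{\partial_{x^i}}\partial_{y_j}$ and $\nabla^g_{\partial_{y_i}}\partial_{y_j}$ carry no horizontal component. Consequently the vertical and mixed blocks of $\operatorname{Hess}_{\nabla^g}(h)$ vanish, whereas in the horizontal block
$$\operatorname{Hess}_{\nabla^g}(h)(\partial_{x^i},\partial_{x^j})=\operatorname{Hes}(f)(\partial_{x^i},\partial_{x^j})+\tfrac12 y_s\{df(T\partial_{x^i})\,T^s{}_j+df(T\partial_{x^j})\,T^s{}_i\}\,,$$
so the Hessian of $h$ splits into a fibre-constant part equal to the affine Hessian of $f$ together with a fibre-linear correction governed by $df\circ T$.

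The crucial point is that $g$, $\rho_{\nabla^g}$ and $\operatorname{Hess}_{\nabla^g}(h)$ are all independent of $\mu$, so the decomposition of $\operatorname{Hess}_{\nabla^g}(h)+\rho_{\nabla^g}-\lambda g$ into its vertical, mixed and horizontal blocks---and, within the horizontal block, into powers of $y$---is precisely the one used to prove Theorem~\ref{T1.2}, and the new term $-\mu\,dh\otimes dh$ perturbs only the fibre-constant part of the horizontal block. I would then run the block matching exactly as in Theorem~\ref{T1.2}: the mixed block forces $\lambda=0$, the fibre-linear horizontal block forces the correction above to vanish, i.e. $df(T\partial_{x^i})=0$ for all $i$---equivalently $df(\ker(T))=0$, since a rank-one nilpotent $T$ has $\operatorname{im}(T)=\ker(T)$---and the fibre-constant horizontal block yields $\phi(TX,TY)=-\{\operatorname{Hes}(f)+2\rho_s\}(X,Y)$. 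The term $-\mu\,dh\otimes dh$ leaves the first two conclusions untouched and, being fibre-constant and horizontal, modifies only the third, which becomes $\phi(TX,TY)=-\{\operatorname{Hes}(f)+2\rho_s-\mu\,df\otimes df\}(X,Y)$.

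The main obstacle is the fibre-constant horizontal block, where one must know how $\phi$ and the affine curvature enter the Ricci tensor of the modified extension: concretely, that $\phi$ appears only through the algebraic combination $\phi_{rs}T^r{}_iT^s{}_j=\phi(T\partial_{x^i},T\partial_{x^j})$---produced by pairing the quadratic metric term $y_ry_sT^r{}_iT^s{}_j$ against $\phi$ via $g^{y_iy_j}=-g_{x^ix^j}$---and that the remaining contribution is exactly $2\rho_s$, the fibre-quadratic part vanishing automatically for parallel $T$. This is already the content of Theorem~\ref{T1.2}; granting it, the proof is complete, since the fibre-constant term $-\mu\,df\otimes df$ is simply transported from the quasi-Einstein equation into this one block. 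In particular, the stated identity is forced for all $X,Y$, so it also encodes the compatibility constraint that $\{\operatorname{Hes}(f)+2\rho_s-\mu\,df\otimes df\}(X,Y)$ vanish whenever $X\in\ker(T)$, which is consistent because $df(\ker(T))=0$ makes the extra term $\mu\,df\otimes df$ vanish there as well.
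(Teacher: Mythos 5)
The paper offers no proof of Theorem~\ref{T1.3}: it is imported from \cite{BVGRGVR}, just as Theorem~\ref{T1.2} is imported from \cite{CGGV}, so there is no internal argument to compare yours against. That said, your proposal is correct, and it follows the route the cited sources themselves take: direct computation in the canonical Walker coordinates, then matching of blocks and of powers of $y$. The facts you derive are accurate --- the inverse metric identities, the horizontal Christoffel symbols $\Gamma_{ij}{}^k-\frac12 y_s(T^k{}_iT^s{}_j+T^k{}_jT^s{}_i)$, the resulting block form of $\operatorname{Hess}_{\nabla^g}(\pi^*f)$, and the fact that $\phi$ enters the Ricci tensor of $g_{\nabla,\phi,T}$ only through the fibre-constant combination $\phi(T\cdot,T\cdot)$ --- and the decisive observation that $-\mu\,dh\otimes dh=-\mu\,\pi^*(df\otimes df)$ is horizontal and fibre-constant correctly localizes the perturbation to the single equation that determines $\phi(TX,TY)$, leaving the conclusions $\lambda=0$ and $df(\ker(T))=0$ untouched. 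The one point worth flagging is that you invoke the proof-level structure of Theorem~\ref{T1.2} (vanishing of the mixed and vertical blocks of $\rho_{\nabla^g}$, vanishing of the fibre-linear and fibre-quadratic parts of its horizontal block when $T$ is parallel and nilpotent, and identification of the fibre-constant part as $2\rho_s+\phi(T\cdot,T\cdot)$) rather than only its statement; this is not a genuine gap, both because these are exactly the computations established in \cite{CGGV}, and because they can even be recovered from the bare statement of Theorem~\ref{T1.2} by specializing to $f\equiv 0$ and varying $\phi$, given your observation that $\rho_{\nabla^g}$ depends on $\phi$ affinely through $\phi(T\cdot,T\cdot)$.
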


Motivated by Theorems \ref{T1.1}, \ref{T1.2} and \ref{T1.3}, one is interested in the existence of affine surfaces admitting a nilpotent K\"ahler structure (i.e. a parallel
tensor of Type~(1,1) which is nilpotent) and their explicit description.

\subsection{Affine geometry}\label{S1.2}
Let $\mathcal{M}=(M,\nabla)$ be an affine surface. Here $M$ is a smooth connected
surface and $\nabla$ is a torsion free connection on the tangent bundle of $M$. We shall often suppose
$M$ is simply connected to avoid difficulties with holonomy when passing from local to global results.
The Ricci tensor $\rho$ is defined by setting $\rho(X,Y):=\operatorname{Tr}\{ Z\rightarrow R(Z,X)Y\}$. Since the Ricci tensor need not be
 symmetric in general, we introduce the symmetrization 
$\rho_s$ and skew-symmetrization $\rho_{sk}$ by setting:
\begin{equation}\label{E1.a}\begin{array}{l}
\rho_{s}(X,Y):=\textstyle\frac12\{\rho(X,Y)+\rho(Y,X)\}\\[0.05in]
\rho_{sk}(X,Y):=\textstyle\frac12\{\rho(X,Y)-\rho(Y,X)\}.
\end{array}\end{equation}
Let $(x^1,x^2)$ be a system of local coordinates on $M$. To simplify the notation, we let $\partial_{x^i}:=\frac\partial{\partial x^i}$.
 Expand $\nabla_{\partial_{x^i}}\partial_{x^j}=\Gamma_{ij}{}^k\partial_{x^k}$
to define the Christoffel symbols of $\nabla$; since $\nabla$ is torsion free, $\Gamma_{ij}{}^k=\Gamma_{ji}{}^k$. 
Let $T$ be a tensor of Type~(1,1). Expand $T=T^i{}_j\partial_{x^i}\otimes dx^j$.
The associated endomorphism is given by $T\{a^j\partial_{x^j}\}=a^jT^i{}_j\partial_{x^i}$. We say that $T$
is {\it parallel} if $\nabla T=0$. Let
$\mathcal{P}(\mathcal{M})$ be the set of parallel tensors of type~$(1,1)$ on $\mathcal{M}$:
$$
\mathcal{P}(\mathcal{M})=\{T^i{}_j:\partial_{x^k}T^i{}_j+\Gamma_{k\ell}{}^i T^\ell{}_j-\Gamma_{kj}{}^\ell T^i{}_\ell=0\,,\ \forall\ i,j,k\}\,.
$$
We will prove the following result in Section~\ref{S2}.
\begin{lemma}\label{L1.4} If $\mathcal{M}=(M,\nabla)$ is a connected affine surface, then
$\mathcal{P}(\mathcal{M})$ is a unital algebra with $\dim\{\mathcal{P}(\mathcal{M})\}\le4$.
Let $T\in\mathcal{P}(\mathcal{M})$. The eigenvalues of $T$ are constant on $M$. If
$T$ vanishes at any point of $M$, then $T$ vanishes identically.
\end{lemma}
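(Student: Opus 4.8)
The plan is to treat the three assertions in turn, establishing the algebra structure first, then deducing the dimension bound together with the vanishing statement from a single observation, and finally addressing the constancy of the eigenvalues. Throughout, the essential principle is that a parallel tensor is rigid: it is completely determined by its value at one point.

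First I would verify that $\mathcal{P}(\mathcal{M})$ is a unital algebra. The identity endomorphism is parallel, since $(\nabla_X\operatorname{Id})Y=\nabla_X(\operatorname{Id}Y)-\operatorname{Id}(\nabla_XY)=\nabla_XY-\nabla_XY=0$, so $\operatorname{Id}\in\mathcal{P}(\mathcal{M})$. Closure under sums and scalar multiples is immediate from the linearity of $\nabla$. For closure under composition I would invoke the Leibniz rule for the connection induced on $\operatorname{End}(TM)=TM\otimes T^*M$: if $\nabla S=0$ and $\nabla T=0$, then $\nabla(S\circ T)=(\nabla S)\circ T+S\circ(\nabla T)=0$. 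Associativity is inherited from composition of endomorphisms, so $\mathcal{P}(\mathcal{M})$ is a unital associative algebra.

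Next, for the dimension bound and the vanishing claim I would fix $p\in M$ and consider the evaluation map $\mathcal{P}(\mathcal{M})\to\operatorname{End}(T_pM)$ sending $T$ to its value $T_p$. If $T_p=0$, then along any curve $\gamma$ issuing from $p$ the restriction of $T$ solves the linear parallel-transport equation $\nabla_{\dot\gamma}T=0$ with zero initial data; by uniqueness for this ODE, $T$ vanishes along $\gamma$, and connectedness of $M$ forces $T\equiv0$. This proves the last assertion and simultaneously shows the evaluation map is injective, whence $\dim\{\mathcal{P}(\mathcal{M})\}\le\dim\operatorname{End}(T_pM)=4$.

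Finally, for the eigenvalues I would work with the coefficients of the characteristic polynomial rather than the eigenvalues directly, since the latter may fail to be real. Differentiating the trace $T^i{}_i$ via the parallel condition, the two Christoffel contributions cancel after relabelling the summation indices, so $\operatorname{Tr}(T)$ is locally constant, hence constant. Applying the identical computation to the parallel tensor $T^2$ shows $\operatorname{Tr}(T^2)$ is constant, and for a $2\times2$ matrix $\det(T)=\tfrac12\{(\operatorname{Tr}T)^2-\operatorname{Tr}(T^2)\}$, so the characteristic polynomial $\lambda^2-\operatorname{Tr}(T)\lambda+\det(T)$ has constant coefficients; its roots, the eigenvalues of $T$, are therefore constant on $M$. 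Equivalently, parallel transport conjugates $T_q$ to $T_p$ and so preserves the characteristic polynomial. The computations here are entirely routine; the only point demanding care is the local-to-global passage, where one must combine the uniqueness of solutions to the parallel-transport ODE with the connectedness of $M$ to promote the pointwise facts—vanishing at a point and constancy of the traces—to statements on all of $M$.
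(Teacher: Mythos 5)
Your proposal is correct and follows essentially the same route as the paper: the identity and Leibniz rule give the unital algebra structure, injectivity of evaluation at a point (via uniqueness of parallel transport on a connected manifold) gives both the vanishing statement and the bound $\dim\{\mathcal{P}(\mathcal{M})\}\le 4$, and constancy of $\operatorname{Tr}(T)$ and $\operatorname{Tr}(T^2)$ forces the eigenvalues to be constant. Your use of the characteristic polynomial is only a cosmetic variant of the paper's reduction to the trace-free case with eigenvalues $\pm\lambda$ and $\operatorname{Tr}(T^2)=2\lambda^2$.
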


Let $\operatorname{Tr}\{T\}:=T^i{}_i$ be the trace of the endomorphism. Let
$$
\mathcal{P}^0(\mathcal{M}):=\{T\in\mathcal{P}(\mathcal{M}):\operatorname{Tr}\{T\}=0\}
$$
be the space of trace free parallel tensors of Type~(1,1). If $T\in\mathcal{P}(\mathcal{M})$,
$\operatorname{Tr}\{T\}$ is constant and expressing 
$T=\frac12\operatorname{Tr}(T)\operatorname{id}+(T-\frac12\operatorname{Tr}(T)\operatorname{id})$ decomposes
$$
\mathcal{P}(\mathcal{M})=\operatorname{id}\cdot\mathbb{R}\oplus\mathcal{P}^0(\mathcal{M})\,.
$$
If $0\ne T\in\mathcal{P}^0(\mathcal{M})$, 
then the eigenvalues of $T$ are $\{\pm\lambda\}$ so $\operatorname{Tr}\{T^2\}=2\lambda^2$.
If $2\lambda^2<0$ (resp. $2\lambda^2>0$), we can rescale $T$ so $T^2=-\operatorname{id}$ (resp. $T^2=\operatorname{id}$) and $T$ defines a 
{\it K\"ahler} (resp. {\it para-K\"ahler}) structure on $M$; the almost complex 
(resp. almost para-complex) structure being integrable
as $M$ is a surface \cite{C04,NN57}. Finally, 
if $\lambda=0$, then $T$ is nilpotent and defines what we will call a {\it nilpotent K\"ahler structure}; such tensors appear in
the construction of Bach flat manifolds using the Riemannian extension by Theorem~\ref{T1.1}.
The symmetric Ricci tensor plays a crucial role.
We will establish the following result in Section~\ref{S3}.

\begin{theorem}\label{T1.5} Let $\mathcal{M}=(M,\nabla)$ be a simply connected affine surface.
\begin{enumerate}
\item If $\dim\{\mathcal{P}^0(\mathcal{M})\}=1$, then exactly one of the following possibilities holds:
\begin{enumerate}
\item $\mathcal{M}$ admits a K\"ahler structure and $\operatorname{Rank}\{\rho_s\}=2$.
\item $\mathcal{M}$ admits a para-K\"ahler structure and $\operatorname{Rank}\{\rho_s\}=2$.
\item $\mathcal{M}$ admits a nilpotent K\"ahler structure and $\operatorname{Rank}\{\rho_s\}=1$.
\end{enumerate}
\item $\dim\{\mathcal{P}^0(\mathcal{M})\}\ne2$.
\item $\dim\{\mathcal{P}^0(\mathcal{M})\}=3$ if and only if $\rho_s=0$. This implies $\mathcal{M}$ admits K\"ahler, para-K\"ahler, and
nilpotent K\"ahler structures.
\end{enumerate}
\end{theorem}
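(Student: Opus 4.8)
The plan is to work locally at a point $p\in M$ and translate the parallel condition into a linear-algebra problem on the fiber $T_pM$, then use the holonomy/simply-connected hypothesis to globalize. Since $T\in\mathcal{P}^0(\mathcal{M})$ is parallel and trace free, Lemma~\ref{L1.4} tells me its eigenvalues are constant, so I can organize the argument by the algebraic type of $T$ (diagonalizable with eigenvalues $\pm\lambda$, or nilpotent). The key structural input I would lean on is a relation between a parallel tensor $T$ and the curvature: differentiating $\nabla T=0$ and using the Ricci identity gives $[R(X,Y),T]=0$ for all $X,Y$, so $T$ commutes with the full curvature operator and hence, after contracting, with the Ricci endomorphism $\operatorname{Ric}$ associated to $\rho_s$. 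This commutation is the engine that forces the rank constraints on $\rho_s$ in each case.

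First I would handle part (1). Given $\dim\{\mathcal{P}^0(\mathcal{M})\}=1$, fix a generator $T$. By the trichotomy already set up in the excerpt (after rescaling, $T^2=-\operatorname{id}$, $T^2=+\operatorname{id}$, or $T^2=0$), I get the three subcases (a),(b),(c). For the K\"ahler and para-K\"ahler cases, $T$ is invertible and $\operatorname{Ric}$ commutes with $T$; I would argue that $\operatorname{Ric}$ cannot be degenerate, for otherwise its kernel would be a $T$-invariant line, and using $T$ I could manufacture a \emph{second} independent parallel trace-free endomorphism (e.g. build one from $\operatorname{Ric}$ and $T$ via the algebra structure of $\mathcal{P}(\mathcal{M})$), contradicting $\dim=1$; hence $\operatorname{Rank}\{\rho_s\}=2$. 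For the nilpotent case, $\ker T$ is a distinguished parallel line and I would show $\operatorname{Ric}$ must have rank exactly $1$: rank $0$ would force $\rho_s=0$, which by part (3) gives $\dim=3$, again contradicting $\dim=1$, while rank $2$ would make $\operatorname{Ric}$ invertible and, combined with the commutation $[\operatorname{Ric},T]=0$, would be incompatible with $T$ nilpotent nonzero.

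For part (2), the claim $\dim\{\mathcal{P}^0(\mathcal{M})\}\neq 2$ is a purely algebraic fact about the $2{\times}2$ trace-free matrices that can simultaneously lie in the commutant-type algebra $\mathcal{P}(\mathcal{M})$: the trace-free parallel endomorphisms sit inside an associative unital algebra of dimension $\le 4$ by Lemma~\ref{L1.4}, and I would show that a $2$-dimensional space of trace-free elements closed under the relevant products forces a third independent element (the normalized product of two generators lands back in $\mathcal{P}^0$), so the dimension jumps to $3$; thus $2$ is impossible. For part (3), the ``if'' direction is the cleanest: if $\rho_s=0$ then the curvature constraints vanish and I would exhibit three explicit independent parallel trace-free tensors (essentially a $\mathfrak{sl}_2$-type triple $\{e,f,h\}$ adapted to a parallel frame on the simply connected $M$), giving $\dim\ge 3$, hence $=3$ by part (2) and the bound $\le 4$ together with $\operatorname{id}\notin\mathcal{P}^0$. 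The ``only if'' direction follows by contraposition from part (1): if $\rho_s\neq 0$ its rank is $1$ or $2$, each of which (by the commutation argument) caps $\dim\{\mathcal{P}^0\}$ at $1$.

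The main obstacle I anticipate is making the commutation argument $[\operatorname{Ric},T]=0$ do enough work to pin down the rank \emph{exactly}, rather than merely bounding it; in particular, ruling out the degenerate-but-nonzero configurations cleanly, and verifying that the algebra-theoretic dimension count in part (2) is airtight (that a $2$-dimensional trace-free subspace genuinely generates a third element inside $\mathcal{P}^0(\mathcal{M})$ and is not some exceptional closed subspace). I would therefore want to set up the algebra structure of $\mathcal{P}(\mathcal{M})$ concretely first, since parts (2) and (3) are driven almost entirely by that classification.
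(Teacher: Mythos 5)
Your proposal has genuine gaps at exactly the places where the theorem requires real work, and the obstacle you yourself flagged in part (2) is fatal. The claim that $\dim\{\mathcal{P}^0(\mathcal{M})\}=2$ is impossible is \emph{not} a purely algebraic fact about unital subalgebras of $M_2(\mathbb{R})$: the upper triangular matrices form a $3$-dimensional unital subalgebra whose trace-free part is the $2$-dimensional space spanned by $S=\left(\begin{smallmatrix}1&0\\0&-1\end{smallmatrix}\right)$ and $T=\left(\begin{smallmatrix}0&1\\0&0\end{smallmatrix}\right)$; since $ST=T$ and $TS=-T$, products never generate a third independent trace-free element. So this is precisely the ``exceptional closed subspace'' you worried about, and it kills the purely algebraic route. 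The exclusion of dimension $2$ is differential-geometric: in Lemma~\ref{L3.2} the paper takes a nilpotent $T$ and an independent $S\in\mathcal{P}^0(\mathcal{M})$ (which, at a point, span exactly the upper-triangular configuration above), and shows that the \emph{parallelism} equation $\nabla S=0$ forces the extra Christoffel relations of Equation~(\ref{E3.c}); those relations then produce a full $3$-dimensional $\mathcal{P}^0(\mathcal{M})$ with $\rho_s=0$. Without this step, part (2) is unproven, and with it the ``only if'' half of part (3) and the exactness of the ranks in part (1).

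The engine you propose also needs repair. On an affine surface there is no metric, so ``the Ricci endomorphism associated to $\rho_s$'' is not defined, and contracting $[R(X,Y),T]=0$ does not yield a commutation relation: using the two-dimensional identity $R(X,Y)Z=\rho(Y,Z)X-\rho(X,Z)Y$, what one actually obtains for trace-free parallel $T$ is the anti-commutation-type identity $\rho(TX,Y)+\rho(X,TY)=0$. (This corrected identity does reproduce invariantly the conclusions of Lemmas~\ref{L3.2}, \ref{EGR-L3.6} and \ref{EGR-L3.7}: it forces $\rho_s$ to have rank $\le 1$ in the nilpotent case and rank $\in\{0,2\}$ in the K\"ahler and para-K\"ahler cases.) Moreover, $\rho_s$ is not parallel, so your plan to ``manufacture a second parallel tensor from $\operatorname{Ric}$ and $T$ via the algebra structure'' produces tensors that need not lie in $\mathcal{P}(\mathcal{M})$ at all; the degenerate cases must instead be excluded by showing they force $\rho_s=0$ and invoking part (3). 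Finally, your proof of the ``if'' direction of part (3) is broken: $\rho_s=0$ does not imply flatness, since Lemma~\ref{EGR-L3.1} gives $\rho=-\partial_{x^1}\partial_{x^1}\varphi\,dx^1\wedge dx^2\ne 0$ in general, so no parallel frame of $TM$ exists and an $\mathfrak{sl}_2$-triple of constant matrices is not parallel; the paper's three parallel tensors depend essentially on the potential $\varphi$. A correct invariant substitute would be to note that $\rho_s=0$ forces every curvature operator $R(X,Y)$ to be a scalar multiple of the identity, so the induced connection on $\operatorname{End}(TM)$ is flat and simple connectivity yields $\dim\{\mathcal{P}(\mathcal{M})\}=4$ --- but that argument, not a parallel frame of $TM$, is what is needed.
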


Generically, of course, $\dim\{\mathcal{P}^0(\mathcal{M})\}=0$. 
Furthermore, there exist examples with $\operatorname{Rank}\{\rho_s\}=1$
(resp. $\operatorname{Rank}\{\rho_s\}=2$) where $\dim\{\mathcal{P}^0(\mathcal{M})\}=0$ 
as we shall show in Remark~\ref{R5.2}
 (resp. Remark~\ref{R1.10}). 
What is somewhat surprising is that the existence of parallel $(1,1)$ tensor fields is
 completely characterized by the geometry of the symmetric part of the Ricci tensor $\rho_s$.

Recall that a tensor field $\mathcal{T}$ on an affine manifold $\mathcal{M}$ is said to be \emph{recurrent} 
if $\nabla\mathcal{T}=\omega\otimes\mathcal{T}$ for some recurrence $1$-form $\omega$.
Let $\mathcal{M}$ be an affine surface where the skew-symmetric Ricci tensor $\rho_{sk}\neq 0$. Then
$\rho_{sk}$ defines a volume element. Furthermore, $\rho_{sk}$ is recurrent, i.e.
$\nabla\rho_{sk}=\omega\otimes\rho_{sk}$. The symmetric Ricci tensor is not recurrent in general .
We will also prove the following result in Section~\ref{S3}.

\begin{theorem}\label{T1.5b-EGR} 
Let $\mathcal{M}=(M,\nabla)$ be a simply connected affine surface with $\rho_s\neq 0$.
\begin{enumerate}
\item $\mathcal{M}$ admits a K\"ahler structure if and only if $\det\{\rho_s\}>0$ and $\rho_s$ is recurrent.
\item $\mathcal{M}$ admits a para-K\"ahler structure if and only if $\det\{\rho_s\}<0$ and $\rho_s$ is recurrent.
\item $\mathcal{M}$ admits a nilpotent K\"ahler structure if and only if $\rho_s$ is of rank one and recurrent.
\end{enumerate}
\end{theorem}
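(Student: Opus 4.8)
The plan is to reduce all three equivalences to a single algebraic identity relating a trace-free parallel tensor $T$ to $\rho_s$. On a surface the curvature operator has the form $R(X,Y)=\omega_0(X,Y)\,S$ for a (local) area $2$-form $\omega_0$ and an endomorphism field $S$, and a short trace computation gives $\rho(X,Y)=\omega_0(SY,X)$. Since $T$ is parallel it commutes with every $R(X,Y)$, hence $ST=TS$. Combining this with the two-dimensional identities $\omega_0(TU,TV)=\det(T)\,\omega_0(U,V)$ and $\omega_0(TU,V)+\omega_0(U,TV)=\operatorname{Tr}(T)\,\omega_0(U,V)$, I would derive for trace-free $T$ the skew-adjointness relation
$$\rho_s(TX,Y)+\rho_s(X,TY)=0.$$
I expect this identity to carry the bulk of the argument.

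For the forward implications, fix a point and put $T$ into normal form according to its type; by Lemma~\ref{L1.4} the type is constant and $T$ never vanishes. In an adapted basis the relation above cuts $S^2T_p^*M$ down to the line $\mathcal{L}_p:=\{\beta:\beta(TX,Y)+\beta(X,TY)=0\}$, and a direct check shows $\dim\mathcal{L}_p=1$ in each case, with a nonzero element of $\mathcal{L}_p$ being definite when $T^2=-\operatorname{id}$ (so $\det>0$), split when $T^2=\operatorname{id}$ (so $\det<0$), and of rank one with radical $\ker T$ when $T$ is nilpotent. As $\rho_s\in\mathcal{L}$ and $\rho_s\neq0$, this yields the asserted sign of $\det\{\rho_s\}$ in the first two cases and $\operatorname{Rank}\{\rho_s\}=1$ in the third. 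Recurrence then follows formally: $\mathcal{L}$ is a $\nabla$-parallel line subbundle of $S^2T^*M$ (its defining relation is preserved under parallel transport because $T$ is), so $\nabla\rho_s$ is again a section of $\mathcal{L}$; at points where $\rho_s\neq0$ this reads $\nabla\rho_s=\omega\otimes\rho_s$.

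For the converses in (1) and (2), I would take $g:=\rho_s$, which is now nondegenerate, together with a volume form $\Omega$ (available since a simply connected surface is orientable), and define $T$ by $g(TX,Y)=\Omega(X,Y)$. Then $T$ is $g$-skew, hence trace-free, and $T^2$ is a negative (resp.\ positive) multiple of the identity according as $g$ is definite (resp.\ indefinite); rescaling $\Omega$ arranges $T^2=-\operatorname{id}$ (resp.\ $T^2=\operatorname{id}$). Differentiating $g(TX,Y)=\Omega(X,Y)$ and using that $g$ is recurrent and that a top-degree form on a surface is automatically recurrent, I would conclude that $T$ is recurrent, $\nabla T=\theta\otimes T$. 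The decisive point is that $T^2=\pm\operatorname{id}$ is parallel: applying $\nabla$ to $T^2$ gives $2\,\theta\otimes T^2=0$, forcing $\theta=0$, so $T$ is parallel and defines the desired K\"ahler (resp.\ para-K\"ahler) structure.

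The nilpotent converse is where I expect the main difficulty. Here $\rho_s$ has rank one and is recurrent, which forces $\ell:=\operatorname{rad}(\rho_s)$ to be a parallel line field and lets me write $\rho_s=\pm\beta\odot\beta$ with $\beta$ recurrent and $\ker\beta=\ell$; choosing a recurrent section $U$ of $\ell$, the endomorphism $T:=U\otimes\beta$ is trace-free, nilpotent with $\operatorname{Im}T=\ker T=\ell$, and recurrent, $\nabla T=\psi\otimes T$. Since now $T^2=0$, the upgrade trick of the previous paragraph is unavailable; instead I must rescale $T\mapsto fT$ to kill $\psi$, which succeeds exactly when $\psi$ is exact, equivalently (as $M$ is simply connected) when $d\psi=0$. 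The obstacle is precisely this closedness. I would resolve it by a curvature computation: writing $\nabla U=\sigma\otimes U$ and $\nabla\beta=\tfrac12\omega\otimes\beta$, the recurrences give $R(X,Y)U=d\sigma(X,Y)\,U$ and a dual identity for $\beta$, so that $d\sigma$ and $d\omega$ are multiples of $\omega_0$ by the respective eigenvalues of $S$ on $U$ and on $\beta$; the radical condition $\rho_s(U,\cdot)=0$ identifies $\beta$ with $\omega_0(\cdot,U)$ up to scale and hence forces these eigenvalues to agree, giving $d\psi=0$. Rescaling then produces a parallel nilpotent $T$, completing the proof.
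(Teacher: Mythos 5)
Your proposal is correct, but it follows a genuinely different route from the paper's proof. The paper works throughout in adapted coordinates: for the forward implications it takes the normal forms of Lemma~\ref{L3.2}, Lemma~\ref{EGR-L3.6} and Lemma~\ref{EGR-L3.7}, computes $\rho_s$ there and exhibits the recurrence $1$-form explicitly; for the converses it invokes Wong's normal-form theorems for recurrent tensors \cite{W64} to put $\rho_s$ into a canonical coordinate shape, translates recurrence into the Christoffel relations of Equations~(\ref{E3.b}), (\ref{E3.d}), (\ref{E3.f}), and in the nilpotent case performs a further coordinate normalization of $\Gamma_{11}{}^1$ and writes the parallel tensor by an explicit integral formula for $T^1{}_2$. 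You instead argue invariantly: the identity $\rho_s(TX,Y)+\rho_s(X,TY)=0$ (valid because a parallel $T$ commutes with $R(X,Y)=\omega_0(X,Y)S$ and is trace free) settles the forward directions by pointwise linear algebra, with recurrence coming from the fact that $\{\beta:\beta(T\cdot,\cdot)+\beta(\cdot,T\cdot)=0\}$ is a parallel line subbundle of $S^2T^*M$; for the converses of (1) and (2) you build $T$ from $g=\rho_s$ and a volume form via $g(TX,Y)=\Omega(X,Y)$, using the automatic recurrence of $\Omega$ and the normalization $T^2=\mp\operatorname{id}$ to kill the recurrence form; for (3) you build a recurrent nilpotent $T=U\otimes\beta$ and must show its recurrence form $\psi=\sigma+\frac12\omega$ is closed. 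I checked this last, decisive step and it does close up: with $R=\omega_0\otimes S$ one gets $SU=\lambda U$, $d\sigma=\lambda\,\omega_0$, $S^t\beta=c\,\beta$, $\frac12 d\omega=-c\,\omega_0$; the two-dimensional identity $\omega_0(SV,U)+\omega_0(V,SU)=\operatorname{Tr}(S)\,\omega_0(V,U)$ together with $\beta\propto\omega_0(\cdot,U)$ gives $c=\operatorname{Tr}(S)-\lambda$, while the radical condition $\rho_s(U,\cdot)=0$ forces $\operatorname{Tr}(S)=2\lambda$, whence $c=\lambda$ and $d\psi=0$. (Your one-sentence summary compresses two separate facts here -- proportionality of $\beta$ and $\omega_0(\cdot,U)$ follows merely from equality of kernels, and it is the radical condition that supplies $\operatorname{Tr}(S)=2\lambda$ -- but the pieces combine exactly as you claim.) As for what each approach buys: yours is coordinate free, independent of \cite{W64}, and explains conceptually why recurrence of $\rho_s$ is precisely the right hypothesis; the paper's approach produces explicit local normal forms for the connection and the parallel tensor, which are then reused in the Type~$\mathcal{A}$ and Type~$\mathcal{B}$ classifications of Sections~\ref{S4} and \ref{S5}. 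Both proofs share the same mild imprecision of treating $\rho_s$ as nowhere vanishing, so that the pointwise assertions on $\det\{\rho_s\}$ and rank, and the recurrence $1$-forms, make sense.
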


\begin{remark}\rm
Affine surfaces with $\mathcal{P}^0(\mathcal{M})\neq 0$ have appeared in the literature in
several contexts. For instance, affine surfaces with parallel shape operator have been investigated in
\cite{Jelonek}, where it is shown that any such surface is either an equiaffine sphere or the shape
operator is nilpotent, thus corresponding to Assertion (1.c) in Theorem \ref{T1.5}.
		
Let $\mathcal{M}$ be an affine surface equipped with a parallel volume form $\Omega$. 
Since $d\Omega=0$ and $\nabla\Omega=0$, $\mathcal{M}$ is a Fedosov manifold \cite{GRS}
and there is a notion of symplectic sectional curvature (see \cite{Fox,GRS}). 
A symplectic surface $(M,\nabla,\Omega)$ has zero symplectic sectional curvature if and only if the 
$\Omega$-Ricci operator $\Omega(\operatorname{Ric}^\Omega(X),Y)=\rho(X,Y)$ is a nilpotent 
K\"ahler structure. Moreover the symplectic sectional curvature is positive definite 
(resp., negative definite) if and only if $\operatorname{Ric}^\Omega$ is a 
K\"ahler (resp., para-K\"ahler) structure \cite{Fox}.
		
Moreover, since a symplectic surface has constant symplectic sectional curvature if and only 
if the Ricci tensor is parallel \cite{Fox}, it must be locally symmetric and thus locally 
homogeneous \cite{DGP18}. Hence
the cases of non zero constant symplectic curvature correspond to affine structure defined by the Levi-Civita connections 
of the sphere, the hyperbolic plane and the Lorentzian hyperbolic plane.
The case of zero symplectic sectional curvature corresponds to the Type $\mathcal{A}$ homogeneous surfaces given in the notation of \cite{BGG18} by
		$$
		\begin{array}{ll}
		\mathcal{M}_2^{-\frac{1}{2}}: & 
		\Gamma_{11}{}^1=-1, 
		\Gamma_{11}{}^2=0, 
		\Gamma_{12}{}^1=-\frac{1}{2}, 
		\Gamma_{12}{}^2=0, 
		\Gamma_{22}{}^1=0,
		\Gamma_{22}{}^2=0.
		\\
		\noalign{\medskip}
		\mathcal{M}_5^0: &
		\Gamma_{11}{}^1=-1, 
		\Gamma_{11}{}^2=0, 
		\Gamma_{12}{}^1=c,
		\Gamma_{12}{}^2=0,
		\Gamma_{22}{}^1=-1, 
		\Gamma_{22}{}^2=2c.
		\end{array}
		$$
	\end{remark}

In the proof of Theorem~\ref{T1.5} and Theorem~\ref{T1.5b-EGR}, we will give a complete 
local description of the setting where $\dim\{\mathcal{P}^0(\mathcal{M})\}=1$; this
naturally decomposes into 3 cases where $\mathcal{M}$ admits a K\"ahler structure, a para-K\"ahler structure, 
or a nilpotent K\"ahler structure.
We will give a complete local description of these 3 settings in Section~\ref{S3}. When examining the case where 
$\dim\{\mathcal{P}(\mathcal{M})\}=4$,
we will give a complete local description of the setting where $\rho_s=0$.
Although these results will provide a general solution to the problem of finding parallel tensors of type~$(1,1)$ on an affine surface,
it is of interest to find homogeneous solutions; this will be done presently but does not follow directly from this result owing to the difficulty
of determining when such a structure is homogeneous.

\subsection{Homogeneous affine geometries} 
We say that $\mathcal{M}$ is
locally homogeneous if given any two points $P$ and $Q$ of $M$, there exists a local diffeomorphism $\Psi$
from a neighborhood of $P$ to a neighborhood of $Q$ so $\Psi^*\nabla=\nabla$. 
The following result was first proved by Opozda \cite{Op04} in the torsion free
setting and subsequently extended by Arias-Marco and Kowalski \cite{AMK08} to 
surfaces with torsion. It is fundamental in the subject.

\begin{theorem}
Let $\mathcal{M}=(M,\nabla)$ be a locally homogeneous affine surface which is not flat. Then at least one of the following
three possibilities holds which describe the local geometry:
\begin{itemize}
\item[($\mathcal{A}$)] There exists a coordinate atlas so the Christoffel symbols
$\Gamma_{ij}{}^k$ are constant.
\item[($\mathcal{B}$)] There exists a coordinate atlas so the Christoffel symbols have the form
$\Gamma_{ij}{}^k=(x^1)^{-1}C_{ij}{}^k$ for $C_{ij}{}^k$ constant and $x^1>0$.
\item[($\mathcal{C}$)] $\nabla$ is the Levi-Civita connection of a metric of constant Gauss
curvature.
\end{itemize}\end{theorem}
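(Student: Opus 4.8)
The plan is to reduce the classification to the structure theory of the Lie algebra of infinitesimal affine symmetries. Let $\mathfrak{K}$ denote the Lie algebra of germs of affine Killing vector fields of $\mathcal{M}$, i.e.\ vector fields $X$ with $\mathcal{L}_X\nabla=0$. Local homogeneity is equivalent to the surjectivity of the evaluation map $\mathfrak{K}\to T_pM$, $X\mapsto X_p$, for every $p$, so $\dim\mathfrak{K}\ge 2$. The isotropy subalgebra $\mathfrak{K}_p=\{X\in\mathfrak{K}:X_p=0\}$ injects into $\mathfrak{gl}(T_pM)\cong\mathfrak{gl}(2,\mathbb{R})$ by linearization $X\mapsto(\nabla X)_p$; injectivity holds because an affine transformation fixing $p$ with trivial differential at $p$ is the identity (compare the geodesics emanating from $p$). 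Hence $\dim\mathfrak{K}_p\le 4$ and $\dim\mathfrak{K}=2+\dim\mathfrak{K}_p\le 6$, the value $6$ occurring only in the flat case, which is excluded.

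First I would treat the simply transitive case $\dim\mathfrak{K}=2$ (trivial isotropy). Then $\mathcal{M}$ is locally a $2$-dimensional Lie group $G$ carrying a left-invariant connection, and there are exactly two isomorphism types of $2$-dimensional Lie algebras. If $\mathfrak{g}$ is abelian, a basis of commuting left-invariant fields can be taken as coordinate fields $\partial_{x^1},\partial_{x^2}$, and left-invariance of $\nabla$ forces the $\Gamma_{ij}{}^k$ to be constant, which is Type $(\mathcal{A})$. If $\mathfrak{g}$ is the non-abelian algebra with $[e_1,e_2]=e_1$, realizing $G$ as the $ax+b$ group and choosing coordinates adapted to the left-invariant frame produces Christoffel symbols of the form $(x^1)^{-1}C_{ij}{}^k$ with $C_{ij}{}^k$ constant, which is Type $(\mathcal{B})$.

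Next I would analyze the case of positive-dimensional isotropy, $\dim\mathfrak{K}_p\ge 1$. Here the curvature $R$, the Ricci tensor $\rho$, and hence $\rho_s$ and $\rho_{sk}$ at $p$ are invariant under the image $\mathfrak{h}\subset\mathfrak{gl}(2,\mathbb{R})$ of $\mathfrak{K}_p$. I would run through the conjugacy classes of one-parameter subalgebras of $\mathfrak{gl}(2,\mathbb{R})$ (scalar, semisimple with distinct real eigenvalues, hyperbolic, rotation, and nilpotent shear) and determine the space of invariant symmetric bilinear forms in each case. When the isotropy forces a nondegenerate invariant symmetric tensor, one obtains after normalization a pseudo-Riemannian metric whose Levi-Civita connection is $\nabla$; local homogeneity then makes its Gauss curvature constant, giving Type $(\mathcal{C})$. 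When instead the invariant form is degenerate or absent, I would exhibit an abelian or $ax+b$ transitive subgroup inside the symmetry group and fall back to Type $(\mathcal{A})$ or $(\mathcal{B})$ by the argument of the previous paragraph.

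The main obstacle I anticipate is precisely this isotropy analysis: showing that $1$-dimensional isotropy cannot produce an exotic connection outside the three families. The delicate point is that an invariant nondegenerate symmetric form need not a priori be $\nabla$-parallel, so one must use the invariance of $\nabla\rho_s$ (or the higher Singer-type data $R,\nabla R,\nabla^2 R,\dots$) to verify that the candidate metric really is the Levi-Civita metric of $\nabla$, and to dispatch the borderline nilpotent-shear isotropy as a degeneration already covered by Types $(\mathcal{A})$ and $(\mathcal{B})$. Invoking the affine analogue of Singer's homogeneity theorem, which bounds the order at which the covariant derivatives of the curvature stabilize, is what closes these remaining cases.
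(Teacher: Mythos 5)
First, a point of order: the paper does not prove this statement at all --- it is quoted from Opozda \cite{Op04} and Arias-Marco--Kowalski \cite{AMK08} as a known classification --- so there is no internal proof to compare against, and your proposal must stand on its own as a proof of the classification theorem. Your skeleton (pass to the Lie algebra $\mathfrak{K}$ of affine Killing fields, split according to the isotropy dimension, and handle the simply transitive case via the two $2$-dimensional Lie algebras) does match the group-theoretic approach in the literature, and the simply transitive half is essentially right: commuting transitive Killing fields straighten to $\partial_{x^1},\partial_{x^2}$, giving constant Christoffel symbols (Type $\mathcal{A}$), while the non-abelian algebra can be realized as $\partial_{x^2}$ and $x^1\partial_{x^1}+x^2\partial_{x^2}$, giving $\Gamma_{ij}{}^k=(x^1)^{-1}C_{ij}{}^k$ (Type $\mathcal{B}$).

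There are, however, two genuine gaps. (i) Your opening claim that local homogeneity is \emph{equivalent} to surjectivity of the evaluation map $\mathfrak{K}\to T_pM$ is not automatic in the smooth category: a local affine diffeomorphism carrying $P$ to $Q$ need not arise from the flow of any Killing field. This implication is exactly the content of Opozda's affine analogue of Singer's theorem, which must be invoked at the outset; you appeal to Singer-type results only at the very end, and for a different purpose. (ii) More seriously, the positive-dimensional isotropy case --- where virtually all the work of \cite{Op04} and \cite{AMK08} lies --- rests in your sketch on a step that is false as stated: an isotropy-invariant nondegenerate symmetric $2$-tensor propagates to an invariant metric $g$, but invariance of $\nabla$ under the same symmetry algebra does not make $\nabla$ the Levi-Civita connection of $g$. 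You flag this difficulty honestly, but "use the invariance of $\nabla\rho_s$ and the Singer data" is not an argument. What actually closes this case is Nomizu--Wang theory of invariant connections: for semisimple isotropy ($\mathfrak{so}(2)$ or $\mathfrak{so}(1,1)$) the pair is reductive and the space of isotropy-equivariant bilinear maps $\mathfrak{m}\otimes\mathfrak{m}\to\mathfrak{m}$ is zero, which \emph{forces} $\nabla$ to be the canonical, hence Levi-Civita, connection; for scalar or nilpotent (shear) isotropy one must classify the possible transitive pairs $(\mathfrak{g},\mathfrak{h})$ in dimensions $3$ and $4$ and exhibit in each a $2$-dimensional transitive subalgebra so as to fall back on Types $\mathcal{A}$ and $\mathcal{B}$. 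Your dichotomy moreover needs the assertion that whenever no invariant nondegenerate form exists, a $2$-dimensional transitive subalgebra does exist; this is not obvious and requires that classification (note that $\mathfrak{so}(3)$ contains no $2$-dimensional subalgebra whatsoever, which is precisely why the round sphere occurs only under Type $\mathcal{C}$). As it stands, the proposal is a plausible roadmap whose central case analysis --- the actual substance of the theorem --- is missing.
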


\begin{remark}\label{R1.8}\rm These classes are not exclusive. There are no surfaces which are both of Type~$\mathcal{A}$
and Type~$\mathcal{C}$. The only surfaces which are of both Type~$\mathcal{B}$ and Type~$\mathcal{C}$ are
the hyperbolic plane $ds^2=\{(dx^1)^2+(dx^2)^2\}/(x^1)^2$ and the Lorentzian analogue $ds^2=\{(dx^1)^2-(dx^2)^2\}/(x^1)^2$.
A Type~$\mathcal{B}$ affine surface is also of Type~$\mathcal{A}$ if and only if 
$\Gamma_{12}{}^1=\Gamma_{22}{}^1=\Gamma_{22}{}^2=0$ (see \cite{BGG18}).
\end{remark}

We shall classify the Type~$\mathcal{A}$ and Type~$\mathcal{B}$ surfaces with
$\mathcal{P}^0(\mathcal{M})\ne\{0\}$. To avoid difficulties with holonomy (i.e. with the fundamental group),
we will assume henceforth that $M=\mathbb{R}^2$ in the Type~$\mathcal{A}$ setting and that
$M=\mathbb{R}^+\times\mathbb{R}$ in the Type~$\mathcal{B}$ setting. We shall be interested in geometries
which are not flat. Since we are in the 2-dimensional setting, this
is equivalent to imposing the condition that $\rho\ne0$. 

\subsection{Type~$\mathcal{A}$ geometries} We say that two Type~$\mathcal{A}$ structures on $\mathbb{R}^2$
are {\it linearly equivalent} if there exists an element $\Theta\in\operatorname{GL}(2,\mathbb{R})$
which intertwines the two structures. We will prove the following result in Section~\ref{S4}.

\begin{theorem}\label{T1.9}
Let $\mathcal{M}=(\mathbb{R}^2,\nabla)$ be a Type~$\mathcal{A}$ structure which is not flat.  Then
$\mathcal{P}^0(\mathcal{M})\ne\{0\}$ if and only if the Ricci tensor is of rank one. 
Furthermore, $\mathcal{M}$ is linearly equivalent to a structure where $\Gamma_{11}{}^2=0$ and $\Gamma_{12}{}^2=0$, and $\mathcal{P}^0(\mathcal{M})=T\cdot\mathbb{R}$, where $T=e^{-\Gamma_{11}{}^1x^1+(\Gamma_{22}{}^2-\Gamma_{12}{}^1)x^2}\partial_{x^1}\otimes dx^2$. 
\end{theorem}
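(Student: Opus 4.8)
The plan is to reformulate the parallel condition as a first-order linear system with constant coefficients and to extract the curvature as its integrability obstruction. Writing $(A_k)^i{}_j:=\Gamma_{kj}{}^i$ for the (constant) connection matrices, the condition $T\in\mathcal{P}(\mathcal{M})$ becomes $\partial_k T=[T,A_k]$ for $k=1,2$, with $T$ viewed as a matrix. Differentiating and applying the Jacobi identity, equality of the mixed partials $\partial_1\partial_2T=\partial_2\partial_1T$ forces $[T,R_{12}]=0$ at every point, where $R_{12}:=[A_1,A_2]$ is the constant curvature endomorphism $R(\partial_{x^1},\partial_{x^2})$. First I would record three facts about $R_{12}$: it is trace free (being a commutator), which forces the skew-symmetric Ricci tensor to vanish, so $\rho=\rho_s$; its determinant equals $\det\{\rho\}$; and $R_{12}=0$ if and only if $\mathcal{M}$ is flat. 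Thus non-flatness gives $0\neq R_{12}\in\mathfrak{sl}(2,\mathbb{R})$, and $\mathcal{M}$ has rank-one Ricci exactly when $R_{12}$ is nilpotent.

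For the forward implication, suppose $0\neq T\in\mathcal{P}^0(\mathcal{M})$. Since $R_{12}$ is nonzero and trace free, hence non-scalar, its commutant is $\operatorname{span}\{\operatorname{Id},R_{12}\}$; as $\operatorname{Tr}\{R_{12}\}=0$, the trace-free tensor must equal $T=q\,R_{12}$ for a scalar function $q$, nowhere zero because $T$ never vanishes (Lemma~\ref{L1.4}). Feeding this back into $\partial_k T=[T,A_k]$ yields $[A_k,R_{12}]=c_k R_{12}$ with $c_k=-\partial_k\log|q|$. I would then exclude the possibility that $R_{12}$ is semisimple (eigenvalues $\pm\mu$, real or purely imaginary): in an eigenbasis $R_{12}$ is diagonal while $[A_k,R_{12}]$ is off-diagonal, so $[A_k,R_{12}]\in\mathbb{R}\,R_{12}$ forces $[A_k,R_{12}]=0$; both $A_1,A_2$ are then diagonal in that basis and $R_{12}=[A_1,A_2]=0$, a contradiction. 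Therefore $R_{12}$ is nilpotent, $\det\{\rho\}=0$, and $\rho$ has rank one (rank zero being ruled out by non-flatness); moreover $T=q\,R_{12}$ is nilpotent.

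For the converse and the normal form, assume $\rho$ has rank one, so $R_{12}$ is nonzero nilpotent with $L:=\operatorname{im}(R_{12})=\ker(R_{12})$ a line. The key algebraic step is: if a commutator $[A_1,A_2]$ is nonzero and nilpotent, then $A_1$ and $A_2$ both preserve $L$. I would prove this by reducing to the trace-free parts and expanding in an $\mathfrak{sl}(2,\mathbb{R})$ basis $\{h,e,f\}$ adapted to $L$: vanishing of the $h$- and $f$-components of $[A_1,A_2]$, together with nonvanishing of the $e$-component, forces the $f$-coefficients of $A_1$ and $A_2$ to vanish, since otherwise $A_1$ and $A_2$ would be proportional and their commutator zero. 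Hence $L$ is a parallel line field, and an element $\Theta\in\operatorname{GL}(2,\mathbb{R})$ carrying $L$ to $\langle\partial_{x^1}\rangle$ produces a linearly equivalent Type~$\mathcal{A}$ structure with $\Gamma_{11}{}^2=\Gamma_{12}{}^2=0$ (equivalently $(A_k)^2{}_1=0$). In this gauge a direct substitution shows that $T=f\,\partial_{x^1}\otimes dx^2$ is parallel if and only if $\partial_1 f=-\Gamma_{11}{}^1 f$ and $\partial_2 f=(\Gamma_{22}{}^2-\Gamma_{12}{}^1)f$; these constant-coefficient equations are compatible and integrate to the stated exponential. Since every trace-free parallel tensor is a multiple of $R_{12}$ and the scalar is determined up to a constant, $\mathcal{P}^0(\mathcal{M})=T\cdot\mathbb{R}$ is one-dimensional.

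I expect the main obstacle to be the dichotomy at the level of $R_{12}$: showing that a parallel trace-free tensor can coexist with the connection only when the curvature endomorphism is nilpotent. Both halves of this, namely the collapse $R_{12}=[A_1,A_2]=0$ in the semisimple case and the invariance of $\operatorname{im}(R_{12})$ in the nilpotent case, rest on the same $\mathfrak{sl}(2,\mathbb{R})$ computation, which is precisely where the geometry (the rank of $\rho_s$) is converted into linear algebra. The remaining normal-form reduction and the integration of $f$ are routine.
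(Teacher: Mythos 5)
Your proof is correct, and it takes a genuinely different route from the paper. The paper's argument is Lie-theoretic: it uses the affine Killing fields $\partial_{x^1},\partial_{x^2}$ of a Type~$\mathcal{A}$ structure to make $\mathcal{P}^0_{\mathbb{C}}(\mathcal{M})$ a module over commuting operators, extracts a joint eigenvector to get the exponential ansatz $T=e^{a_1x^1+a_2x^2}\mathfrak{t}$ (Lemma~\ref{L4.1}), rules out nonzero eigenvalues by the case analysis in the subsequent lemma, invokes Theorem~\ref{T1.5} for the dimension count, and leans on external results (Lemma 2.3 of \cite{BGG18}, or equivalently Theorem~\ref{T1.5b-EGR}~(3) together with Wong's adapted coordinates \cite{W64}) for the converse ``rank one $\Rightarrow\mathcal{P}^0\ne\{0\}$''. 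You instead exploit the constant coefficients fully: parallelism becomes the system $\partial_kT=[T,A_k]$, whose integrability obstruction is $[T,R_{12}]=0$ with $R_{12}=[A_1,A_2]$ the curvature operator; the centralizer of a non-scalar $2\times2$ matrix then identifies every parallel trace-free tensor as $qR_{12}$, the semisimple case collapses to $R_{12}=0$, and the converse follows from your $\mathfrak{sl}(2,\mathbb{R})$ computation showing that a nonzero nilpotent commutator forces both $A_k$ to preserve $\ker(R_{12})=\operatorname{im}(R_{12})$ (I checked this computation; it is right, and it also yields the one-dimensionality without appealing to Theorem~\ref{T1.5}, since the scalar $q$ solves a determined constant-coefficient system). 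What your approach buys is a self-contained, purely linear-algebraic proof that makes the conceptual point transparent: the parallel tensor is, up to an exponential rescaling, the curvature endomorphism itself, so the rank-one condition on $\rho$ (equivalently nilpotency of $R_{12}$, since $\det\rho=\det R_{12}$ and $\operatorname{Tr}R_{12}=0$ gives $\rho=\rho_s$) is exactly the obstruction-free case. What the paper's approach buys is uniformity: the Killing-field technique transfers verbatim to the Type~$\mathcal{B}$ setting (Lemma~\ref{L5.1}), where the Christoffel symbols are no longer constant and your matrix-ODE argument would not apply as stated.
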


\begin{remark}\rm\label{R1.10}
If $\mathcal{M}$ is a Type~$\mathcal{A}$ geometry which is not flat, then $\mathcal{M}$ is neither K\"ahler nor para-K\"ahler. Furthermore, any Type~$\mathcal{A}$ surface with $\operatorname{Rank}(\rho_s)=2$ satisfies 
$\dim\{\mathcal{P}^0(\mathcal{M})\}=0$. 
\end{remark}

\begin{remark}
\label{R1.11}
\rm
Let $\mathcal{M}$ be a Type~$\mathcal{A}$ surface with Ricci tensor of rank one and let
$T=e^{a_1x^1+a_2x^2}\partial_{x^1}\otimes dx^2$ be a nilpotent K\"ahler structure as in Theorem \ref{T1.9}. 
A straightforward calculation shows that the corresponding modified Riemannian extension 
$(T^*M,g_{\nabla,\phi,T})$ with deformation tensor field $\phi\equiv 0$ is anti-self-dual. 
This is due to the fact that any Type~$\mathcal{A}$ homogeneous geometry is projectively flat
(see Remark~\ref{EGR-3.5}).
Moreover it has been shown in \cite{BGG18} that any Type~$\mathcal{A}$ surface with Ricci tensor of rank one admits affine gradient Ricci solitons (i.e., smooth functions $f\in\mathcal{C}^\infty(M)$ satisfying $\operatorname{Hess}(f)+2\rho_s=0$) so that $df(\operatorname{ker}(\rho))=0$. Hence $(T^*M,g_{\nabla,0,T},h=\pi^*f)$ is an anti-self-dual gradient Ricci soliton which is never locally conformally flat. In this setting, the soliton is 	steady (i.e., $\lambda=0$) and isotropic (i.e., $\|d\pi^* f\|^2=0$).

In a more general setting, results in \cite{BVGRGVR} show that any  Type~$\mathcal{A}$ surface with Ricci tensor of rank one admits solutions of the affine quasi-Einstein equation (i.e., smooth functions $f\in\mathcal{C}^\infty(M)$ satisfying $\operatorname{Hess}(f)+2\rho_s-\mu\, df\otimes df=0$) so that $df(\operatorname{ker}(\rho))=0$. Hence $(T^*M,g_{\nabla,0,T},h=\pi^*f)$ is an anti-self-dual quasi-Einstein manifold which is never locally conformally flat.
\end{remark}

The situation is more complicated in the Type~$\mathcal{B}$ setting. For instance, there exist 
simply connected affine surfaces with $\operatorname{Rank}(\rho_s)=1$ but non-recurrent $\rho_s$
and $\dim\{\mathcal{P}^0(\mathcal{M})\}=0$. We will discuss these examples in Section~\ref{S5}. 
Also, in contrast with Type $\mathcal{A}$ surfaces, there are non flat Type $\mathcal{B}$ surfaces with $\rho_s=0$. 
This situation is discussed in Lemma \ref{L5.4}. A complete description of Type $\mathcal{B}$ surfaces with 
$\dim\{\mathcal{P}^0(\mathcal{M})\}=1$ is given in Section \ref{S5}, where explicit examples of 
K\"ahler, para-K\"ahler and nilpotent K\"ahler structures on Type $\mathcal{B}$ geometries are presented.

\section{Preliminary results}

\subsection{The proof of Lemma~\ref{L1.4}}\label{S2}
If $\mathbb{F}$ is a field, let $M_2(\mathbb{F})$ be the unital algebra of $2\times 2$ matrices with entries in $\mathbb{F}$
and let $M_2^0(\mathbb{F})\subset M_2(\mathbb{F})$ be the linear subspace of trace free matrices.
The sum and product of parallel tensors of Type~(1,1) is again parallel. Since $\operatorname{id}=(\delta^i{}_j)$ is parallel,
$\mathcal{P}(\mathcal{M})$ is a unital algebra. Fix a point $P\in M$. Since $M$ is connected, a parallel tensor is defined by
its value at a single point. Thus the map $T\rightarrow T(P)$ is a unital algebra homomorphism
which embeds $\mathcal{P}(\mathcal{M})$ into
$M_2(\mathbb{R})$ relative to the coordinate basis. Thus $\mathcal{P}(\mathcal{M})$ 
has dimension at most 4. Let $T\in\mathcal{P}(\mathcal{M})$.
Since $d\{\operatorname{Tr}(T)\}=\operatorname{Tr}(\nabla T)=0$, $\operatorname{Tr}(T)$ is constant.
By replacing $T$ by $T-\frac12\operatorname{Tr}(T)\operatorname{id}$, we may assume that $T\in\mathcal{P}^0(\mathcal{M})$
is trace free. The eigenvalues of $T$ are then $\{\lambda(P),-\lambda(P)\}$ so $\operatorname{Tr}\{T^2\}=2\lambda^2(P)$.
Since $T^2$ is parallel, this implies $\lambda^2(\cdot)$ is constant and hence the eigenvalues themselves are constant.
\qed
\subsection{Canonical local coordinates} Let $T$ be a tensor of Type~(1,1) on a smooth surface $M$ such that
the eigenvalues of $T$ are constant; this is equivalent, of course, to assuming either that $\operatorname{Tr}\{T\}$ and 
$\operatorname{Tr}\{T^2\}$ are constant on $M$ or that $\operatorname{Tr}\{T\}$ and $\det\{T\}$ are constant on $M$.
By subtracting a suitable multiple of the identity from $T$, we can assume $T$ is trace free. We have the following useful observation.
\begin{lemma}\label{L2.1} Let $0\ne T$ be a trace free tensor of Type~(1,1) on a smooth manifold $M$ with $\det\{T\}\in\{0,\pm1\}$.
\begin{enumerate}
\item If $\det\{T\}=0$, we can choose local coordinates so $T=\partial_{x^1}\otimes dx^2$.
\item If $\det\{T\}=1$, we can choose local coordinates so $T=\partial_{x^2}\otimes dx^1-\partial_{x^1}\otimes dx^2$.
\item If $\det\{T\}=-1$, we can choose local coordinates so $T=\partial_{x^1}\otimes dx^1-\partial_{x^2}\otimes dx^2$.
\end{enumerate}\end{lemma}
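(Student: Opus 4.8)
The plan is to reduce the statement to the construction of charts adapted to $T$ and to treat the three values of $\det\{T\}$ separately. Throughout I take $M$ to be a surface, as is forced by the target normal forms (they only involve $x^1,x^2$), so $T$ is pointwise a $2\times2$ trace free matrix. By the Cayley--Hamilton theorem such a matrix satisfies $T^2=-\det\{T\}\operatorname{id}$, so the cases $\det\{T\}\in\{0,1,-1\}$ correspond respectively to $T^2=0$, $T^2=-\operatorname{id}$, and $T^2=\operatorname{id}$. In each case the desired normal form is equivalent to producing a chart whose coordinate vector fields are eigen/Jordan vectors of $T$: namely $T\partial_{x^1}=0$ and $T\partial_{x^2}=\partial_{x^1}$ in (1); $T\partial_{x^1}=\partial_{x^2}$ and $T\partial_{x^2}=-\partial_{x^1}$ in (2); and $T\partial_{x^1}=\partial_{x^1}$ and $T\partial_{x^2}=-\partial_{x^2}$ in (3).

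I would dispose of (1) and (3) by the flow box theorem, since both only require rectifying real line fields. In (3) the $\pm1$ eigenspaces of $T$ are transverse smooth line fields $L_\pm$; choosing a first integral $x^2$ of $L_+$ and a first integral $x^1$ of $L_-$ (each exists locally by the flow box theorem) yields a chart in which $\partial_{x^1}$ spans $L_+$ and $\partial_{x^2}$ spans $L_-$, which is exactly the prescribed form. In (1), $T^2=0$ together with $T\ne0$ forces $\operatorname{Im}\{T\}=\ker\{T\}=:L$, a single line field; rectifying $L$ gives coordinates $(u,v)$ with $T\partial_u=0$ and $T\partial_v=b\,\partial_u$ for a nowhere-zero function $b$, that is $T=b\,\partial_u\otimes dv$. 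I would then keep $x^2=v$ and solve the ordinary differential equation $\partial x^1/\partial u=1/b$ by integrating in $u$ with $v$ as a parameter; the change of coordinates $(u,v)\mapsto(x^1,v)$ has Jacobian $1/b\ne0$, and a direct substitution (using $\partial_{x^1}=b\,\partial_u$ and $dv=dx^2$) transforms $T$ into $\partial_{x^1}\otimes dx^2$.

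The genuine difficulty is (2), where $T$ has no real eigenvectors, so no real line field is available and the flow box argument breaks down. Here $T$ is an almost complex structure on $M$, and producing a chart with $T\partial_{x^1}=\partial_{x^2}$ amounts to finding a local $T$-holomorphic coordinate, i.e. to solving a first order elliptic (Beltrami-type) equation rather than an ordinary differential equation. I would therefore invoke the integrability of almost complex structures on surfaces, already cited above as \cite{C04,NN57}: integrability is automatic in real dimension two, and the resulting complex coordinate $z=x^1+\sqrt{-1}\,x^2$ gives precisely $T\partial_{x^1}=\partial_{x^2}$ and $T\partial_{x^2}=-\partial_{x^1}$, hence $T=\partial_{x^2}\otimes dx^1-\partial_{x^1}\otimes dx^2$. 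This step is the crux: unlike (1) and (3) it is not elementary and rests on the cited existence theorem.
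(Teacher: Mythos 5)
Your proof is correct, and it runs parallel to the paper's proof in structure while genuinely differing in two of the three cases. For assertion (1), both arguments start the same way, by rectifying the line field $\ker T=\operatorname{Im}T$; but the paper then solves an ODE for an auxiliary function $g$ to build a commuting frame $X_1=\partial_{y^1}+g\,\partial_{y^2}$, $X_2=f\,\partial_{y^2}$ with $TX_1=X_2$, and invokes the theorem that a commuting independent frame is a coordinate frame, whereas you reparametrize the rectified coordinate directly by integrating $\partial x^1/\partial u=1/b$; your route is slightly more economical, since the commuting frame $\partial_{x^1}=b\,\partial_u$, $\partial_{x^2}$ then comes for free as a coordinate frame, and your substitution $T=b\,\partial_u\otimes dv=\partial_{x^1}\otimes dx^2$ checks out. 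For assertion (2) you do exactly what the paper does: both quote Newlander--Nirenberg \cite{NN57}, and your remark that this is the one step that cannot be reduced to ODE arguments is accurate. The real divergence is assertion (3): the paper simply cites integrability of para-complex structures in dimension two \cite{C04}, while you prove it from scratch, taking first integrals $x^2$ of $L_+$ and $x^1$ of $L_-$ and noting that $dx^1\wedge dx^2\ne0$ because $L_+$ and $L_-$ are transverse, so that $\partial_{x^1}\in\ker dx^2=L_+$ and $\partial_{x^2}\in\ker dx^1=L_-$ are genuine $\pm1$ eigenfields. This makes your treatment of (3) self-contained where the paper's is not, at essentially no extra cost since only the flow-box theorem is used. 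One shared (and harmless) implicit convention: both proofs work near points where $T\ne0$, which is the relevant situation since in the paper's application $T$ is parallel and hence, by Lemma~\ref{L1.4}, nowhere vanishing once it is nonzero somewhere.
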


\begin{proof}Let $0\ne T$ be nilpotent.
Let $Y_1$ be a nonzero vector field which is defined locally
so that $TY_1\ne0$. Then $Y_2:=TY_1$ spans $\ker(T)$. Choose local coordinates $(y^1,y^2)$ so
that $Y_2=\partial_{y^2}$. Then $T\partial_{y^1}$ is a nonzero multiple of $\partial_{y^2}$, i.e.
$T\partial_{y^1}=f\partial_{y^2}$. Let $X_1=\partial_{y^1}+g\partial_{y^2}$ and $X_2=f\partial_{y^2}$ where $g$ remains to be determined.
Then $TX_1=X_2$. We have
$[X_1,X_2]=(\partial_{y^1}f+g\partial_{y^2}f-f\partial_{y^2}g)\partial_{y^2}$. Solve the ODE
$$
\partial_{y^2}g(y^1,y^2)=f^{-1}\{\partial_{y^1}f+g\partial_{y^2}f\}\text{ with }g(y^1,0)=0\,.
$$
This ensures $[X_1,X_2]=0$. Since $\{X_1,X_2\}$ are linearly independent, we can choose local coordinates
$(x^1,x^2)$ so $\partial_{x^1}=X_1$ and $\partial_{x^2}=X_2$. We then have $T\partial_{x^1}=\partial_{x^2}$ and $T\partial_{x^2}=0$;
Assertion~(1) follows 
after interchanging the roles of $x^1$ and $x^2$.
 
 If $\det\{T\}=1$, then $T^2=-\operatorname{id}$ and $T$ defines an almost complex structure. 
 Since $M$ is a surface, the Nirenberg-Newlander Theorem~\cite{NN57}
shows that we can choose local coordinates so $T\partial_{x^1}=\partial_{x^2}$ and $T\partial_{x^2}=-\partial_{x^1}$. Assertion~(2) now follows.

Let $\det\{T\}=-1$. Then $T^2=\operatorname{id}$ and $T$ defines an almost
para-complex structure. 
Since we are in dimension $2$, the para-complex structure is integrable and we can choose local coordinates so
$T\partial_{x^1}=\partial_{x^1}$ and $T\partial_{x^2}=-\partial_{x^2}$ (see, for example, \cite{C04}).
Assertion~(3) follows.
\end{proof}

\section{The proofs of Theorem~\ref{T1.5} and Theorem~\ref{T1.5b-EGR}}\label{S3}
In Section~\ref{S3.1},
we give a local form for the Christoffel symbols (see Equation~(\ref{E3.a})) that holds if and only if $\rho_s=0$.
If Equation~(\ref{E3.a}) holds, we compute $\rho$, we show that $\dim\{\mathcal{P}(\mathcal{M})\}=4$, and we
give an explicit basis for $\mathcal{P}(\mathcal{M})$ in this setting.  
 In Section~\ref{S3.2},
we give a local form for the Christoffel symbols (see Equation~(\ref{E3.b})) that holds if and only if 
$\mathcal{M}$ is nilpotent K\"ahler, i.e. $\mathcal{P}(\mathcal{M})$
contains a non-trivial nilpotent element.
If Equation~(\ref{E3.b}) holds, we compute $\rho_s$ and exhibit a non-trivial nilpotent element of
$\mathcal{P}^0(\mathcal{M})$ quite explicitly. We show that if
Equation~(\ref{E3.b}) holds, and if $\dim\{\mathcal{P}^0(\mathcal{M})\}\ge2$, then additional relations on the Christoffel
symbols pertain (see Equation~(\ref{E3.c})). If both Equation~(\ref{E3.b}) and Equation~(\ref{E3.c}) hold,
then $\dim\{\mathcal{P}^0(\mathcal{M})\}=3$,
$\rho_s=0$, and we exhibit an explicit basis for $\mathcal{P}^0(\mathcal{M})$. This shows that $\dim\{\mathcal{P}^0(\mathcal{M})\}\ne2$
if $\mathcal{P}^0(\mathcal{M})$ contains a non-trivial nilpotent element. In Section~\ref{S3.3}, we perform a similar analysis for K\"ahler
structures and in Section~\ref{S3.4}, we treat para-K\"ahler structures. Theorem~\ref{T1.5} and Theorem~\ref{T1.5b-EGR}
will follow from this analysis. 

\subsection{Trivial symmetric Ricci tensor}\label{S3.1}

\begin{lemma}\label{EGR-L3.1}
Let $(M,\nabla)$ be an affine surface which is not flat.
\begin{enumerate}
\item $\rho_s=0$ if and only if there is a coordinate atlas with locally defined $\varphi$ so:
\begin{equation}\label{E3.a}
\Gamma_{11}{}^1=0,\ \Gamma_{11}{}^2=0,\ \Gamma_{12}{}^1=\partial_{x^1}\varphi,\ 
\Gamma_{12}{}^2=0,\ \Gamma_{22}{}^1=\partial_{x^2}\varphi,\ \Gamma_{22}{}^2=\partial_{x^1}\varphi\,.
\end{equation}
\item If Equation~(\ref{E3.a}) holds, then $\rho=-\partial_{x^1}\partial_{x^1}\varphi\, dx^1\wedge dx^2$, and
$$
\mathcal{P}^0(\mathcal{M})=\operatorname{Span}\left\{\left(\begin{array}{cc}0&1\\0&0\end{array}\right),\,\,
\left(\begin{array}{cc}1&2\varphi\\0&-1\end{array}\right),\,\,
\left(\begin{array}{cc}-\varphi&-\varphi^2\\1&\varphi\end{array}\right)\right\}\,.
$$
\end{enumerate}
\end{lemma}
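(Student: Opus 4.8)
The plan is to prove both directions of the equivalence in Assertion~(1) and then read off Assertion~(2) from the resulting normal form. The implication ``(\ref{E3.a}) $\Rightarrow\rho_s=0$'', together with the curvature formula in Assertion~(2), is a direct computation: assuming the Christoffel symbols are given by Equation~(\ref{E3.a}), I would compute $R_{ijk}{}^l$ and contract to get $\rho=-\partial_{x^1}\partial_{x^1}\varphi\,dx^1\wedge dx^2$. Being a $2$-form, this is skew, so $\rho_s=0$, and non-flatness is equivalent to $\partial_{x^1}\partial_{x^1}\varphi\neq0$. For the $\mathcal{P}^0$ statement I would substitute Equation~(\ref{E3.a}) into the defining equations $\partial_{x^k}T^i{}_j+\Gamma_{k\ell}{}^iT^\ell{}_j-\Gamma_{kj}{}^\ell T^i{}_\ell=0$ and check directly that each of the three displayed matrices is parallel. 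Since they are manifestly linearly independent over $\mathbb{R}$ (inspect the $(2,1)$- and $(1,1)$-entries) and $\dim\{\mathcal{P}^0(\mathcal{M})\}\le3$ by Lemma~\ref{L1.4}, they form a basis of $\mathcal{P}^0(\mathcal{M})$.

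The substance is the converse. Writing $\mathcal{R}:=R(\partial_{x^1},\partial_{x^2})$ for the curvature endomorphism and using only the skew-symmetry of $R$ in its first two arguments, one has $\rho(\partial_{x^1},\partial_{x^k})=-\mathcal{R}^2{}_k$ and $\rho(\partial_{x^2},\partial_{x^k})=\mathcal{R}^1{}_k$. Hence $\rho_s=0$ forces $\mathcal{R}=-\rho(\partial_{x^1},\partial_{x^2})\operatorname{id}$, a scalar multiple of the identity, which is nonzero precisely because $\mathcal{M}$ is not flat. In dimension two every $R(X,Y)$ is a scalar multiple of $\mathcal{R}$, hence itself a multiple of $\operatorname{id}$; as conjugation fixes scalar endomorphisms, the Ambrose--Singer theorem shows the local holonomy lies in $\mathbb{R}\cdot\operatorname{id}$. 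A scalar transformation preserves every line, so parallel transport preserves lines and, on a contractible chart, produces a parallel line field $L$.

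I would then build coordinates adapted to $L$: choose $\partial_{x^1}$ to span $L$ with its integral curves affinely parametrized (they are pregeodesics since $L$ is parallel) and $\partial_{x^2}$ transverse. Parallelism of $L$ gives $\nabla_{\partial_{x^1}}\partial_{x^1},\nabla_{\partial_{x^2}}\partial_{x^1}\in L$, which with the affine parametrization yields $\Gamma_{11}{}^1=\Gamma_{11}{}^2=\Gamma_{12}{}^2=0$. Imposing $\rho_s=0$ in these coordinates reduces to two equations: $\rho_s(\partial_{x^1},\partial_{x^2})=0$ gives $\partial_{x^1}(\Gamma_{12}{}^1-\Gamma_{22}{}^2)=0$, so $\Gamma_{12}{}^1-\Gamma_{22}{}^2=e(x^2)$, while $\rho_s(\partial_{x^2},\partial_{x^2})=0$ gives $\partial_{x^1}\Gamma_{22}{}^1-\partial_{x^2}\Gamma_{12}{}^1+\Gamma_{12}{}^1\Gamma_{22}{}^2-(\Gamma_{12}{}^1)^2=0$. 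The residual freedom preserving the adapted form is $\tilde x^1=a(x^2)x^1+b(x^2)$, $\tilde x^2=c(x^2)$; a short computation shows that taking $c(x^2)=x^2$ and $a=\exp(\int e\,dx^2)$ removes $e$, i.e. $\Gamma_{12}{}^1=\Gamma_{22}{}^2$. The second equation then collapses to $\partial_{x^2}\Gamma_{12}{}^1=\partial_{x^1}\Gamma_{22}{}^1$, exactly the integrability condition for a potential $\varphi$ with $\partial_{x^1}\varphi=\Gamma_{12}{}^1=\Gamma_{22}{}^2$ and $\partial_{x^2}\varphi=\Gamma_{22}{}^1$, which is Equation~(\ref{E3.a}).

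The main obstacle is the passage from $\rho_s=0$ to a usable coordinate normalization. In arbitrary geodesic coordinates one finds $\rho(\partial_{x^1},\partial_{x^1})=-\partial_{x^1}\Gamma_{12}{}^2-(\Gamma_{12}{}^2)^2$, so $\rho_s=0$ merely imposes a Riccati equation on $\Gamma_{12}{}^2$ and need not make it vanish; worse, the residual coordinate changes only rescale $\Gamma_{12}{}^2$ and so cannot remove it. One therefore genuinely needs the parallel line field $L$ to select coordinates in which $\Gamma_{12}{}^2=0$ from the outset, and the scalar-holonomy argument establishing the existence of $L$ is the crux of the proof. The subsequent normalization of $e(x^2)$ and the verification that the three matrices are parallel are then routine.
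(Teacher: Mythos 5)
Your proof is correct, but the route you take for the hard direction is genuinely different from the paper's. You derive from $\rho_s=0$ that every curvature endomorphism $R(X,Y)$ is a scalar multiple of the identity, invoke Ambrose--Singer to conclude the restricted holonomy lies in $\mathbb{R}\cdot\operatorname{id}$, obtain a parallel line field $L$, and then build coordinates adapted to $L$ so that $\Gamma_{11}{}^1=\Gamma_{11}{}^2=\Gamma_{12}{}^2=0$ from the outset; after that you need an extra normalization, using the residual freedom $\tilde x^1=a(x^2)x^1+b(x^2)$, to kill $e(x^2)=\Gamma_{12}{}^1-\Gamma_{22}{}^2$, and your final integrability step for $\varphi$ agrees with the paper's. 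The paper instead uses a Fermi-type construction: geodesics $\Psi(s,t)=\exp_{\sigma(t)}(se_1(t))$ emanating from a geodesic transversal $\sigma$ along which $e_1$ is parallel. This yields $\Gamma_{11}{}^k\equiv 0$ together with the initial conditions $\Gamma_{12}{}^k(0,x^2)=\Gamma_{22}{}^k(0,x^2)=0$; then $\rho_{s,11}=0$ is exactly the Riccati equation $\partial_{x^1}\Gamma_{12}{}^2+(\Gamma_{12}{}^2)^2=0$ you mention, whose solution with zero initial data vanishes identically by ODE uniqueness, and $\rho_{s,12}=0$ with the same initial conditions forces $\Gamma_{12}{}^1=\Gamma_{22}{}^2$ directly, with no residual gauge fixing. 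In particular, your closing claim that one \emph{genuinely needs} the parallel line field to obtain $\Gamma_{12}{}^2=0$ is overstated: the paper's choice of initial data along the transversal does the job through precisely the Riccati equation you dismiss, because the initial condition, not the equation alone, is what kills $\Gamma_{12}{}^2$. What your approach buys is conceptual clarity -- it exhibits the parallel line field (indeed, scalar holonomy makes every line at a point extend to a parallel line field) underlying the normal form, which ties in nicely with the paper's theme of parallel tensors and recurrence; what the paper's approach buys is economy, since the ODE argument is self-contained, requires no holonomy theory, and needs no secondary coordinate normalization.
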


\begin{proof} Suppose $\rho_s=0$. Fix a local basis $\{e_1,e_2\}$ for $T_PM$. Let $\sigma(t):=\exp_P(te_2)$.
	Extend $e_1$ along $\sigma$ to be parallel and let $\Psi(s,t):=\exp_{\sigma(t)}(se_1(t))$.
	This gives a system of local coordinates where
	$\nabla_{\partial_t}\partial_t|_{s=0}=0$, $\nabla_{\partial_t}\partial_s|_{s=0}=0$, $\nabla_{\partial_s}\partial_s=0$,
	i.e.
	$$\begin{array}{lll}
	\Gamma_{22}{}^1(0,x^2)=0,&\Gamma_{22}{}^2(0,x^2)=0,&\Gamma_{12}{}^1(0,x^2)=0,\\[0.05in]
	\Gamma_{12}{}^2(0,x^2)=0,&\Gamma_{11}{}^1(x^1,x^2)=0,&\Gamma_{11}{}^2(x^1,x^2)=0\,.
	\end{array}$$
	We have $0=\rho_{s,11}=-(\Gamma_{12}{}^2)^2-\partial_{x^1}\Gamma_{12}{}^2=0$. Since $\Gamma_{12}{}^2(0,x^2)=0$,
	this ODE implies $\Gamma_{12}{}^2=0$. Setting $\rho_{s,12}=0$ then yields
	$\partial_{x^1}\{\Gamma_{12}{}^1-\Gamma_{22}{}^2\}=0$. Since $\Gamma_{12}{}^1(0,x^2)=0$ and $\Gamma_{22}{}^2(0,x^2)=0$,
	we conclude $\Gamma_{12}{}^1=\Gamma_{22}{}^2$. Setting $\rho_{s,22}=0$ yields 
	$-\partial_{x^2}\Gamma_{22}{}^2+\partial_{x^1}\Gamma_{22}{}^1=0$. Consequently $\Gamma_{22}{}^2=\partial_{x^1}\varphi$ and
	$\Gamma_{22}{}^1=\partial_{x^2}\varphi$ for some smooth function $\varphi$. 
	This yields the relations of Equation~(\ref{E3.a}). Conversely, if Equation~(\ref{E3.a}) holds, then a direct computation shows
	that $\rho_s=0$ and that the 3 endomorphisms of Assertion~(2)
	are parallel. Since these endomorphisms are linearly independent and $\dim\{\mathcal{P}^0(\mathcal{M})\}\le3$, Assertion~(2) holds.
\end{proof}

\subsection{Nilpotent K\"ahler structures}\label{S3.2}

\begin{lemma}\label{L3.2} Let $(M,\nabla)$ be an affine surface which is not flat. 
\begin{enumerate}
\item If $\mathcal{M}$ admits a nilpotent K\"ahler structure, there is a coordinate atlas so
\begin{equation}\label{E3.b}
\Gamma_{11}{}^1=0,\ \Gamma_{11}{}^2=0,\ \Gamma_{12}{}^2=0,\ 
\Gamma_{22}{}^2=\Gamma_{12}{}^1\,.
\end{equation}
\item If Equation~(\ref{E3.b}) holds, then $\rho_s=(\partial_{x^1}\Gamma_{22}{}^1-\partial_{x^2}\Gamma_{12}{}^1)\,dx^2\otimes dx^2$ and\newline
$T=\partial_{x^1}\otimes dx^2\in\mathcal{P}^0(\mathcal{M})$.
\item If Equation~(\ref{E3.b}) holds and if $\dim\{\mathcal{P}^0(\mathcal{M})\}\ge2$, then
\begin{equation}\label{E3.c}
\Gamma_{12}{}^1=-\partial_{x^1}\psi\,\,\text{ and }\,\,\Gamma_{22}{}^1=-\partial_{x^2}\psi\,\,\text{ for some smooth function}\,\, \psi\,.
\end{equation}
\item If Equations~(\ref{E3.b}) and (\ref{E3.c}) hold, then $\rho=\partial_{x^1}\partial_{x^1}\psi\, dx^1\wedge dx^2$ and\smallbreak
$\mathcal{P}^0(\mathcal{M})=\operatorname{Span}\left\{
\left(\begin{array}{cc}0&1\\0&0\end{array}\right),
\left(\begin{array}{cc}\psi&-\psi^2\\1&-\psi\end{array}\right),
\left(\begin{array}{cc}1&-2\psi\\0&-1\end{array}\right)\right\}$.
\end{enumerate}
\end{lemma}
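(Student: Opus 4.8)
The plan is to handle the four assertions in sequence, in each case working in the coordinates adapted to the nilpotent tensor supplied by Lemma~\ref{L2.1} and reading off the conclusions from the defining equation $\nabla T=0$ together with a direct curvature computation. For Assertion~(1), I would start from a nonzero nilpotent $T\in\mathcal{P}^0(\mathcal{M})$; since $T$ is trace free with $\det\{T\}=0$, Lemma~\ref{L2.1}(1) provides coordinates in which $T=\partial_{x^1}\otimes dx^2$, so $T^1{}_2=1$ is the only nonzero component. Feeding this constant $T$ into $\partial_{x^k}T^i{}_j+\Gamma_{k\ell}{}^iT^\ell{}_j-\Gamma_{kj}{}^\ell T^i{}_\ell=0$ and running over all $(i,j,k)$ collapses to $\Gamma_{k1}{}^2=0$ and $\Gamma_{k1}{}^1=\Gamma_{k2}{}^2$ for $k=1,2$, which after combining are exactly the relations of Equation~(\ref{E3.b}).

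Reading the same computation backwards gives the parallelism claim in Assertion~(2): whenever Equation~(\ref{E3.b}) holds, $T=\partial_{x^1}\otimes dx^2$ satisfies $\nabla T=0$ and is manifestly trace free and nilpotent, hence lies in $\mathcal{P}^0(\mathcal{M})$. The formula for $\rho_s$ then follows by substituting Equation~(\ref{E3.b}) into the coordinate expression for the Ricci tensor, a routine calculation.

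The heart of the lemma is Assertion~(3). Assuming Equation~(\ref{E3.b}), I abbreviate $A:=\Gamma_{12}{}^1$ and $B:=\Gamma_{22}{}^1$, so that the only nonzero Christoffel symbols are $\Gamma_{12}{}^1=A$, $\Gamma_{22}{}^1=B$, and $\Gamma_{22}{}^2=A$, and I take $S=\left(\begin{smallmatrix}a&b\\c&-a\end{smallmatrix}\right)\in\mathcal{P}^0(\mathcal{M})$ linearly independent from $T$. Writing out $\nabla S=0$ in these coordinates yields the first order system $\partial_{x^1}a=-Ac$, $\partial_{x^1}b=2Aa$, $\partial_{x^1}c=0$, $\partial_{x^2}a=-Bc$, $\partial_{x^2}b=2Ba$, $\partial_{x^2}c=0$; in particular $c$ is constant. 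Since $S$ is not a multiple of $T=\left(\begin{smallmatrix}0&1\\0&0\end{smallmatrix}\right)$ we have $(a,c)\neq(0,0)$, so either $c\neq0$, or $c=0$ and (by the first and fourth equations) $a$ is a nonzero constant. In the first case equality of the mixed second partials of $a$ forces $c\,\partial_{x^2}A=c\,\partial_{x^1}B$; in the second case the same requirement applied to $b$ forces $a\,\partial_{x^2}A=a\,\partial_{x^1}B$. Either way $\partial_{x^2}\Gamma_{12}{}^1=\partial_{x^1}\Gamma_{22}{}^1$, so the $1$-form $\Gamma_{12}{}^1\,dx^1+\Gamma_{22}{}^1\,dx^2$ is closed and hence locally exact, say equal to $-d\psi$, which is precisely Equation~(\ref{E3.c}). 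I expect this step to be the main obstacle: the point is that a second, otherwise unconstrained, parallel tensor forces this single integrability condition uniformly, and one must verify it in both branches of the dichotomy.

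Finally, for Assertion~(4) I would substitute $A=-\partial_{x^1}\psi$ and $B=-\partial_{x^2}\psi$ into the Ricci computation of Assertion~(2); the symmetric part then cancels (consistent with $\rho_s=0$) and evaluating the full coordinate expression gives $\rho=\partial_{x^1}\partial_{x^1}\psi\,dx^1\wedge dx^2$. Integrating the system above with this choice of $A$ and $B$ gives the general solution $c=\mathrm{const}$, $a=c\psi+\alpha$, and $b=-c\psi^2-2\alpha\psi+\beta$ for constants $\alpha,\beta$; thus the solution space is exactly three dimensional, in agreement with the bound $\dim\{\mathcal{P}^0(\mathcal{M})\}\le3$ from Lemma~\ref{L1.4}. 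The three stated basis matrices are recovered by taking $(\alpha,\beta,c)=(0,1,0)$, $(0,0,1)$, and $(1,0,0)$, completing the proof.
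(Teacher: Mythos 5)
Your proposal is correct, and for the decisive Assertion~(3) it takes a genuinely different route from the paper. The paper argues algebraically: it splits into cases according to whether the second parallel tensor $S$ is nilpotent or not, exploits the unital algebra structure of $\mathcal{P}(\mathcal{M})$ (the product $ST$ is parallel, so $\operatorname{Tr}\{ST\}=S^2{}_1$ is constant; a dependence relation among $T$, $ST-\frac c2\operatorname{id}$, $STS$ forces $c=0$ in the non-nilpotent case), and then reads $\psi$ off directly as a component of the normalized $S$ (namely $\psi=S^1{}_1$ or $\psi=-\frac12 S^1{}_2$), so that Equation~(\ref{E3.c}) holds with a $\psi$ defined on the whole chart. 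You instead write $\nabla S=0$ as the explicit first-order system $\partial_{x^i}c=0$, $\partial_{x^1}a=-Ac$, $\partial_{x^2}a=-Bc$, $\partial_{x^1}b=2Aa$, $\partial_{x^2}b=2Ba$, observe that linear independence from $T$ forces either $c\ne0$ or $c=0$ with $a$ a nonzero constant, and in both branches extract the single integrability condition $\partial_{x^2}\Gamma_{12}{}^1=\partial_{x^1}\Gamma_{22}{}^1$ from equality of mixed partials, concluding by the Poincar\'e lemma. This is more elementary and unified (no case split on nilpotency, no trace or dependence-relation tricks), and your explicit integration of the system in Assertion~(4) is actually stronger than the paper's treatment: it derives the full three-parameter solution family $a=c\psi+\alpha$, $b=-c\psi^2-2\alpha\psi+\beta$, hence the exact dimension, rather than exhibiting three parallel tensors and citing the bound $\dim\{\mathcal{P}^0(\mathcal{M})\}\le3$. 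The one point worth noting is that the Poincar\'e lemma yields only a locally defined $\psi$ (consistent with the paper's local phrasing, cf. Lemma~\ref{EGR-L3.1}), whereas the paper's $\psi$ is global on the chart; your own equations repair this at no cost, since $\psi=a/c$ (when $c\ne0$) or $\psi=-b/(2a)$ (when $c=0$) is a globally defined primitive, making the appeal to the Poincar\'e lemma unnecessary.
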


\begin{remark}\rm\label{R3.3}
If $T$ is a nilpotent K\"ahler structure, then results of \cite{CGGV} show that the gradient Ricci soliton
$(T^*M,g_{\nabla,\phi,T}, h=\pi^* f)$ given in Theorem \ref{T1.2} is strictly Bach flat.
\end{remark}

\begin{proof} Let $0\ne T\in\mathcal{P}^0(\mathcal{M})$ be nilpotent.
By Lemma~\ref{L2.1}, we may choose coordinates so $T=\partial_{x^1}\otimes dx^2$.
Setting $\nabla T=0$ yields the following relations from which Equation~(\ref{E3.b}) follow
(see also \cite{CGGV}):
\begin{eqnarray*}
\nabla_{\partial x^1} T=0:&\left(
\begin{array}{cc}-\Gamma_{11}{}^2&\Gamma_{11}{}^1-\Gamma_{12}{}^2\\0&\Gamma_{11}{}^2
\end{array}\right)=\left(\begin{array}{cc}0&0\\0&0\end{array}\right)\,,
\\
\nabla_{\partial x^2} T=0:&
\left(\begin{array}{cc}-\Gamma_{12}{}^2&\Gamma_{12}{}^1-\Gamma_{22}{}^2\\0&\Gamma_{12}{}^2\end{array}\right)
=\left(\begin{array}{cc}0&0\\0&0\end{array}\right)\,.
\end{eqnarray*}

Assume Equation~(\ref{E3.b}) holds. A direct computation establishes Assertion~(2).
To prove Assertion~(3), assume in addition that $\dim\{\mathcal{P}^0(\mathcal{M})\}\ge2$ and choose $S\in\mathcal{P}^0(\mathcal{M})$ so $S$ and $T$
are linearly independent. We must establish the relations of Equation~(\ref{E3.c}).

\subsection*{Case 1} Suppose that $S$ is nilpotent. Express 
$$
S=\left(\begin{array}{cc}
S^1{}_1&S^1{}_2\\
S^2{}_1&-S^1{}_1
\end{array}\right);\quad 
ST
=\left(\begin{array}{cc}0&S^1{}_1\\0&S^2{}_1\end{array}\right)\,.
$$
Since $ST\in\mathcal{P}(\mathcal{M})$, $\operatorname{Tr}\{ST\}=S^2{}_1$ is constant. Thus $S^2{}_1=c$ for $c\in\mathbb{R}$ and
$$
S=\left(\begin{array}{cc}S^1{}_1&S^1{}_2\\c&-S^1{}_1\end{array}\right)\,.
$$
If $c=0$, then $\det(S)=-(S^1{}_1)^2=0$ implies $S^1{}_1=0$ so $S=S^1{}_2\, T$. Since $S$ and $T$ are parallel, $d S^1{}_2=0$ so $S^1{}_2\in\mathbb{R}$
and $S$ and $T$ are linearly dependent contrary to our assumption. Thus $c\ne0$ and we may rescale $S$ to assume $c=1$. Setting $\det(S)=0$ yields $S^1{}_2=-(S^1{}_1)^2$ so
$$
S=\left(\begin{array}{cc}
S^1{}_1&-(S^1{}_1)^2\\1&-S^1{}_1\end{array}\right)\,.
$$
We compute the covariant derivative $\nabla S=S^i{}_{j;k} \partial_{x^i}\otimes dx^j\otimes\partial_{x^k}$, where the components $S^i{}_{j;k}=\partial_{x^k}S^i{}_j+\Gamma_{k\ell}{}^i S^\ell{}_j-\Gamma_{kj}{}^\ell S^i{}_\ell$ to get $0=S^2{}_{2;1}=-\Gamma_{12}{}^1-\partial_{x^1}S^1{}_1$ and
$0=S^2{}_{2;2}=-\Gamma_{22}{}^1-\partial_{x^2}S^1{}_1$. This yields the additional relations given in Equation~(\ref{E3.c}).

\subsection*{Case 2} Suppose that  $S$ is not nilpotent.
The map $S\rightarrow S(P)$ is an algebra morphism which embeds $\mathcal{P}(\mathcal{M})$ in 
$M_2(\mathbb{R})$. Consequently, if
$\dim\{\mathcal{P}^0(\mathcal{M})\}=3$, then $\dim\{\mathcal{P}(\mathcal{M})\}=4$ and
$\mathcal{P}(\mathcal{M})$ contains a linearly independent nilpotent element
$S\in\mathcal{P}(\mathcal{M})$ and the argument given in Case~1 pertains. We therefore assume $\dim\{\mathcal{P}(\mathcal{M})\}=3$
and that any nilpotent element of $\mathcal{P}(\mathcal{M})$ is a constant multiple of $T$. Express
$$
S=\left(\begin{array}{cc}
S^1{}_1&S^1{}_2\\
S^2{}_1&-S^1{}_1
\end{array}\right)\,\, \text{ and }\,\, T=\left(\begin{array}{cc}0&1\\0&0\end{array}\right)\,.
$$
We compute
$$
ST=\left(\begin{array}{cc}0&S^1{}_1\\0&S^2{}_1\end{array}\right)\,.
$$
As $ST$ is parallel, $\operatorname{Tr}\{ST\}=S^2{}_1$ is constant so $S^2{}_1=c$ for some constant $c$ and
\begin{eqnarray*}
&&(ST-\textstyle\frac c2\operatorname{id})=\left(\begin{array}{cc}-\frac c2&S^1{}_1\\0&\frac c2\end{array}\right),\ 
STS=\left(\begin{array}{cc}cS^1{}_1&-(S^1{}_1)^2\\c^2&-cS^1{}_1\end{array}\right)\,.
\end{eqnarray*}
Since $\dim\{\mathcal{P}^0(\mathcal{M})\}=2$, there must exist a non-trivial real dependence relation 
of the form $0=a_1T+a_2(ST-\frac c2\operatorname{id})+a_3STS$, i.e.
$$
\left(\begin{array}{cc}0&0\\0&0\end{array}\right)=\left(\begin{array}{cc}
-\frac12 a_2c+a_3cS^1{}_1&a_1+a_2S^1{}_1-a_3(S^1{}_1)^2\\
a_3c^2&\frac 12a_2c-a_3cS^1{}_1\end{array}\right)\,.
$$
If $c\ne0$, the relation $a_3c^2=0$ implies $a_3=0$. The relation $\frac 12a_2c-a_3cS^1{}_1=0$ then implies $a_2=0$. And
then finally the relation $a_1+a_2S^1{}_1-a_3(S^1{}_1)^2=0$ implies $a_1=0$. Thus $c=0$ so we have
$$
S=\left(\begin{array}{cc}S^1{}_1&S^1{}_2\\
0&-S^1{}_1\end{array}\right),\ T=\left(\begin{array}{cc}0&1\\0&0\end{array}\right)\,.
$$
Since the eigenvalues of $S$ are constant, $S^1{}_1$ is constant as well. If $S^1{}_1=0$, then $\nabla S=0$ implies $S^1{}_2\in\mathbb{R}$ and hence $S$ and $T$ are not linearly
independent.  Thus we may assume $S^1{}_1=1$. 
We set $S^1{}_2=-2\psi$. Setting $\nabla S=0$ then shows that $\Gamma_{12}{}^1=-\partial_{x^1}\psi$ and
$\Gamma_{22}{}^1=-\partial_{x^2}\psi$ which yields, as desired, Equation~(\ref{E3.c}).

Assertion~(4) follows by a direct computation.
\end{proof}

\begin{proof}[Proof of Theorem~\ref{T1.5b-EGR}~(3)]
Let $\mathcal{M}$ be an affine surface with $\rho_s\ne0$. Assume $\mathcal{M}$ admits
a nilpotent K\"ahler structure.
Take adapted coordinates as in Lemma~\ref{L3.2} so that the Christoffel symbols are given by the
relations in Equation~(\ref{E3.b}). Then $\rho_s$ is recurrent of rank one with recurrence 1-form given by
$$
\omega=\partial_{x^1}\log\rho_{s,22}\,dx^1-(2\Gamma_{12}{}^1-\partial_{x^2}\log\rho_{s,22})\,dx^2\,.
$$

Conversely, let $\mathcal{M}$ be a recurrent affine surface with $\operatorname{Rank}\{\rho_s\}=1$. Take local coordinates $(x^1,x^2)$ so that $\ker\{\rho_s\}=\operatorname{Span}\{\partial_{x^1}\}$ (see Theorem~4.1 in \cite{W64}). If $\rho_s=\rho_{s,22}dx^2\otimes dx^2$, a straightforward calculation shows that $\nabla\rho_s=\omega\otimes\rho_s$ for some $1$-form $\omega$ if and only if $\Gamma_{11}{}^2=0$ and $\Gamma_{12}{}^2=0$.
Furthermore, one has
$$
\begin{array}{l}
\rho_{s,12}=\frac{1}{2}\left(\partial_{x^1}(\Gamma_{12}{}^1-\Gamma_{22}{}^2)-\partial_{x^2}\Gamma_{11}{}^1 \right)\,,
\quad\quad\quad
\rho_{s,11}=0\,,
\\
\noalign{\medskip}
\rho_{s,22}=\Gamma_{11}{}^1\Gamma_{22}{}^1+\Gamma_{12}{}^1\left(\Gamma_{22}{}^2-\Gamma_{12}{}^1\right)+\partial_{x^1}\Gamma_{22}{}^1-\partial_{x^2}\Gamma_{12}{}^1\,.
\end{array}
$$
Since $\rho_{s,12}=0$ one has the additional relation
$$
\Gamma_{11}{}^1=\mu(x^1)+ \int\partial_{x^1}\left(\Gamma_{12}{}^1-\Gamma_{22}{}^2\right) dx^2\,.
$$
Change the coordinates as
$(u^1,u^2)=(x^1+a(x^1),x^2)$ so that
$$
\begin{array}{ll}
du^1=(1+a')dx^1 &du^2=dx^2\\
\partial_{u^1}=(1+a')^{-1}\partial_{x^1} &\partial_{u^2}=\partial_{x^2}
\end{array}
$$
Now, one has that 
$$
{}^u\Gamma_{11}{}^2=0,\quad {}^u\Gamma_{12}{}^2=0,\quad
{}^u\Gamma_{12}{}^1={}^x\Gamma_{12}{}^1,\quad
{}^u\Gamma_{22}{}^2={}^x\Gamma_{22}{}^2
$$
and
$$
\begin{array}{l}
{}^u\Gamma_{11}{}^1=\frac{1}{1+a'(x^1)}\left({}^x\Gamma_{11}{}^1-\frac{a''(x^1)}{1+a'(x^1)}\right)
\\
\noalign{\medskip}
\phantom{{}^u\Gamma_{11}{}^1}
=\displaystyle
\frac{1}{1+a'(x^1)}\!\left(\!\mu(x^1)-\frac{a''(x^1)}{1+a'(x^1)} +\!\int\partial_{x^1}\left({}^u\Gamma_{12}{}^1-{}^u\Gamma_{22}{}^2\right) dx^2\!\right)
\\
\noalign{\medskip}
\phantom{{}^u\Gamma_{11}{}^1}
=\displaystyle
\frac{1}{1+a'(x^1)}\!\left(\!\mu(x^1)-\frac{a''(x^1)}{1+a'(x^1)}\!\right)
+\frac{1}{1+a'(x^1)}\!\int\partial_{x^1}\left({}^u\Gamma_{12}{}^1-{}^u\Gamma_{22}{}^2\right) dx^2
\\
\noalign{\medskip}
\phantom{{}^u\Gamma_{11}{}^1}
=\displaystyle
\frac{1}{1+a'(x^1)}\!\left(\!\mu(x^1)-\frac{a''(x^1)}{1+a'(x^1)}\!\right)
+\int\!\partial_{u^1}\left({}^u\Gamma_{12}{}^1-{}^u\Gamma_{22}{}^2\right) du^2
\end{array}
$$
Hence choosing $a(x^1)$ to be a solution of $a''-\mu a'-\mu=0$ one may assume that 
$$
\Gamma_{11}{}^1= \int\partial_{x^1}\left(\Gamma_{12}{}^1-\Gamma_{22}{}^2\right) dx^2\,.
$$

Let $T=T^1{}_2\,\partial_{x^1}\otimes dx^2$ be a nilpotent tensor field on $\mathcal{M}$. Then $T$ is parallel if and only if 
$$
T^1{}_{2;2}=\displaystyle
\partial_{x^2}T^1{}_2+(\Gamma_{12}{}^1-\Gamma_{22}{}^2)T^1{}_2=0\,,
\quad
\mbox{and}
\quad
T^1{}_{2;1}=\displaystyle
\partial_{x^1}T^1{}_2+T^1{}_2\Gamma_{11}{}^1
=0\,.
$$
Use the equation $T^1{}_{2;2}=0$ and set
$\displaystyle T^1{}_2=e^{\textstyle-\int(\Gamma_{12}{}^1-\Gamma_{22}{}^2)dx^2}$.
Then 
$$
\begin{array}{rcl}
T^1{}_{2;1} &=& \partial_{x^1}T^1{}_2+T^1{}_2\Gamma_{11}{}^1\\
\noalign{\medskip}
&=& e^{\textstyle-\int(\Gamma_{12}{}^1-\Gamma_{22}{}^2)dx^2}\left(
-\partial_{x^1}\int(\Gamma_{12}{}^1-\Gamma_{22}{}^2)dx^2 +\Gamma_{11}{}^1\right)=0\,,
\end{array}
$$
thus showing that $T$ is a nilpotent K\"ahler structure.
\end{proof}

\begin{observation}
Let $\mathcal{M}$ be a simply connected affine surface with $\operatorname{Rank}\{\rho_s\}=1$. 
The following conditions are equivalent:
\begin{enumerate}
\item  $\nabla\!\rho_s\!=\!\omega\otimes\rho_s$.
\item $\nabla\!\ker\{\rho_s\!\}\!\subset\!\ker\{\rho_s\!\}$.\hfill
\item $\ker\{\rho_s\!\}\!=\!\operatorname{Span}\{\! X\!\}$ and $\nabla\! X\!=\!\eta\otimes X$.
\end{enumerate}\end{observation}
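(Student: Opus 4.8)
The plan is to prove $(1)\Leftrightarrow(2)$ and $(2)\Leftrightarrow(3)$, arranged so that the three statements close into a single logical cycle. The first step I would take is to record the standing structural facts. Since $\rho_s$ has constant rank one, its pointwise kernel is a smooth rank-one distribution (a line field) on $M$. Because $M$ is simply connected, this line field is orientable and hence admits a globally defined nowhere-vanishing vector field $X$ spanning $\ker\{\rho_s\}$; this is the one genuinely global input, and the only place where the hypothesis of simple connectivity is essential. For the coordinate computations I would, following Theorem~4.1 of \cite{W64}, pass to local coordinates $(x^1,x^2)$ in which $\ker\{\rho_s\}=\operatorname{Span}\{\partial_{x^1}\}$, so that $\rho_s=\rho_{s,22}\,dx^2\otimes dx^2$ with $\rho_{s,22}\neq0$.

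For $(1)\Rightarrow(2)$ I would argue tensorially rather than in coordinates. Let $X$ be any local section of $\ker\{\rho_s\}$, so that $\rho_s(X,W)=0$ for every $W$, and differentiate this identity: for all $Y,Z$ one has $0=(\nabla_Z\rho_s)(X,Y)+\rho_s(\nabla_Z X,Y)+\rho_s(X,\nabla_Z Y)$. Recurrence gives $(\nabla_Z\rho_s)(X,Y)=\omega(Z)\rho_s(X,Y)=0$, while $\rho_s(X,\nabla_Z Y)=0$ because $X\in\ker\{\rho_s\}$; hence $\rho_s(\nabla_Z X,Y)=0$ for all $Y$, i.e.\ $\nabla_Z X\in\ker\{\rho_s\}$, which is exactly $(2)$. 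The equivalence $(2)\Leftrightarrow(3)$ is then essentially formal: given $(2)$, the global nowhere-vanishing $X$ produced above satisfies $\nabla_Z X\in\ker\{\rho_s\}=\operatorname{Span}\{X\}$, so $\nabla_Z X=\eta(Z)X$ defines a smooth $1$-form $\eta$ (smoothness being clear since $X$ has no zeros), giving $(3)$; conversely $(3)$ immediately yields $\nabla_Z X=\eta(Z)X\in\operatorname{Span}\{X\}$, so the line field is parallel and $(2)$ holds.

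To close the cycle I would prove $(2)\Rightarrow(1)$ by a direct computation in the coordinates above. Parallelism of the line field $\operatorname{Span}\{\partial_{x^1}\}$ is equivalent to $\Gamma_{11}{}^2=\Gamma_{12}{}^2=0$. Writing $(\nabla_{\partial_{x^k}}\rho_s)_{ij}=\partial_{x^k}\rho_{s,ij}-\Gamma_{ki}{}^\ell\rho_{s,\ell j}-\Gamma_{kj}{}^\ell\rho_{s,\ell i}$ and using that $\rho_{s,\ell m}=0$ unless $\ell=m=2$, one finds that the $(1,1)$ component vanishes identically while the $(1,2)$ component equals $-\Gamma_{k1}{}^2\rho_{s,22}$, which vanishes precisely under $(2)$; the only surviving component is then $(\nabla_{\partial_{x^k}}\rho_s)_{22}=\partial_{x^k}\rho_{s,22}-2\Gamma_{k2}{}^2\rho_{s,22}$. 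Thus under $(2)$ the tensor $\nabla_{\partial_{x^k}}\rho_s$ is pointwise proportional to $\rho_s$, so $\rho_s$ is recurrent with $\omega_k=\partial_{x^k}\log|\rho_{s,22}|-2\Gamma_{k2}{}^2$, establishing $(1)$. None of these steps presents a serious obstacle; the only point requiring real care is the global existence of the spanning vector field $X$ in $(3)$, which is exactly where simple connectivity enters.
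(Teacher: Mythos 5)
Your proof is correct, and its computational core coincides with the paper's: in Wong's adapted coordinates where $\rho_s=\rho_{s,22}\,dx^2\otimes dx^2$, both the recurrence condition (1) and the parallelism condition (2) reduce to the single Christoffel condition $\Gamma_{11}{}^2=\Gamma_{12}{}^2=0$. The paper's (very terse) proof is precisely this hub argument: it asserts that each of the three conditions is equivalent to $\Gamma_{11}{}^2=\Gamma_{12}{}^2=0$ and leaves every verification as ``a straightforward calculation.'' Where you differ is in organization and in two genuine refinements. First, you prove $(1)\Rightarrow(2)$ invariantly, by differentiating the identity $\rho_s(X,Y)=0$ along the kernel and using symmetry of $\rho_s$ together with recurrence; this avoids coordinates for that implication and makes clear it needs no rank or connectivity hypotheses. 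Second, you make explicit the one global point the paper glosses over: condition (3) asks for a single vector field $X$ with $\ker\{\rho_s\}=\operatorname{Span}\{X\}$, and the existence of a nowhere-vanishing section of the kernel line field is exactly where simple connectedness enters (a real line bundle over a simply connected manifold is trivial); the paper's purely local computation never addresses this. One small addition would round out your argument: your coordinate computation for $(2)\Rightarrow(1)$ produces the recurrence form $\omega$ only locally, but since $\rho_s$ is nowhere vanishing (having rank one everywhere) the form $\omega$ is uniquely determined at each point, so the locally defined forms agree on overlaps and patch to a global $\omega$ — a remark the paper also needs implicitly.
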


\begin{proof}
Assume that $\operatorname{Rank}\{\rho_s\}=1$. Choose local coordinates so that the symmetric Ricci tensor has the form
$\rho_s=\rho_{s,22}dx^2\otimes dx^2$. A straightforward calculation shows that any of the 
 conditions of the observation is equivalent to the condition $\Gamma_{11}{}^2=\Gamma_{12}{}^2=0$.
\end{proof}

Consequently, if the $\rho_s$ has rank one and if
$\ker(\rho_s)$ is  parallel, then the affine surface admits a nilpotent K\"ahler structure (see, for example \cite{Op1}).

\begin{remark}\label{EGR-3.5}
\rm
An affine surface $\mathcal{M}$ is projectively flat if and only if both $\rho$ and $\nabla\rho$ are totally symmetric. 
Projective flatness is a specially relevant condition when considering Riemannian extensions, since 
$(T^*M,g_{\nabla,\phi,T})$ as in Theorem~\ref{T1.1} with $\phi=0$ and $T=0$ is locally conformally flat if and only if 
$\mathcal{M}$ is projectively flat (see \cite{EGR-Afifi}).
Let $\mathcal{M}$ be given by the relations in Equation~\eqref{E3.b}. Then $\mathcal{M}$ is projectively flat if and only if 
$\partial_{x^1}\Gamma_{22}{}^2=0$ (equivalently, $\rho$ is symmetric) and $\partial_{x^1,x^1}\Gamma_{22}{}^1=0$
(equivalently,$\nabla\rho=\omega\otimes\rho$ with $\omega(\ker\{\rho\})=0$). 
A straightforward calculation shows that the modified Riemannian extension
$(T^*M,g_{\nabla,\phi,T})$ in Theorem~\ref{T1.1} with $\phi=0$ is anti-self-dual if and only if $(M,\nabla)$ is projectively flat.
\end{remark}

\subsection{K\"ahler structures}\label{S3.3}

\begin{lemma}\label{EGR-L3.6} 
Let $(M,\nabla)$ be an affine surface which is not flat.
\begin{enumerate}
\item If $\mathcal{M}$ admits a K\"ahler structure, then there is a coordinate atlas so
\begin{equation}\label{E3.d}
\Gamma_{11}{}^1=\Gamma_{12}{}^2=-\Gamma_{22}{}^1\,,\quad
\Gamma_{11}{}^2=-\Gamma_{12}{}^1=-\Gamma_{22}{}^2\,.
\end{equation}
\item If Equation~(\ref{E3.d}) holds, then
$$
\rho_s=(\partial_{x^2}\Gamma_{11}{}^2-\partial_{x^1}\Gamma_{11}{}^1)\left(\begin{array}{cc}1&0\\0&1\end{array}\right)\text{ and }
T=\left(\begin{array}{cc}0&-1\\1&0\end{array}\right)\in\mathcal{P}^0(\mathcal{M})\,.
$$
\item If Equation~(\ref{E3.d}) holds and if $\dim\{\mathcal{P}^0(\mathcal{M})\}\ge2$, there exists smooth $\psi$ so 
\begin{equation}\label{E3.e}
\Gamma_{11}{}^1=\frac{1}{2}\,\partial_{x^2}\psi\,\,\text{ and }\,\,\Gamma_{11}{}^2=\frac{1}{2}\,\partial_{x^1}\psi\,.
\end{equation}
\item If Equation~(\ref{E3.d}) and (\ref{E3.e}) hold, then $\rho=\frac{1}{2}(\partial_{x^1}\partial_{x^1}+\partial_{x^2}\partial_{x^2})\psi\,
dx^1\wedge dx^2$ and
\begin{eqnarray*}
&&\mathcal{P}^0(\mathcal{M})=\operatorname{Span}\left\{\!\left(\!\begin{array}{cc}0&-1\\1&0\end{array}\!\right), 
\left(\!\begin{array}{cc}\cos\psi&-\sin\psi\\-\sin\psi&-\cos\psi\end{array}\!\right),
\left(\!\begin{array}{cc}\sin\psi&\cos\psi\\\cos\psi&-\sin\psi\end{array}\!\right)\!\right\}.
\end{eqnarray*}
\end{enumerate}
\end{lemma}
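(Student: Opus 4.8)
The plan is to mirror the proof of Lemma~\ref{L3.2}, replacing the nilpotent normal form by the complex normal form supplied by Lemma~\ref{L2.1}(2). For Assertion~(1), a K\"ahler structure is a parallel $T\in\mathcal{P}^0(\mathcal{M})$ with $T^2=-\operatorname{id}$, i.e. $\det\{T\}=1$, so by Lemma~\ref{L2.1}(2) I would pass to coordinates in which $T=\partial_{x^2}\otimes dx^1-\partial_{x^1}\otimes dx^2$, that is $T=\left(\begin{smallmatrix}0&-1\\1&0\end{smallmatrix}\right)$ in the coordinate frame. Writing out the eight scalar equations $T^i{}_{j;k}=0$ and using that the entries of $T$ are constant, these collapse to linear relations among the Christoffel symbols which assemble into exactly Equation~(\ref{E3.d}). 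The converse (that Equation~(\ref{E3.d}) forces $\nabla T=0$) is the same computation read backwards, and it simultaneously proves the assertion $T\in\mathcal{P}^0(\mathcal{M})$ in Assertion~(2).

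For the remaining part of Assertion~(2) and for Assertion~(4), I would substitute Equation~(\ref{E3.d}) (resp. Equations~(\ref{E3.d}) and~(\ref{E3.e})) into $\rho(X,Y)=\operatorname{Tr}\{Z\mapsto R(Z,X)Y\}$ and compute; these are direct, if slightly lengthy, curvature computations yielding $\rho_s=(\partial_{x^2}\Gamma_{11}{}^2-\partial_{x^1}\Gamma_{11}{}^1)\operatorname{id}$ and $\rho=\tfrac12(\partial_{x^1}\partial_{x^1}+\partial_{x^2}\partial_{x^2})\psi\,dx^1\wedge dx^2$. For the explicit basis in Assertion~(4) I would verify directly, using Equation~(\ref{E3.e}), that $S:=\left(\begin{smallmatrix}\cos\psi&-\sin\psi\\-\sin\psi&-\cos\psi\end{smallmatrix}\right)$ is parallel (the second diagonal component of $\nabla S$ being forced to vanish by $\operatorname{Tr}(\nabla S)=\nabla\operatorname{Tr}(S)=0$), observe that $TS=\left(\begin{smallmatrix}\sin\psi&\cos\psi\\\cos\psi&-\sin\psi\end{smallmatrix}\right)$ is then parallel as a product of parallel tensors, and note that $T$, $S$, $TS$ are pointwise linearly independent (the first is antisymmetric, and $S$, $TS$ are orthonormal in the symmetric trace free plane). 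Since $\dim\{\mathcal{P}^0(\mathcal{M})\}\le3$ by Lemma~\ref{L1.4}, they form a basis.

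The substantive step is Assertion~(3). Here I would use that $S\mapsto S(P)$ embeds $\mathcal{P}(\mathcal{M})$ as a unital subalgebra of $M_2(\mathbb{R})$ containing $T$ with $T^2=-\operatorname{id}$. Given $S\in\mathcal{P}^0(\mathcal{M})$ independent of $T$, the product $ST$ is again parallel, so $\operatorname{Tr}(ST)$ is constant. Expanding $S$ along the basis $\{\left(\begin{smallmatrix}1&0\\0&-1\end{smallmatrix}\right),\left(\begin{smallmatrix}0&1\\1&0\end{smallmatrix}\right),T\}$ of the trace free matrices and using $\operatorname{Tr}(ST)=-2\,(\text{coefficient of }T)$, I may subtract a constant multiple of $T$ and assume $S$ is symmetric and trace free, hence still independent of $T$. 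By Lemma~\ref{L1.4} its eigenvalues $\pm\sqrt{-\det S}$ are constant, so after rescaling $-\det S=1$, and working on a coordinate chart I may write $S=\left(\begin{smallmatrix}\cos\psi&-\sin\psi\\-\sin\psi&-\cos\psi\end{smallmatrix}\right)$ for a smooth branch $\psi$. Finally I impose $\nabla S=0$ against Equation~(\ref{E3.d}): the equations $S^1{}_{1;1}=0$ and $S^1{}_{2;1}=0$ read $\sin\psi\,(2\Gamma_{11}{}^2-\partial_{x^1}\psi)=0$ and $\cos\psi\,(2\Gamma_{11}{}^2-\partial_{x^1}\psi)=0$, which together force $\Gamma_{11}{}^2=\tfrac12\partial_{x^1}\psi$, while the analogous equations in the $x^2$-direction give $\Gamma_{11}{}^1=\tfrac12\partial_{x^2}\psi$; these are precisely Equation~(\ref{E3.e}).

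I expect the main obstacle to be the bookkeeping in Assertion~(3): correctly using the algebra structure to normalise $S$ to the symmetric, circle valued form (the step that, exactly as in the nilpotent case, also shows $\dim\{\mathcal{P}^0(\mathcal{M})\}=2$ cannot occur, since $\{T,S,TS\}$ then gives three independent elements), and organising the trigonometric covariant derivative computation so that the common factors $\sin\psi$ and $\cos\psi$, which cannot vanish simultaneously, can be cancelled to isolate Equation~(\ref{E3.e}). The curvature computations underlying Assertions~(2) and~(4) are lengthy but routine.
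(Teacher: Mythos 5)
Your proposal is correct, and Assertions (1), (2), (4) are handled exactly as in the paper (Lemma~\ref{L2.1}(2) to normalize $T$, then direct computation). For the substantive Assertion~(3), however, you take a genuinely different route. The paper considers $\det(S+\varepsilon T)=\varepsilon^2+\varepsilon(S^2{}_1-S^1{}_2)-(S^1{}_1)^2-S^2{}_1S^1{}_2$ and observes that the discriminant $(S^1{}_2+S^2{}_1)^2+4(S^1{}_1)^2$ is a sum of squares, so a real root $\varepsilon$ exists; then $S+\varepsilon T$ is a non-trivial nilpotent parallel tensor, Lemma~\ref{L3.2} forces $\rho_s=0$, and since by Assertion~(2) the vanishing of $\rho_s$ is exactly the closedness of $2(\Gamma_{11}{}^2\,dx^1+\Gamma_{11}{}^1\,dx^2)$, the function $\psi$ of Equation~(\ref{E3.e}) is produced by the Poincar\'e lemma. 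You instead normalize $S$ algebraically: constancy of $\operatorname{Tr}(ST)$ lets you subtract the $T$-component, leaving a symmetric trace-free parallel tensor whose eigenvalues are constant by Lemma~\ref{L1.4}, which after rescaling and a local circle lift has the rotation form $\left(\begin{smallmatrix}\cos\psi&-\sin\psi\\-\sin\psi&-\cos\psi\end{smallmatrix}\right)$; then $\nabla S=0$ yields Equation~(\ref{E3.e}) pointwise since $\sin\psi$ and $\cos\psi$ cannot vanish simultaneously (I checked the covariant derivative components; your displayed equations are right). Notably, your normalization is precisely the device the paper itself uses in the para-K\"ahler case (Lemma~\ref{EGR-L3.7}), so your argument unifies the two cases, is self-contained (no appeal to Lemma~\ref{L3.2}), gives $\psi$ an explicit geometric meaning as the rotation angle of the parallel symmetric tensor, and delivers the basis of Assertion~(4) and the exclusion $\dim\{\mathcal{P}^0(\mathcal{M})\}\ne2$ in the same stroke; the paper's route is shorter because it recycles the nilpotent analysis already done. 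One small point of care in your version: the lift $\psi$ exists only locally (on each chart), but that is all Equation~(\ref{E3.e}) asserts, so there is no gap.
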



\begin{proof} Suppose $T\in\mathcal{P}^0(\mathcal{M})$ satisfies $T^2=-\operatorname{id}$. By Lemma~\ref{L2.1},
we can choose local coordinates so $T=\partial_{x^2}\otimes dx^1-\partial_{x^1}\otimes dx^2$. Setting $\nabla T=0$ yields the relations:
\begin{eqnarray*}
\nabla_{\partial x^1} T=0:&\left(\begin{array}{cc}\Gamma_{11}{}^2+\Gamma_{12}{}^1&-\Gamma_{11}{}^1+\Gamma_{12}{}^2\\
-\Gamma_{11}{}^1+\Gamma_{12}{}^2&-\Gamma_{11}{}^2-\Gamma_{12}{}^1\end{array}\right)&=\left(\begin{array}{cc}0&0\\0&0\end{array}\right)\\
\nabla_{\partial x^1} T=0:&\left(\begin{array}{cc}\Gamma_{12}{}^2+\Gamma_{22}{}^1&-\Gamma_{12}{}^1+\Gamma_{22}{}^2\\
-\Gamma_{12}{}^1+\Gamma_{22}{}^2&-\Gamma_{12}{}^2-\Gamma_{22}{}^1\end{array}\right)&=\left(\begin{array}{cc}0&0\\0&0\end{array}\right)\,.
\end{eqnarray*}
These relations establish Equation~(\ref{E3.d}). A direct computation establishes Assertion~(2).
Suppose $\dim\{\mathcal{P}(\mathcal{M})\}\ge3$. Choose $S\in\mathcal{P}^0(\mathcal{M})$ to be linearly independent of $T$. Express
$$
S=\left(\begin{array}{cc}S^1{}_1&S^1{}_2\\
S^2{}_1&-S^1{}_1\end{array}\right),\quad
T=\left(\begin{array}{cc}0&-1\\1&0\end{array}\right),\quad
S+\varepsilon T=\left(\begin{array}{cc}S^1{}_1&S^1{}_2-\varepsilon\\S^2{}_1+\varepsilon&-S^1{}_1\end{array}\right)\,.
$$
We have
$\det(S+\varepsilon T)=\varepsilon^2+\varepsilon(S^2{}_1-S^1{}_2)-(S^1{}_1)^2-S^2{}_1S^1{}_2$.
We use the quadratic formula to solve the equation $\det(S+\varepsilon T)=0$ setting:
\begin{eqnarray*}
\varepsilon&=&\textstyle\frac12\left\{(S^1{}_2-S^2{}_1)\pm\sqrt{(S^1{}_2+S^2{}_1)^2
+4(S^1{}_1)^2}\right\}\,.
\end{eqnarray*}
Since $S$ and $T$ are assumed linearly independent, $S+\varepsilon T$ is a non-trivial
nilpotent element. We can then apply Lemma~\ref{L3.2} and  Assertion~(2) to see $\rho_s=0$ and
derive the relations of Equation~(\ref{E3.e}). This proves Assertion~(3);
Assertion~(4) follows by a direct computation.
\end{proof}

\begin{proof}[Proof of Theorem~\ref{T1.5b-EGR}~(1)]
Let $\mathcal{M}$ be an affine surface with $\rho_s\neq 0$ admitting a K\"ahler structure. Take local coordinates as in Lemma~\ref{EGR-L3.6}. Then the relations in Equation~(\ref{E3.d}) show that $\det\{\rho_s\}>0$ and $\rho_s$ recurrent, i.e., $\nabla\rho_s=\omega\otimes\rho_s$ with 
$$
\omega=-(2\Gamma_{11}{}^1-\partial_{x^1}\log\rho_{s,11})\, dx^1-(2\Gamma_{11}{}^2-\partial_{x^2}\log\rho_{s,22})\, dx^2\,.
$$
Conversely, if $\rho_s$ is recurrent and $\det\{\rho_s\}>0$, there exist local coordinates $(x^1,x^2)$ so that $\rho_s=\psi(x^1,x^2)\,(dx^1\otimes dx^1+dx^2\otimes dx^2)$, see for example Theorem 3.2 in \cite{W64}. Now a straightforward calculation using $\nabla\rho_s=\omega\otimes\rho_s$ gives the relations of Equation~\eqref{E3.d} and thus Assertion~(2) in Lemma~\ref{EGR-L3.6} shows that $\mathcal{M}$ is K\"ahler.
\end{proof}

\subsection{Para-K\"ahler structures}\label{S3.4}

\begin{lemma}\label{EGR-L3.7} 
Let $(M,\nabla)$ be an affine surface which is not flat.
\begin{enumerate}
\item If $\mathcal{M}$ admits a para-K\"ahler structure, then there is a coordinate atlas so
\begin{equation}\label{E3.f}
\Gamma_{11}{}^2=0,\ \Gamma_{12}{}^1=0,\ \Gamma_{12}{}^2=0,\ \Gamma_{22}{}^1=0\,.
\end{equation}
\item If Equation~(\ref{E3.f}) holds, then
$$
T=\left(\begin{array}{cc}1&0\\0&-1\end{array}\right)\in\mathcal{P}^0(\mathcal{M})\text{ and }
\rho_s=-\frac12(\partial_{x^2}\Gamma_{11}{}^1+\partial_{x^1}\Gamma_{22}{}^2)\left(\begin{array}{cc}0&1\\1&0
\end{array}\right)\,.
$$
\item If Equation~(\ref{E3.f}) holds and if $\dim\{\mathcal{P}^0(\mathcal{M})\}\ge2$, then there exists a locally
defined smooth function $\theta$ so
\begin{equation}\label{E3.g}
\Gamma_{11}{}^1=\partial_{x^1}\theta\,\,\text{ and }\,\,\Gamma_{22}{}^2=-\partial_{x^2}\theta\,.
\end{equation}
\item If Equations~(\ref{E3.f}) and (\ref{E3.g}) hold, then $\rho=\partial_{x^1}\partial_{x^2}\theta\, dx^1\wedge dx^2$ and
$$
\mathcal{P}^0(\mathcal{M})=\operatorname{Span}\left\{
\left(\begin{array}{cc}1&0\\0&-1\end{array}\right),
e^{-\theta}\left(\begin{array}{cc}0&1\\0&0\end{array}\right),
e^\theta\left(\begin{array}{cc}0&0\\1&0\end{array}\right)\right\}\,.
$$
\end{enumerate}\end{lemma}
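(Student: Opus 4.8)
The plan is to follow the template already established for the nilpotent case in Lemma~\ref{L3.2} and the K\"ahler case in Lemma~\ref{EGR-L3.6}, using that a para-K\"ahler structure is exactly a trace free parallel $T$ with $T^2=\operatorname{id}$. For Assertion~(1) I would start from such a $T$ and invoke Lemma~\ref{L2.1}~(3) to pass to coordinates in which $T=\operatorname{diag}(1,-1)$; expanding $\nabla_{\partial_{x^1}}T=0$ and $\nabla_{\partial_{x^2}}T=0$ then yields Equation~(\ref{E3.f}), because the diagonal entries of each $\nabla_{\partial_{x^k}}T$ vanish identically while the off-diagonal entries reduce to $\pm2$ times exactly the four symbols $\Gamma_{11}{}^2,\Gamma_{12}{}^1,\Gamma_{12}{}^2,\Gamma_{22}{}^1$. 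The converse and Assertion~(2) are then a direct computation: under~(\ref{E3.f}) the only surviving Christoffel symbols are $\Gamma_{11}{}^1$ and $\Gamma_{22}{}^2$, so $\operatorname{diag}(1,-1)$ is manifestly parallel and the Ricci computation collapses to $\rho_s=-\frac12(\partial_{x^2}\Gamma_{11}{}^1+\partial_{x^1}\Gamma_{22}{}^2)(dx^1\otimes dx^2+dx^2\otimes dx^1)$.

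The substantive point is Assertion~(3). Given~(\ref{E3.f}) and a second $S\in\mathcal{P}^0(\mathcal{M})$ independent of $T$, I would first observe that $TS\in\mathcal{P}(\mathcal{M})$ has constant trace $2S^1{}_1$, so $S^1{}_1$ is constant; subtracting $S^1{}_1\,T$ we may assume $S=\left(\begin{smallmatrix}0&S^1{}_2\\S^2{}_1&0\end{smallmatrix}\right)$ is off-diagonal and still nonzero. Writing $\nabla S=0$ in the coordinates of~(\ref{E3.f}), the equations governing $S^1{}_2$ and $S^2{}_1$ decouple completely, namely $\partial_{x^1}S^1{}_2=-\Gamma_{11}{}^1 S^1{}_2$, $\partial_{x^2}S^1{}_2=\Gamma_{22}{}^2 S^1{}_2$, together with the transposed system $\partial_{x^1}S^2{}_1=\Gamma_{11}{}^1 S^2{}_1$, $\partial_{x^2}S^2{}_1=-\Gamma_{22}{}^2 S^2{}_1$. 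Consequently each off-diagonal summand of $S$ is separately parallel, so whichever is nonzero is a nontrivial \emph{nilpotent} element of $\mathcal{P}^0(\mathcal{M})$ and Lemma~\ref{L3.2} forces $\rho_s=0$. Equivalently and more directly, a nonzero solution of either first-order system exists only if its integrability condition $\partial_{x^2}\Gamma_{11}{}^1+\partial_{x^1}\Gamma_{22}{}^2=0$ holds, which by Assertion~(2) is precisely $\rho_s=0$; this same vanishing is exactly what permits a potential $\theta$ with $\Gamma_{11}{}^1=\partial_{x^1}\theta$ and $\Gamma_{22}{}^2=-\partial_{x^2}\theta$ to be introduced (globally, since $M$ is simply connected), which is Equation~(\ref{E3.g}).

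Assertion~(4) is then a verification: feeding~(\ref{E3.f}) and~(\ref{E3.g}) into the parallel-transport equations confirms that $\operatorname{diag}(1,-1)$, $e^{-\theta}\left(\begin{smallmatrix}0&1\\0&0\end{smallmatrix}\right)$ and $e^{\theta}\left(\begin{smallmatrix}0&0\\1&0\end{smallmatrix}\right)$ all satisfy $\nabla(\cdot)=0$, where $e^{-\theta}$ and $e^{\theta}$ are just the explicit solutions of the two decoupled systems above; these three endomorphisms are pointwise linearly independent, and since $\dim\{\mathcal{P}^0(\mathcal{M})\}\le3$ by Lemma~\ref{L1.4} they must span, while the residual curvature computation gives $\rho=\partial_{x^1}\partial_{x^2}\theta\,dx^1\wedge dx^2$. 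I expect the one genuine obstacle to be the nilpotency step in Assertion~(3): in the K\"ahler case a nilpotent element is produced merely by perturbing, because $\det(S+\varepsilon T)$ is a quadratic in $\varepsilon$ with discriminant $(S^1{}_2+S^2{}_1)^2+4(S^1{}_1)^2\ge0$ and hence a real root, whereas here $T^2=\operatorname{id}$ gives $\det(S+\varepsilon T)=-(S^1{}_1+\varepsilon)^2-S^1{}_2 S^2{}_1$, which admits a real root only when $S^1{}_2 S^2{}_1\le0$; that perturbation therefore fails, and one must instead exploit the decoupling of the parallel equations (equivalently, pass through the integrability condition) to reach $\rho_s=0$.
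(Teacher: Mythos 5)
Your proposal is correct and follows essentially the same route as the paper: the same coordinate normalization via Lemma~\ref{L2.1}, the same use of the constancy of $\operatorname{Tr}\{ST\}$ to reduce $S$ to off-diagonal form, and the same extraction of a non-trivial nilpotent parallel element (your two separately parallel off-diagonal summands are exactly the paper's $\widehat{S}\pm\widehat{S}T$), followed by an appeal to Lemma~\ref{L3.2} to force $\rho_s=0$ and local integration of the resulting closed $1$-form to produce $\theta$. Your closing remark about why the perturbation $\det(S+\varepsilon T)=0$ used in the K\"ahler case fails here is precisely the reason the paper's proof switches to this construction, so it accurately identifies the one genuinely new step.
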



\begin{proof} Let $T\in\mathcal{P}^0(\mathcal{M})$ satisfy $T^2=\operatorname{id}$.
We apply Lemma~\ref{L2.1} to see we may choose local coordinates so
$T=\partial_{x^1}\otimes dx^1-\partial_{x^2}\otimes dx^2$. Setting $\nabla T=0$ yields the relations
\begin{eqnarray*}
\nabla_{\partial x^1} T=0:&\left(\begin{array}{cc}0&-2\Gamma_{12}{}^1\\2\Gamma_{11}{}^2&0\end{array}\right)&=\left(\begin{array}{cc}0&0\\0&0\end{array}\right)\\
\nabla_{\partial x^1} T=0:&\left(\begin{array}{cc}0&-2\Gamma_{22}{}^1\\2\Gamma_{12}{}^2&0\end{array}\right)&=\left(\begin{array}{cc}0&0\\0&0\end{array}\right)\,.
\end{eqnarray*}
This yields Equation~(\ref{E3.f}). 
Suppose $\dim\{\mathcal{P}^0(\mathcal{M})\}\ge2$. If $\dim\{\mathcal{P}^0(\mathcal{M})\}=3$,
then $\mathcal{P}^0(\mathcal{M})$ contains a nilpotent element and we may apply Lemma~\ref{L3.2} to conclude
$\rho_s=0$ and Assertion~(2) gives the relations of Equation~(\ref{E3.g}) for suitably chosen $\theta$.
We therefore suppose $\dim\{\mathcal{P}^0(\mathcal{M})\}=2$. Let $\{S,T\}$ be linearly independent elements of $\mathcal{P}^0(\mathcal{M})$. Expand
$$
S=\left(\begin{array}{cc}S^1{}_1&S^1{}_2\\
S^2{}_1&-S^1{}_1\end{array}\right),\quad
T=\left(\begin{array}{cc}1&0\\0&-1\end{array}\right),\quad
ST=\left(\begin{array}{cc}S^1{}_1&-S^1{}_2\\S^2{}_1&S^1{}_1\end{array}\right)\,.
$$
Since $\operatorname{Tr}(ST)=2S^1{}_1$ is constant, we obtain $S^1{}_1$ is constant. Define $\widehat{S}=S-S^1{}_1\,T$. Then $\widehat{S}$ is parallel and $\widehat{S}\neq 0$ since $S$ and $T$ are linearly independent. We then have
$$
\widehat{S}=\left(\begin{array}{cc}0&S^1{}_2\\
S^2{}_1&0\end{array}\right),\quad
T=\left(\begin{array}{cc}1&0\\0&-1\end{array}\right),\quad
\widehat{S}T=\left(\begin{array}{cc}0&-S^1{}_2\\
S^2{}_1&0\end{array}\right)\,.
$$
Since $\widehat{S}\pm \widehat{S}T$ are nilpotent and not both are zero, $\mathcal{P}(\mathcal{M})$ contains a non-trivial
nilpotent element and we can use Lemma~\ref{L3.2} to conclude $\rho_s=0$ and (2) establishes Assertion~(3).
Assertion~(4) follows by a direct computation.
\end{proof}

\begin{proof}[Proof of Theorem~\ref{T1.5b-EGR}~(2)]
Let $\mathcal{M}$ be an affine surface with $\rho_s\neq 0$ admitting a para-K\"ahler structure. 
Take local coordinates as in Lemma~\ref{EGR-L3.7}. Then the relations in Equation~(\ref{E3.f}) 
show that $\det\{\rho_s\}<0$ and $\rho_s$ recurrent, i.e., $\nabla\rho_s=\omega\otimes\rho_s$ with
$$
\omega=-(\Gamma_{11}{}^1-\partial_{x^1}\log\rho_{s,12})\, dx^1-(\Gamma_{22}{}^2-\partial_{x^2}\log\rho_{s,12})\, dx^2\,.
$$
Conversely, if $\rho_s$ is recurrent and $\det\{\rho_s\}<0$, there exist local coordinates 
$(x^1,x^2)$ so that $\rho_s=\psi(x^1,x^2)\,(dx^1\otimes dx^2+dx^2\otimes dx^1)$, see for 
example Theorem 3.2 in \cite{W64}. Now a straightforward calculation using 
$\nabla\rho_s=\omega\otimes\rho_s$ gives the relations of Equation~\eqref{E3.f} and thus 
Assertion~(2) in Lemma~\ref{EGR-L3.7} shows that $\mathcal{M}$ admits a para-K\"ahler structure.
\end{proof}

\section{Type~$\mathcal{A}$ geometry: the proof of Theorem~\ref{T1.9}}\label{S4}

The Ricci tensor of any Type~$\mathcal{A}$ homogeneous model is symmetric. 
Furthermore, the Ricci tensor is recurrent if and only if it is of rank one (see Lemma 2.3 in \cite{BGG18}).
Therefore Theorem~\ref{T1.5b-EGR}~(3) shows that a Type~$\mathcal{A}$ homogeneous surface 
admits a parallel tensor field  if and only if the Ricci tensor is of rank one, in which case it is a nilpotent K\"ahler surface.

The constructions in Theorem \ref{T1.2} and Theorem \ref{T1.3} make an explicit use of the 
nilpotent K\"ahler structure. Therefore, it is important to have concrete expressions.
We begin with a useful algebraic fact that we will use to explicitely determine all
nilpotent K\"ahler structures on Type~$\mathcal{A}$ homogeneous models.

\begin{lemma}\label{L4.1}
Let $\nabla$ be a Type~$\mathcal{A}$ connection on $M=\mathbb{R}^2$ which is not flat and which satisfies $\mathcal{P}^0(\mathcal{M})\ne\{0\}$.
There exists $(a_1,a_2)\in\mathbb{R}^2$ 
and $0\ne\mathfrak{t}\in M_2^0(\mathbb{R})$ so that $\mathcal{P}^0(\mathcal{M})=e^{a_1x^1+a_2x^2}\mathfrak{t}
\cdot\mathbb{R}$.
\end{lemma}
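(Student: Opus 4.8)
The plan is to exploit that in a Type~$\mathcal{A}$ chart the Christoffel symbols are constant, so the equation $\nabla T=0$ becomes a linear first order system with constant coefficients. Writing $T$ as the matrix-valued function $\mathbf{T}(x)=(T^i{}_j(x))$ and introducing the constant matrices $\Gamma_k$ with entries $(\Gamma_k)^i{}_\ell=\Gamma_{k\ell}{}^i$, the defining relations of $\mathcal{P}(\mathcal{M})$ read
\[
\partial_{x^k}\mathbf{T}=\mathbf{T}\Gamma_k-\Gamma_k\mathbf{T}=[\mathbf{T},\Gamma_k],\qquad k=1,2 .
\]
First I would extract the integrability condition by differentiating cross-wise: using $\partial_{x^2}\partial_{x^1}\mathbf{T}=\partial_{x^1}\partial_{x^2}\mathbf{T}$ together with the Jacobi identity gives $[\mathbf{T}(x),\mathcal{R}]=0$ for all $x$, where $\mathcal{R}:=[\Gamma_1,\Gamma_2]$. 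Since the Christoffel symbols are constant, $\mathcal{R}$ is precisely the constant curvature endomorphism $R(\partial_{x^1},\partial_{x^2})$; as $\mathcal{M}$ is not flat, $\mathcal{R}\neq0$, and being a commutator it is trace free.

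The second step is purely algebraic. A nonzero trace free $2\times2$ matrix is never a scalar, hence is non-derogatory, so its centralizer in $M_2(\mathbb{R})$ is the two dimensional algebra $\operatorname{Span}\{\operatorname{Id},\mathcal{R}\}$. Intersecting with the trace free matrices $M_2^0(\mathbb{R})$ leaves only the line $\mathbb{R}\,\mathcal{R}$. Therefore every solution must satisfy $\mathbf{T}(x)=q(x)\,\mathcal{R}$ for some scalar function $q$. By Lemma~\ref{L1.4} a nonzero parallel tensor vanishes nowhere, so $q$ is nowhere zero.

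Finally I would feed $\mathbf{T}=q\,\mathcal{R}$ back into the system. This yields $(\partial_{x^k}q)\,\mathcal{R}=-q\,[\Gamma_k,\mathcal{R}]$, and since $q\neq0$ and $\mathcal{R}\neq0$ the \emph{constant} matrix $[\Gamma_k,\mathcal{R}]$ must be a fixed multiple of $\mathcal{R}$, say $[\Gamma_k,\mathcal{R}]=-a_k\,\mathcal{R}$ with $a_k\in\mathbb{R}$. Then $\partial_{x^k}\log|q|=a_k$, so $q(x)=q(0)\,e^{a_1x^1+a_2x^2}$ and hence $\mathbf{T}(x)=q(0)\,e^{a_1x^1+a_2x^2}\mathcal{R}$. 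Taking $\mathfrak{t}:=\mathcal{R}$ and absorbing the constant gives $\mathcal{P}^0(\mathcal{M})=e^{a_1x^1+a_2x^2}\mathfrak{t}\cdot\mathbb{R}$, as claimed; since the exponent $(a_1,a_2)$ and the direction $\mathfrak{t}$ are forced to be the same for every element, this simultaneously records that $\dim\{\mathcal{P}^0(\mathcal{M})\}=1$.

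The main obstacle I anticipate is the first step: correctly identifying that the integrability condition forces commutation with the curvature matrix $\mathcal{R}=[\Gamma_1,\Gamma_2]$, and recognizing $\mathcal{R}$ as the (nonzero, trace free) curvature operator. Once the centralizer of $\mathcal{R}$ is pinned down to $\operatorname{Span}\{\operatorname{Id},\mathcal{R}\}$, the exponential form follows by a short integration, with the constancy of $a_k$ being the only point that needs the constancy of the Christoffel symbols.
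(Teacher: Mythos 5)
Your proof is correct, and it takes a genuinely different route from the paper's. You work directly with the constant-coefficient system $\partial_{x^k}\mathbf{T}=[\mathbf{T},\Gamma_k]$: equality of mixed partials plus the Jacobi identity forces $[\mathbf{T}(x),\mathcal{R}]=0$ with $\mathcal{R}=[\Gamma_1,\Gamma_2]=R(\partial_{x^1},\partial_{x^2})\ne 0$ (this is the Ricci identity $R(X,Y)\cdot T=0$ for parallel $T$, made concrete); the centralizer of the nonzero trace-free, hence non-scalar, matrix $\mathcal{R}$ is $\operatorname{Span}\{\operatorname{Id},\mathcal{R}\}$, so $\mathbf{T}=q\,\mathcal{R}$; and the integration step is sound because Lemma~\ref{L1.4} makes $q$ nowhere zero while constancy of $[\Gamma_k,\mathcal{R}]$ makes $a_k:=\partial_{x^k}q/q$ constant. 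The paper argues instead via symmetry: $\partial_{x^1}$ and $\partial_{x^2}$ are affine Killing fields, Lie differentiation along them preserves $\mathcal{P}^0_{\mathbb{C}}(\mathcal{M})$ and acts componentwise, so a joint eigenvector of these two commuting operators on a finite-dimensional complex space yields $T=e^{a_1x^1+a_2x^2}\mathfrak{t}$ with a priori \emph{complex} $(a_1,a_2,\mathfrak{t})$; reality is then forced by quoting Theorem~\ref{T1.5}, since the Type~$\mathcal{A}$ Ricci tensor is symmetric and nonzero, so $\rho_s\ne0$ and $\dim\{\mathcal{P}^0_{\mathbb{C}}(\mathcal{M})\}=1$. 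Your argument buys several things: it avoids complexification and the appeal to Theorem~\ref{T1.5} entirely, it identifies the generator explicitly as the curvature operator $\mathfrak{t}=\mathcal{R}$ with exponents pinned down by $[\Gamma_k,\mathcal{R}]=-a_k\mathcal{R}$, and it yields $\dim\{\mathcal{P}^0(\mathcal{M})\}=1$ as a byproduct rather than as an input. What the paper's approach buys is uniformity: the same Killing-field/joint-eigenvector argument is reused almost verbatim in the Type~$\mathcal{B}$ setting of Lemma~\ref{L5.1}, where the Christoffel symbols are no longer constant, the relevant Killing fields are $\partial_{x^2}$ and $x^1\partial_{x^1}+x^2\partial_{x^2}$, and your constant-coefficient integration would need genuine reworking.
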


\begin{proof} It is convenient to complexity and set 
$\mathcal{P}_{\mathbb{C}}^0(\mathcal{M}):=\mathcal{P}^0(\mathcal{M})\otimes_{\mathbb{R}}\mathbb{C}$. 
Let $\mathfrak{K}(\mathcal{M})$ be the Lie algebra of affine Killing vector fields.
If $K\in\mathfrak{K}(\mathcal{M})$ and if
$T\in\mathcal{P}_{\mathbb{C}}^0(\mathcal{M})$,
then the Lie derivative $\mathcal{L}_KT$ belongs to $\mathcal{P}_{\mathbb{C}}^0(\mathcal{M})$. Thus
$\mathcal{P}_{\mathbb{C}}^0(\mathcal{M})$ is a finite
dimensional complex $\mathfrak{K}(\mathcal{M})$ module.
If $\nabla$ defines a Type~$\mathcal{A}$ structure on $\mathbb{R}^2$, the Christoffel symbols are constant and
$\partial_{x^1}$ and $\partial_{x^2}$ are affine Killing vector fields.
If $X$ and $Y$ are vector fields, then we have $\mathcal{L}_XY=[X,Y]$ is the Lie bracket.
Thus $\mathcal{L}_{\partial_{x^i}}\partial_{x^j}=0$ and dually $\mathcal{L}_{\partial_{x^i}}dx^j=0$;
if $T=T^i{}_j\partial_{x^i}\otimes dx^j$,
then $\{\mathcal{L}_{\partial_{x^k}}T\}^i{}_j=\partial_{x^k}\{T^i{}_j\}$; the components of $T$ do not interact. The operators 
$\partial_{x^1}$ and $\partial_{x^2}$ commute and act on the finite dimensional vector space $\mathcal{P}_{\mathbb{C}}^0(\mathcal{M})$. 
Consequently, there is a non-trivial joint eigenvector
so $\partial_{x^1}T^i{}_j=a_1T^i{}_j$ and $\partial_{x^2}T^i{}_j=a_2T^i{}_j$; this implies 
$T=e^{a_1x^1+a_2x^2}\mathfrak{t}$ 
for $0\ne \mathfrak{t}\in M_2^0(\mathbb{C})$. Since $\mathcal{M}$ is not flat, the Ricci tensor is nonzero. 
Since the Ricci tensor is symmetric
for a Type~$\mathcal{A}$ geometry, $\rho_s\ne0$. Theorem~\ref{T1.5} then implies 
$\dim\{\mathcal{P}^0_{\mathbb{C}}(\mathcal{M})\}=1$.
Thus the real and imaginary parts of $T$ are linearly dependent and we can assume $T$ is real. 
The desired result now follows.
\end{proof}

Theorem \ref{T1.9} will follow from the following result.

\begin{lemma}
Let $\mathcal{M}=(\mathbb{R}^2,\nabla)$ be a Type~$\mathcal{A}$ structure which is not flat.  Then
$\mathcal{P}^0(\mathcal{M})\ne\{0\}$ if and only if $\mathcal{M}$ is linearly equivalent to a Type~$\mathcal{A}$ structure 
with $\Gamma_{11}{}^2=\Gamma_{12}{}^2=0$. In this setting,
$$
\rho=(-\Gamma_{12}{}^1\Gamma_{12}{}^1+\Gamma_{11}{}^1\Gamma_{22}{}^1+\Gamma_{12}{}^1\Gamma_{22}{}^2)
dx^2\otimes dx^2\,.
$$
Let $a_1:=-\Gamma_{11}{}^1$, let $a_2:=\Gamma_{22}{}^2-\Gamma_{12}{}^1$, and let
$T=e^{a_1x^1+a_2x^2}\partial_{x^1}\otimes dx^2$. Then
$\mathcal{P}^0(\mathcal{M})=T\cdot\mathbb{R}$ is 1-dimensional and nilpotent.
\end{lemma}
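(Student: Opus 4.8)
The plan is to prove both directions through Lemma~\ref{L4.1}, which is the right tool here because it produces a generator of $\mathcal{P}^0(\mathcal{M})$ of the very special form $e^{a_1x^1+a_2x^2}\mathfrak{t}$ with $\mathfrak{t}$ \emph{constant}; this keeps the entire argument inside the linear, and hence Type~$\mathcal{A}$, category, whereas the general adapted coordinates of Lemma~\ref{L2.1} would in general destroy the constancy of the Christoffel symbols. Assume first that $\mathcal{P}^0(\mathcal{M})\ne\{0\}$. By Lemma~\ref{L4.1} I may write $\mathcal{P}^0(\mathcal{M})=e^{a_1x^1+a_2x^2}\mathfrak{t}\cdot\mathbb{R}$ for some $(a_1,a_2)\in\mathbb{R}^2$ and some nonzero $\mathfrak{t}\in M_2^0(\mathbb{R})$. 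My first step is to check that $\mathfrak{t}$ is nilpotent. The eigenvalues of any element of $\mathcal{P}^0(\mathcal{M})$ are constant by Lemma~\ref{L1.4}, so $\operatorname{Tr}\{T^2\}=e^{2(a_1x^1+a_2x^2)}\operatorname{Tr}\{\mathfrak{t}^2\}$ is constant; hence either $\operatorname{Tr}\{\mathfrak{t}^2\}=0$, or $(a_1,a_2)=(0,0)$ and $T=\mathfrak{t}$ is a constant parallel tensor. In the latter case a nonzero eigenvalue would make $\mathcal{M}$ K\"ahler or para-K\"ahler, which is impossible for a non-flat Type~$\mathcal{A}$ surface because its symmetric Ricci tensor is recurrent only when it has rank one, contradicting Theorem~\ref{T1.5b-EGR}~(1)--(2). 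Either way $\mathfrak{t}$ is nilpotent, matching the conclusion of the preamble to this section.

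Next I would normalize $\mathfrak{t}$. A nonzero nilpotent element of $M_2^0(\mathbb{R})$ is conjugate by some $\Theta\in\operatorname{GL}(2,\mathbb{R})$ to $\left(\begin{smallmatrix}0&1\\0&0\end{smallmatrix}\right)$, i.e.\ to $\partial_{x^1}\otimes dx^2$; since a linear change of coordinates preserves the Type~$\mathcal{A}$ property and carries $a_1x^1+a_2x^2$ to another linear form, I may assume without loss of generality that $T=e^{a_1x^1+a_2x^2}\partial_{x^1}\otimes dx^2$. The computational heart of the argument is then to impose $\nabla T=0$ componentwise, $T^i{}_{j;k}=\partial_{x^k}T^i{}_j+\Gamma_{k\ell}{}^iT^\ell{}_j-\Gamma_{kj}{}^\ell T^i{}_\ell=0$. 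The vanishing of the $(1,1)$- and $(2,2)$-components forces $\Gamma_{11}{}^2=\Gamma_{12}{}^2=0$, while the $(1,2)$-component gives $a_1=\Gamma_{12}{}^2-\Gamma_{11}{}^1=-\Gamma_{11}{}^1$ and $a_2=\Gamma_{22}{}^2-\Gamma_{12}{}^1$. This simultaneously establishes the asserted linear equivalence and pins down the exponents.

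For the converse I would assume the normalized relations $\Gamma_{11}{}^2=\Gamma_{12}{}^2=0$, set $T=e^{a_1x^1+a_2x^2}\partial_{x^1}\otimes dx^2$ with the stated $a_1,a_2$, and read the same component computation backwards to verify $\nabla T=0$, so that $0\ne T\in\mathcal{P}^0(\mathcal{M})$. The Ricci computation is then routine: for constant Christoffel symbols the curvature has no derivative terms, and substituting $\Gamma_{11}{}^2=\Gamma_{12}{}^2=0$ into $\rho(X,Y)=\operatorname{Tr}\{Z\mapsto R(Z,X)Y\}$ yields exactly $\rho=(-\Gamma_{12}{}^1\Gamma_{12}{}^1+\Gamma_{11}{}^1\Gamma_{22}{}^1+\Gamma_{12}{}^1\Gamma_{22}{}^2)\,dx^2\otimes dx^2$. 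Finally, Lemma~\ref{L4.1} exhibits $\mathcal{P}^0(\mathcal{M})$ as the real span of the single generator $e^{a_1x^1+a_2x^2}\mathfrak{t}$, so it is one-dimensional, and $T$ is nilpotent because $\mathfrak{t}$ is.

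I expect the main obstacle to be the bookkeeping around the linear normalization: I must be careful that the coordinate change used to bring $\mathfrak{t}$ to standard form is \emph{linear}, so that both the Type~$\mathcal{A}$ structure and the exponential weight are preserved, and that the nilpotency of $\mathfrak{t}$ is genuinely forced rather than assumed. Once those two points are secured, the remaining steps are the direct tensorial computation $\nabla T=0$ and the curvature substitution, both of which are mechanical.
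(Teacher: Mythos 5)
Your proposal is correct and follows essentially the same route as the paper's proof: invoke Lemma~\ref{L4.1}, rule out nonzero eigenvalues, bring the nilpotent generator to the form $e^{a_1x^1+a_2x^2}\partial_{x^1}\otimes dx^2$ by a linear change of coordinates, and read off the Christoffel relations and the exponents $a_1,a_2$ from $\nabla T=0$, concluding one-dimensionality from the symmetry of the Ricci tensor. The only divergence is in one sub-step: you exclude the K\"ahler/para-K\"ahler possibility by citing Theorem~\ref{T1.5b-EGR} together with the recurrent-iff-rank-one fact for Type~$\mathcal{A}$ surfaces, whereas the paper diagonalizes the would-be constant tensor, sets $\nabla T=0$ to force $\Gamma_{12}{}^1=\Gamma_{11}{}^2=\Gamma_{22}{}^1=\Gamma_{12}{}^2=0$, and obtains the contradiction $\rho=0$ directly; both arguments are sound.
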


\begin{proof}
Let $\nabla$ define a Type~$\mathcal{A}$ structure on $\mathbb{R}^2$ with $\mathcal{P}^0(\mathcal{M})\ne\{0\}$ which is not flat.
We apply Lemma~\ref{L4.1} to choose $(a_1,a_2)$ so that 
$0\ne T=e^{a_1x^1+a_2x^2}\mathfrak{t}\in\mathcal{P}_{\mathbb{C}}^0(\mathcal{M})$ for some 
$0\ne\mathfrak{t}\in M_2^0(\mathbb{C})$.
By Lemma~\ref{L1.4}, the eigenvalues of $T$ are constant. Assume the eigenvalues are nonzero. This implies 
$e^{a_1x^1+a_2x^2}$ is constant and hence $a_1=a_2=0$.
By rescaling $T$, we may assume the eigenvalues are $\pm1$ and hence, after making a complex
 linear change of coordinates,
we may assume $T^1{}_1=1$, $T^2{}_2=-1$, and $T^1{}_2=T^2{}_1=0$. Setting $\nabla T=0$ then yields the relations
$$\Gamma_{12}{}^1=\Gamma_{11}{}^2=\Gamma_{22}{}^1=\Gamma_{12}{}^2=0\,.$$
This forces the Ricci tensor to be zero which is false. Thus no Type~$\mathcal{A}$ geometry which is not flat admits a
K\"ahler or a para-K\"ahler structure.

We may therefore assume the eigenvalues of $T$ are constant and zero. After making a linear change of coordinates,
we can assume $T=e^{a_1x^1+a_2x^2}\partial_{x^1}\otimes dx^2$. We compute $\nabla T=0$ if and only if
$$
\begin{array}{cc}
\Gamma_{11}{}^2=0,&a_1+\Gamma_{11}{}^1-\Gamma_{12}{}^2=0\\
\Gamma_{12}{}^2=0,&a_2+\Gamma_{12}{}^1-\Gamma_{22}{}^2=0.\end{array}
$$
Thus $\mathcal{M}$ admits a non-trivial parallel nilpotent tensor of type $(1,1)$ if and only if $\Gamma_{11}{}^2=\Gamma_{12}{}^2=0$.
We make a direct computation to determine $\rho$. Since the Ricci tensor is symmetric, we use Theorem~\ref{T1.5} to see $\dim\{\mathcal{P}^0(\mathcal{M})\}=1$.
\end{proof}

Results of \cite{BGG18} show that if $\mathcal{M}$ is a Type~$\mathcal{A}$ geometry which
is not flat, then either $\operatorname{dim}\{\mathfrak{K}(\mathcal{M})\}=2$ or
$\operatorname{dim}\{\mathfrak{K}(\mathcal{M})\}=4$.

\begin{corollary} Let $\mathcal{M}=(\mathbb{R}^2,\nabla)$ be a Type~$\mathcal{A}$ structure. The
following assertions are equivalent.
\newline {\rm(1)} $\operatorname{Rank}\{\rho\}=1$.\hfill
{\rm(2)} $\mathcal{P}^0(\mathcal{M})\ne\{0\}$.\hfill
{\rm(3)} $\dim\{\mathcal{P}^0(\mathcal{M})\}=1$.\hfill
{\rm(4)} $\dim\{\mathfrak{K}(\mathcal{M})\}=4$.
\end{corollary}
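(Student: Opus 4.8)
The plan is to assemble the corollary from results already in hand, reading the four conditions as two pre-linked blocks, namely $(1)\Leftrightarrow(2)\Leftrightarrow(3)$ and then $(4)$, and to bridge the blocks through the rank of $\rho$. Throughout I keep the standing assumption of the section that $\mathcal{M}$ is not flat, so $\rho=\rho_s\neq0$; since the Ricci tensor of a Type~$\mathcal{A}$ model is symmetric, this forces $\operatorname{Rank}\{\rho\}\in\{1,2\}$, and by \cite{BGG18} we also have at our disposal the dichotomy $\dim\{\mathfrak{K}(\mathcal{M})\}\in\{2,4\}$.

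First I would dispatch $(1)\Leftrightarrow(2)\Leftrightarrow(3)$. The equivalence $(1)\Leftrightarrow(2)$ is exactly Theorem~\ref{T1.9}: a non-flat Type~$\mathcal{A}$ structure satisfies $\mathcal{P}^0(\mathcal{M})\neq\{0\}$ if and only if $\operatorname{Rank}\{\rho\}=1$. The implication $(3)\Rightarrow(2)$ is trivial, while $(2)\Rightarrow(3)$ is read off Lemma~\ref{L4.1}: whenever $\mathcal{P}^0(\mathcal{M})\neq\{0\}$ it has the form $e^{a_1x^1+a_2x^2}\mathfrak{t}\cdot\mathbb{R}$ and is therefore exactly one-dimensional (this is consistent with Theorem~\ref{T1.5}, since $\rho_s\neq0$ already excludes $\dim\{\mathcal{P}^0(\mathcal{M})\}\geq2$). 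This gives $(1)\Leftrightarrow(2)\Leftrightarrow(3)$.

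It remains to fold in $(4)$, and here the clean direction is $(4)\Rightarrow(1)$, which I would prove in contrapositive form as $\operatorname{Rank}\{\rho\}=2\Rightarrow\dim\{\mathfrak{K}(\mathcal{M})\}=2$. Any affine Killing field $X$ preserves $\nabla$, hence its curvature and its Ricci tensor, so $\mathcal{L}_X\rho=0$. If $\operatorname{Rank}\{\rho\}=2$, then $\rho$ is a nondegenerate symmetric $2$-tensor, i.e. a pseudo-Riemannian metric, and $\mathfrak{K}(\mathcal{M})$ is then a subalgebra of its isometry algebra. A connected surface has isometry algebra of dimension at most $3$, whence $\dim\{\mathfrak{K}(\mathcal{M})\}\leq3$, and the dichotomy forces $\dim\{\mathfrak{K}(\mathcal{M})\}=2$. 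Passing to contrapositives, $\dim\{\mathfrak{K}(\mathcal{M})\}=4$ implies $\operatorname{Rank}\{\rho\}=1$, which is $(4)\Rightarrow(1)$.

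For the converse $(1)\Rightarrow(4)$ I would put $\mathcal{M}$ in the normal form of Theorem~\ref{T1.9}, where $\Gamma_{11}{}^2=\Gamma_{12}{}^2=0$, and integrate the affine Killing system $(\mathcal{L}_X\nabla)_{ij}{}^\ell=0$ directly. The point to establish is that beyond the two translations $\partial_{x^1},\partial_{x^2}$ there are two further independent solutions, of exponential and trigonometric type, e.g. of the shape $e^{\alpha x^1}(\cos\beta x^2)\,\partial_{x^1}$ in a reduced model, so that $\dim\{\mathfrak{K}(\mathcal{M})\}>2$ and hence equals $4$ by the dichotomy. This last step is where the real work sits: the extra Killing fields are genuinely non-polynomial, so the heuristic that isotropy acts by linear vector fields is unavailable, and one must actually solve the second-order system in the normal form (or else invoke the explicit classification of Type~$\mathcal{A}$ models and their symmetry dimensions in \cite{BGG18}). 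Combining $(1)\Leftrightarrow(2)\Leftrightarrow(3)$ with the two implications $(4)\Rightarrow(1)$ and $(1)\Rightarrow(4)$ closes the equivalence.
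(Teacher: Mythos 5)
Your proposal is correct, and for the block $(1)\Leftrightarrow(2)\Leftrightarrow(3)$ it runs along the same lines as the paper: the paper gets these equivalences from Theorem~\ref{T1.9} (together with Lemma 2.3 of \cite{BGG18} for the normal form $\Gamma_{11}{}^2=\Gamma_{12}{}^2=0$), just as you do via Theorem~\ref{T1.9}, Lemma~\ref{L4.1} and Theorem~\ref{T1.5}; you also correctly flag the standing non-flatness assumption, without which $(2)$ would hold while $(1)$ fails. The genuine difference is in how $(4)$ is tied in. The paper handles $(1)\Leftrightarrow(4)$ in one stroke by citing Theorem 3.4 of \cite{BGG18}, whereas you give a self-contained argument for $(4)\Rightarrow(1)$: affine Killing fields satisfy $\mathcal{L}_X\rho=0$, so if $\operatorname{Rank}\{\rho\}=2$ then the symmetric tensor $\rho$ is a pseudo-Riemannian metric whose Killing algebra, of dimension at most $3$, contains $\mathfrak{K}(\mathcal{M})$, and the dichotomy $\dim\{\mathfrak{K}(\mathcal{M})\}\in\{2,4\}$ forces $\dim\{\mathfrak{K}(\mathcal{M})\}=2$. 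That is a real gain: it replaces a classification citation by a short conceptual argument. For the converse $(1)\Rightarrow(4)$, however, your direct route (integrating the affine Killing system in the normal form and exhibiting extra solutions of the guessed shape $e^{\alpha x^1}\cos(\beta x^2)\,\partial_{x^1}$) is only sketched: you neither solve the system nor verify that claimed form, so what actually carries this direction in your write-up is the fallback citation to \cite{BGG18} -- which is precisely what the paper does. In sum: same skeleton, with one implication made citation-free at the cost of the other still resting, as in the paper, on the Killing field classification of \cite{BGG18}.
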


\begin{proof} Results of \cite{BGG18} (see Lemma 2.3) show that $\rho_s$ has rank 1 if and only if $\mathcal{M}$ is linearly equivalent
to a structure where $\Gamma_{11}{}^2=0$ and $\Gamma_{12}{}^2=0$. The equivalence of Assertion~(1),
Assertion~(2), and Assertion~(3)
then follows from Theorem~\ref{T1.9}. The equivalence of Assertion~(1) and Assertion~(4) follows from Theorem 3.4 of \cite{BGG18}.
\end{proof}
 
 \section{Type~$\mathcal{B}$ geometry}\label{S5}
 
 Let $\mathcal{M}=(\mathbb{R}^+\times\mathbb{R},\nabla)$ where $\Gamma_{ij}{}^k=(x^1)^{-1}C_{ij}{}^k$ and
 $C_{ij}{}^k\in\mathbb{R}$ be a Type~$\mathcal{B}$ surface which is not flat such that $\mathcal{P}^0(\mathcal{M})$ is 
 non-trivial. In Lemma~\ref{L5.1}, we give an algebraic criteria for determining when $\mathcal{P}^0(\mathcal{M})$
 is non-trivial. In Lemmas~\ref{L5.6}--\ref{L5.15}, we use this criteria
 to divide the analysis into 5 different cases and to determine when $\dim\{\mathcal{P}^0(\mathcal{M})\}=1$ or
 $\dim\{\mathcal{P}^0(\mathcal{M})\}=3$. We first prove an analogue of Lemma~\ref{L4.1} in this setting. 
 \begin{lemma}\label{L5.1}
If $\nabla$ is a Type~$\mathcal{B}$ connection on 
 $M=\mathbb{R}^+\times\mathbb{R}$ and if $\mathcal{P}^0(\mathcal{M})\ne\{0\}$,
then there
exists $\alpha\in\mathbb{C}$ and $0\ne\mathfrak{t}\in M_2^0(\mathbb{C})$ so that 
$(x^1)^\alpha\mathfrak{t}\in\mathcal{P}_{\mathbb{C}}^0(\mathcal{M})$.
\end{lemma}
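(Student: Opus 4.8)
The plan is to follow the strategy of Lemma~\ref{L4.1}, replacing the two commuting translational Killing fields of the Type~$\mathcal{A}$ setting by the natural affine Killing fields of a Type~$\mathcal{B}$ geometry. First I would record that $X_1:=\partial_{x^2}$ and $X_2:=x^1\partial_{x^1}+x^2\partial_{x^2}$ are affine Killing vector fields for any Type~$\mathcal{B}$ connection $\Gamma_{ij}{}^k=(x^1)^{-1}C_{ij}{}^k$: this is a direct check of $\mathcal{L}_{X_a}\nabla=0$, using $\partial_{x^2}\Gamma_{ij}{}^k=0$ for $X_1$ and the homogeneity $x^1\partial_{x^1}\Gamma_{ij}{}^k=-\Gamma_{ij}{}^k$ for $X_2$. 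Unlike the Type~$\mathcal{A}$ case, these fields do not commute; instead they span the non-abelian solvable Lie algebra with $[X_1,X_2]=X_1$. As in Lemma~\ref{L4.1}, since $\nabla$ is preserved by each $X_a$, the Lie derivatives $\mathcal{L}_{X_a}$ map the finite-dimensional space $\mathcal{P}_{\mathbb{C}}^0(\mathcal{M})$ (of complex dimension at most $3$ by Lemma~\ref{L1.4}) to itself, and a short computation gives the componentwise formulas $\{\mathcal{L}_{X_1}T\}^i{}_j=\partial_{x^2}T^i{}_j$ and $\{\mathcal{L}_{X_2}T\}^i{}_j=(x^1\partial_{x^1}+x^2\partial_{x^2})T^i{}_j$, the endomorphism indices playing no role because the contributions $\mathcal{L}_{X_2}\partial_{x^k}=-\partial_{x^k}$ and $\mathcal{L}_{X_2}dx^k=dx^k$ cancel.

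The main obstacle is precisely this non-commutativity: I cannot simply extract a simultaneous eigenvector of two commuting operators as in the Type~$\mathcal{A}$ proof. I would resolve it using the bracket relation. Writing $A:=\mathcal{L}_{X_2}$ and $N:=\mathcal{L}_{X_1}$ as endomorphisms of $\mathcal{P}_{\mathbb{C}}^0(\mathcal{M})$, the identity $\mathcal{L}_{[X_1,X_2]}=[\mathcal{L}_{X_1},\mathcal{L}_{X_2}]$ together with $[X_1,X_2]=X_1$ gives $[A,N]=-N$, equivalently $AN=N(A-\operatorname{Id})$, whence $(A-(\alpha-1))N=N(A-\alpha)$ for every $\alpha\in\mathbb{C}$. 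Thus $N$ carries the generalized $\alpha$-eigenspace of $A$ into the generalized $(\alpha-1)$-eigenspace. Since $A$ acts on a finite-dimensional space and so has only finitely many eigenvalues, iterating $N$ eventually annihilates every vector; hence $N$ is nilpotent and $\ker N\ne\{0\}$. This is the classical fact that the derived algebra of a solvable Lie algebra acts nilpotently, here established directly.

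Finally, $[A,N]=-N$ shows that $\ker N$ is $A$-invariant, so $A$ restricted to the nonzero complex subspace $\ker N$ admits an eigenvector $0\ne T\in\mathcal{P}_{\mathbb{C}}^0(\mathcal{M})$ with $NT=0$ and $AT=\alpha T$ for some $\alpha\in\mathbb{C}$. In components this reads $\partial_{x^2}T^i{}_j=0$ and $(x^1\partial_{x^1}+x^2\partial_{x^2})T^i{}_j=\alpha T^i{}_j$; the first equation forces each $T^i{}_j$ to depend only on $x^1$, whereupon the second reduces to the Euler equation $x^1\partial_{x^1}T^i{}_j=\alpha T^i{}_j$ and integrates to $T^i{}_j=c^i{}_j\,(x^1)^{\alpha}$. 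Setting $\mathfrak{t}:=(c^i{}_j)$ then gives $T=(x^1)^{\alpha}\mathfrak{t}$ with $0\ne\mathfrak{t}\in M_2^0(\mathbb{C})$, as claimed.
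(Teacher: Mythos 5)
Your proof is correct, and while it shares the paper's overall skeleton, it resolves the crucial step by a genuinely different mechanism. The common skeleton: both proofs use the affine Killing fields $\partial_{x^2}$ and $X=x^1\partial_{x^1}+x^2\partial_{x^2}$, observe that their Lie derivatives act componentwise on the finite-dimensional space $\mathcal{P}^0_{\mathbb{C}}(\mathcal{M})$, and finish by solving $\partial_{x^2}T^i{}_j=0$ together with the Euler equation $x^1\partial_{x^1}T^i{}_j=\alpha T^i{}_j$ to get $T=(x^1)^\alpha\mathfrak{t}$. The difference lies in how one produces a nonzero element annihilated by $N:=\mathcal{L}_{\partial_{x^2}}$. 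The paper picks an arbitrary eigenvector $T$ of $N$ with eigenvalue $a_2$, writes $T^i{}_j=e^{a_2x^2}t^i{}_j(x^1)$, and shows by an explicit computation that if $a_2\ne0$ then the iterates $\mathcal{L}_X^kT$ pick up polynomial factors $a_2^k(x^2)^k$ of unbounded degree and are therefore linearly independent for every $k$, contradicting $\dim\{\mathcal{P}^0_{\mathbb{C}}(\mathcal{M})\}\le3$; this forces $a_2=0$, and the subspace $\mathcal{V}$ of elements depending only on $x^1$ (your $\ker N$) is then nonzero and $\mathcal{L}_X$-invariant. You instead exploit the bracket relation $[X_1,X_2]=X_1$ abstractly: the induced operator identity $[A,N]=-N$ shows $N$ shifts generalized $A$-eigenvalues down by one, hence $N$ is nilpotent, hence $\ker N\ne\{0\}$ and is $A$-invariant --- in effect a direct proof of Lie's theorem for this two-dimensional solvable algebra. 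Both arguments use finite-dimensionality in an essential way; yours is more conceptual and would generalize verbatim to any setting where the affine Killing algebra contains such a solvable pair, while the paper's is more elementary and self-contained, requiring nothing beyond the explicit functional form of an eigenvector. The two routes converge at the same final step, so the conclusion $(x^1)^\alpha\mathfrak{t}\in\mathcal{P}_{\mathbb{C}}^0(\mathcal{M})$ with $0\ne\mathfrak{t}\in M_2^0(\mathbb{C})$ follows correctly in both.
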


\begin{proof} Let $\nabla$ define a Type~$\mathcal{B}$ structure on $\mathbb{R}^+\times\mathbb{R}$. 
The vector fields
$\partial_{x^2}$ and $X:=x^1\partial_{x^1}+x^2\partial_{x^2}$ are affine Killing vector fields (see \cite{BGG18}). We have:
$$\begin{array}{lll}
\mathcal{L}_X(\partial_{x^i})=[X,\partial_{x^i}]=-\partial_{x^i}\,&\mathcal{L}_X(dx^j)=dx^j,&
\mathcal{L}_X(\partial_{x^i}\otimes dx^j)=0,\\
\mathcal{L}_{\partial_{x^2}}(\partial_{x^i})=0,&\mathcal{L}_{\partial_{x^2}}(dx^j)=0,&
\mathcal{L}_{\partial_{x^2}}(\partial_{x^i}\otimes dx^j)=0.
\end{array}$$

Therefore the components do not interact and we have:
$$
\{\mathcal{L}_XT\}^i{}_j=XT^i{}_j\text{ and }
\{\mathcal{L}_{\partial_{x^2}}T\}^i{}_j=\partial_{x^2}T^i{}_j\,.
$$
Because $\mathcal{P}_{\mathbb{C}}^0(\mathcal{M})$ is a finite dimensional $\partial_{x^2}$ module, we can
find a non-trivial complex eigenvector, i.e. $0\ne T\in\mathcal{P}_{\mathbb{C}}^0(\mathcal{M})$ so
$\partial_{x^2}T^i{}_j=a_2T^i{}_j$. This implies that $T^i{}_j=e^{a_2x^2}t^i{}_j(x^1)$. Applying $X^k$ yields
$$
X^k(T^i{}_j)=e^{a_2x^2}\{a_2^k(x^2)^kt^i{}_j(x^1)+O((x^2)^{k-1})\}\,.
$$
Thus if $a_2\ne0$, the elements $\{T,\mathcal{L}_XT,\dots,\mathcal{L}_XT^k\}$ are linearly independent for any $k$.
This is false since $\dim\{\mathcal{P}_{\mathbb{C}}^0(\mathcal{M})\}\le3$. Therefore,
$T=t^i{}_j(x^1)$. We let $\mathcal{V}\ne\{0\}$ be the subspace of all elements of $\mathcal{P}_{\mathbb{C}}^0(\mathcal{M})$
where $T=T(x^1)$. Choose a non-trivial eigenvector of $\mathcal{L}_X$. Then $x^1\partial_{x^1}T=\alpha T$ implies
$T(x^1)=(x^1)^\alpha\mathfrak{t}$ for some $\mathfrak{t}\in M_2^0(\mathbb{C})$.
\end{proof}

\begin{remark}\label{R5.2}\rm
In the Type~$\mathcal{A}$ setting, the condition $\operatorname{Rank}\{\rho_s\}=1$ implies $\mathcal{P}^0(\mathcal{M})$ is
non-trivial. This fails in the Type~$\mathcal{B}$ setting. Let $\mathcal{M}$ be the Type~$\mathcal{B}$ surface defined by setting 
$C_{22}{}^2=(3+2\sqrt{3})/3$ and $C_{ij}{}^k=1$ otherwise.
We compute that
$$
\rho_s=\frac1{(x^1)^2}\left(\begin{array}{cc}1+\frac{2}{\sqrt{3}}&\frac1{\sqrt{3}} \\ \frac1{\sqrt{3}} & \frac2{\sqrt{3}}-1\end{array}\right)
$$
and consequently $\rho_s$ has rank 1. Assume $\dim\{\mathcal{P}^0(\mathcal{M})\}\ge1$. It follows from Lemma~\ref{L5.1}
that there exists an element
$\mathcal{P}_{\mathbb{C}}^0(\mathcal{M})$ of the form
$T=(x^1)^\alpha(\mathfrak{t}^i{}_j)$ where
$0\ne(\mathfrak{t}^i{}_j)\in M_2^0(\mathbb{C})$. Setting $T^i{}_{j;2}=0$ yields the relations:
$$
(x^1)^{\alpha-1}\left(
\begin{array}{cc}
\mathfrak{t}^2{}_1-\mathfrak{t}^1{}_2 & -2 \mathfrak{t}^1{}_1-\frac{2 }{\sqrt{3}}\mathfrak{t}^1{}_2 \\
 2 \mathfrak{t}^1{}_1+\frac{2 }{\sqrt{3}}\mathfrak{t}^2{}_1 & \mathfrak{t}^1{}_2-\mathfrak{t}^2{}_1\end{array}\right)=
 \left(\begin{array}{cc}0&0\\0&0\end{array}\right)\,.
 $$
 We solve this relation to see $\mathfrak{t}^2{}_1=\mathfrak{t}^1{}_2$ and $\mathfrak{t}^1{}_1=-\frac{1}{\sqrt{3}}\mathfrak{t}^1{}_2$. Substituting these relations and setting
 $T^i{}_{j;1}=0$ then yields:
 $$
(x^1)^{\alpha-1} \left(\begin{array}{cc} -\frac{ \alpha }{\sqrt{3}}\mathfrak{t}^1{}_2 & \left(\alpha +\frac{2}{\sqrt{3}}\right) \mathfrak{t}^1{}_2\\
 \left(\alpha -\frac{2}{\sqrt{3}}\right)\mathfrak{t}^1{}_2  & \frac{ \alpha }{\sqrt{3}}\mathfrak{t}^1{}_2\end{array}\right)
 =\left(\begin{array}{cc} 0 & 0 \\ 0 & 0\end{array}\right)\,.
 $$
This shows $\mathfrak{t}^1{}_2=0$ and hence $T=0$. This shows $\mathcal{P}^0(\mathcal{M})$ is trivial.
The result also follows from Theorem~\ref{T1.5b-EGR} just observing that the symmetric Ricci tensor $\rho_s$ is not recurrent.
\end{remark}

\begin{definition}\rm We follow the discussion of \cite{BGG18} and introduce the following surfaces of Type~$\mathcal{B}$.
\ \begin{enumerate}
\item For $c\in\mathbb{R}$, let $\mathcal{Q}_c$ be the affine manifold of Type~$\mathcal{B}$ defined by
$$
	C_{11}{}^1=0\,,\,\, C_{11}{}^2=c\,,\,\, C_{12}{}^1=1\,,\,\, C_{12}{}^2=0\,,\,\, C_{22}{}^1=0\,,\,\, C_{22}{}^2=1.
$$
Since $\rho=(x^1)^{-2}dx^1\wedge dx^2$,  $\rho_s=0$.
\item For $0\ne c\in\mathbb{R}$, let $\mathcal{P}_{0,c}^\pm$  be the affine manifold of Type~$\mathcal{B}$ defined by
$$\begin{array}{lll}
C_{11}{}^1=\mp c^2+1,& C_{11}{}^2=c,& C_{12}{}^1=0,\\[0.05in]
C_{12}{}^2=\mp c^2,& C_{22}{}^1=\pm 1,& C_{22}{}^2=\pm 2c.
\end{array}$$
Since $\rho=\pm (x^1)^{-2} c\, dx^1\wedge dx^2$, $\rho_s=0$.
\end{enumerate}
\end{definition}

By Theorem~\ref{T1.5}, $\rho_s=0$ if and only if $\dim\{\mathcal{P}^0(\mathcal{M})\}=3$. We give
a complete description of Type~$\mathcal{B}$ manifolds which are not flat where $\rho_s=0$ as follows.

\begin{lemma}\label{L5.4}
\ \begin{enumerate} 
\item If $\mathcal{M}$ is a Type~$\mathcal{B}$ manifold which is not flat but which has $\rho_s=0$, then
$\mathcal{M}$ is linearly equivalent either to $\mathcal{Q}_c$ or to $\mathcal{P}^\pm_{0,c}$.
\item If $\mathcal{M}=\mathcal{Q}_c$ for $c\ne0$,  then
\begin{eqnarray*}
&&\mathcal{P}_{\mathbb{C}}^0(\mathcal{Q}_c)=\operatorname{Span}\left\{
\left(\begin{array}{cc}0&1\\c&0\end{array}\right),\quad
(x^1)^{2\sqrt{c}}\left(\begin{array}{cc}
\sqrt{c}&1
\\-c&-\sqrt{c}
\end{array}\right)\right.,\\
&&\qquad\qquad\qquad\qquad\qquad\qquad\quad\left.(x^1)^{-2\sqrt{c}}\left(\begin{array}{cc}
-\sqrt{c}&1\\-c&\sqrt{c}
\end{array}\right)\right\}\,.
\end{eqnarray*}
\item If $\mathcal{M}=\mathcal{Q}_c$ for $c=0$,  then
\begin{eqnarray*}
&&\mathcal{P}^0(\mathcal{Q}_0)=\operatorname{Span}\left\{
\left(\begin{array}{cc}
0&1\\0&0
\end{array}\right),\quad
\left(\begin{array}{cc}
-\log(x^1)&1-\log(x^1)^2
\\1&-\log(x^1)
\end{array}\right),\right.\\
&&\qquad\qquad\qquad\qquad\qquad\qquad\quad
\left.
\left(\!\begin{array}{cc}
-\log(x^1)&-1-\log(x^1)^2
\\1&-\log(x^1)
\end{array} \!\right)
\!\right\}\,.
\end{eqnarray*}
\item If $\mathcal{M}=\mathcal{P}^\pm_{0,c}$, then
\begin{eqnarray*}
\mathcal{P}^0(\mathcal{P}^\pm_{0,c})&=&\operatorname{Span}\left\{
(x^1)^{-1}\left(\begin{array}{cc}
-c&1\\-c^2&c
\end{array}\right)\right. ,\\
&&\qquad\quad
(x^1)^{-1}\left(\begin{array}{cc}
\pm\frac{1}{2}(x^1 \mp 2c x^2)&x^2\\
\pm c(x^1\mp cx^2)&\mp\frac{1}{2}(x^1 \mp 2c x^2)
\end{array}\right),\\
&&\qquad\quad
\left.
(x^1)^{-1}\left(\begin{array}{cc}
\pm x^2(x^1 \mp c x^2)&(x^2)^2\\
-(x^1\mp cx^2)^2&\mp x^2(x^1 \mp c x^2)
\end{array}\right)\right\}\,.
\end{eqnarray*}
\end{enumerate}
\end{lemma}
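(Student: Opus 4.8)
The plan is to separate the classification in assertion~(1) from the explicit computations in (2)--(4), and to use Theorem~\ref{T1.5}~(3) throughout: the hypothesis $\rho_s=0$ is exactly equivalent to $\dim\{\mathcal{P}^0(\mathcal{M})\}=3$, so in (2)--(4) I never have to prove that the exhibited tensors span everything once I have produced three independent ones.

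For assertion~(1) I would begin with a general non-flat Type~$\mathcal{B}$ connection $\Gamma_{ij}{}^k=(x^1)^{-1}C_{ij}{}^k$ and compute the Ricci tensor directly. Because the Christoffel symbols are homogeneous of degree $-1$ in $x^1$, the tensor $\rho$ equals $(x^1)^{-2}$ times a constant matrix whose entries are quadratic polynomials in the six constants $C_{ij}{}^k$; symmetrizing and imposing $\rho_s=0$ yields three polynomial equations in the $C_{ij}{}^k$. I would then solve this system and apply the group of linear equivalences preserving the Type~$\mathcal{B}$ normal form, namely the maps $(x^1,x^2)\mapsto(ax^1,\,cx^1+dx^2)$ with $a>0$, $d\ne0$ (these are precisely the $\Theta\in\operatorname{GL}(2,\mathbb{R})$ preserving the half-plane $x^1>0$), to bring each solution to a normal form. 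The main obstacle lies here: organizing the solution variety, discarding the flat branch via $\rho_{sk}\ne0$, and normalizing the admissible branches into exactly the two families $\mathcal{Q}_c$ and $\mathcal{P}^\pm_{0,c}$ is the bulk of the work, since the distinction between the two families is governed by the conjugacy type of the associated constant matrices and requires a careful case split.

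For assertions~(2)--(4) the cleanest finish is verification rather than rederivation: since each listed connection has $\rho_s=0$, Theorem~\ref{T1.5}~(3) gives $\dim\{\mathcal{P}^0(\mathcal{M})\}=3$, so it suffices to check that the three tensors displayed in each case are trace free, parallel, and linearly independent. Trace freeness and linear independence are immediate by inspection of the distinct $x^1$-powers (or $\log$-powers), and the parallel condition $T^i{}_{j;k}=\partial_{x^k}T^i{}_j+\Gamma_{k\ell}{}^iT^\ell{}_j-\Gamma_{kj}{}^\ell T^i{}_\ell=0$ collapses, after substituting the given $C_{ij}{}^k$, to a finite algebraic check for each of $k=1,2$.

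To motivate the explicit forms (and to have produced them in the first place) I would invoke Lemma~\ref{L5.1}: writing $T=(x^1)^\alpha\mathfrak{t}$, the equation $T^i{}_{j;2}=0$ becomes a commutator condition $[C_2,\mathfrak{t}]=0$ with $(C_2)^i{}_j:=C_{2j}{}^i$, while $T^i{}_{j;1}=0$ becomes the Euler-type relation $\alpha\,\mathfrak{t}+[C_1,\mathfrak{t}]=0$, so the admissible exponents $\alpha$ are the negatives of the eigenvalues of $\operatorname{ad}_{C_1}$ on the $C_2$-commutant in $M_2^0(\mathbb{C})$. For $\mathcal{Q}_c$ one has $C_2=\operatorname{Id}$ (hence no $x^2$-dependence) and $C_1=\left(\begin{smallmatrix}0&1\\c&0\end{smallmatrix}\right)$, whose adjoint has eigenvalues $0,\pm2\sqrt{c}$, producing the powers $(x^1)^{\pm2\sqrt{c}}$ of assertion~(2); when $c=0$ the matrix $C_1$ is nilpotent, $\operatorname{ad}_{C_1}$ is a single Jordan block of size three, and the degeneracy forces the logarithmic solutions of assertion~(3). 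For $\mathcal{P}^\pm_{0,c}$ the matrix $C_2$ is non-scalar, so the operator $\mathcal{L}_{\partial_{x^2}}$ on $\mathcal{P}^0_{\mathbb{C}}(\mathcal{M})$ is nilpotent but nonzero, and its generalized eigenvectors are polynomial in $x^2$; this accounts for the linear and quadratic $x^2$-terms in assertion~(4). Coupling this nilpotent $x^2$-structure to the $x^1$-power structure is the only step in (2)--(4) that is not a one-line verification, and handling that Jordan structure carefully is the remaining point of care.
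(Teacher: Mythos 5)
Your plan for assertions (2)--(4) is essentially the paper's own: the paper disposes of them with ``the remaining assertions follow from a direct computation,'' and your scheme (verify the displayed tensors are trace free, parallel and independent, then get spanning for free from $\dim\{\mathcal{P}^0(\mathcal{M})\}\le3$, or from Theorem~\ref{T1.5}~(3)) is exactly that computation, sensibly organized; your Lie-algebraic bookkeeping ($[C_2,\mathfrak{t}]=0$ and $\alpha\,\mathfrak{t}+[C_1,\mathfrak{t}]=0$, with $\alpha$ running over the eigenvalues of $\operatorname{ad}_{C_1}$ on the commutant of $C_2$) is a clean and correct way to produce the solutions, consistent with Lemma~\ref{L5.1}. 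For assertion (1), however, you part ways with the paper: the paper simply cites Lemma~4.6 of \cite{BGG18}, whereas you propose to redo the classification by solving the quadratic system $\rho_s=0$ in the six constants $C_{ij}{}^k$ and normalizing under the shear group $(x^1,x^2)\mapsto(ax^1,cx^1+dx^2)$. That route is viable in principle, but you explicitly leave its core --- the case split and the normalization to exactly the two families $\mathcal{Q}_c$ and $\mathcal{P}^\pm_{0,c}$ --- as an acknowledged ``obstacle'' without carrying it out, so as written assertion (1) is described rather than proved.

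The concrete gap is in assertion (3). You claim trace freeness and linear independence of the displayed tensors are ``immediate by inspection''; for (3) that inspection in fact refutes the claim. Writing $T_1,T_2,T_3$ for the three printed matrices, one has $T_2-T_3=2T_1$ (the last two matrices differ only by $\pm1$ in the $(1,2)$ slot), so they are linearly dependent and span at most a $2$-plane, which cannot equal $\mathcal{P}^0(\mathcal{Q}_0)$ since that space is $3$-dimensional by Theorem~\ref{T1.5}~(3). Worse, as printed $T_2$ and $T_3$ have trace $-2\log(x^1)\ne0$, so they are not trace free and not even parallel (a parallel tensor has constant trace by Lemma~\ref{L1.4}); their $(2,2)$ entries must read $+\log(x^1)$. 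Carrying out your own Jordan-block analysis for $\mathcal{Q}_0$, i.e.\ solving $\partial_{x^2}T=0$, $\partial_{x^1}r=0$, $\partial_{x^1}p=-r/x^1$, $\partial_{x^1}q=2p/x^1$ for $T=\left(\begin{smallmatrix}p&q\\r&-p\end{smallmatrix}\right)$, gives the correct spanning set
$$
\left(\begin{array}{cc}0&1\\0&0\end{array}\right),\quad
\left(\begin{array}{cc}1&2\log(x^1)\\0&-1\end{array}\right),\quad
\left(\begin{array}{cc}-\log(x^1)&-(\log(x^1))^2\\1&\log(x^1)\end{array}\right),
$$
whose second element lies outside the span of the printed tensors. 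So your method, executed carefully, is sound and would expose this error in the statement; but a verification-only argument that asserts the printed tensors pass the check asserts something false, and that step fails.
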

\begin{proof}
Assertion~(1) follows from Lemma 4.6 in \cite{BGG18}; the remaining assertions follow from a direct computation.
\end{proof}

\begin{remark}\label{R5.5}\rm
Suppose that $\mathcal{M}$ is a Type~$\mathcal{B}$ surface with $\mathcal{P}^0(\mathcal{M})$ non-trivial.
By Lemma~\ref{L5.1}, there exists $\alpha\in\mathbb{C}$ and $0\ne\mathfrak{t}\in M_2^0(\mathbb{C})$ so that 
$T:=(x^1)^\alpha\mathfrak{t}\in\mathcal{P}_{\mathbb{C}}^0(\mathcal{M})$.   If $\alpha$ is complex, then
the real and imaginary parts of $T$ are linearly dependent and both belong to $\mathcal{P}^0(\mathcal{M})$. 
This implies $\dim\{\mathcal{P}^0(\mathcal{M})\}\ge2$ and hence $\rho_s=0$. Lemma~\ref{L5.4} then yields
$\mathcal{M}=\mathcal{Q}_c$ for $c<0$ and $\alpha$ is purely imaginary.
\end{remark}

In view of Lemma~\ref{L5.4}, we will assume $\rho_s\ne0$ henceforth.
Let $\mathcal{M}$ be a Type~$\mathcal{B}$ geometry with $\mathcal{P}^0(\mathcal{M})$ non-trivial and, since $\rho_s\ne0$,
$\dim\{\mathcal{P}^0(\mathcal{M})\}=1$.
By Lemma~\ref{L5.1}, there exists $\alpha\in\mathbb{C}$ and $0\ne\mathfrak{t}\in M_2^0(\mathbb{C})$ so that 
$(x^1)^\alpha\mathfrak{t}\in\mathcal{P}_{\mathbb{C}}^0(\mathcal{M})$. By Remark~\ref{R5.5}, $\alpha\in\mathbb{R}$ and thus, by 
taking real and imaginary parts,  we may assume that 
 $0\ne\mathfrak{t}\in M_2^0(\mathbb{R})$. Suppose $\alpha=0$. We
 deal with the case $\mathfrak{t}^1{}_2\ne0$ in Lemma~\ref{L5.6}, the case $\mathfrak{t}^1{}_2=0$
and $\mathfrak{t}^2{}_1\ne0$ in Lemma~\ref{L5.8}, and 
the case $\mathfrak{t}^1{}_2=\mathfrak{t}^2{}_1=0$ and $\mathfrak{t}^1{}_1\ne0$ in Lemma~\ref{L5.10}. 
We then turn to the situation where $\alpha\ne0$. Since $\det\{T\}=(x^1)^{2\alpha}\det\{\mathfrak{t}\}$ is constant and since $\alpha\ne0$ is real,
we conclude that $\mathfrak{t}$ is nilpotent. In Lemma~\ref{L5.12}, we assume $\mathfrak{t}^1{}_2\ne0$ and
in Lemma~\ref{L5.15}, we assume $\mathfrak{t}^1{}_2=0$ to complete our analysis.

\begin{lemma}\label{L5.6}
Let $\nabla$ define a Type~$\mathcal{B}$ structure on $\mathbb{R}^+\times\mathbb{R}$ with $\rho_s\ne0$. Suppose that there exists
$0\ne\mathfrak{t}\in\mathcal{P}^0(\mathcal{M})\cap M_2(\mathbb{R})$ with $\mathfrak{t}^1{}_2\ne0$. Rescale $\mathfrak{t}$ to assume that
$\mathfrak{t}^1{}_2=1$.  Then 
$$\begin{array}{ll}
C_{11}{}^1=C_{22}{}^1\, \mathfrak{t}^2{}_1+2(C_{22}{}^2+2C_{22}{}^1 \, \mathfrak{t}^1{}_1)\mathfrak{t}^1{}_1,&
C_{12}{}^1=C_{22}{}^2+2C_{22}{}^1\, \mathfrak{t}^1{}_1,\\[0.05in]
C_{11}{}^2=(C_{22}{}^2+2C_{22}{}^1\, \mathfrak{t}^1{}_1)\mathfrak{t}^2{}_1,& C_{12}{}^2=C_{22}{}^1\,\mathfrak{t}^2{}_1\\[0.05in]
\rho_s=(x^1)^{-2}C_{22}{}^1\left(\begin{array}{cc}\mathfrak{t}^2{}_1&-\mathfrak{t}^1{}_1\\-\mathfrak{t}^1{}_1&-1\end{array}\right),&
\  C_{22}{}^1\ne0,\\[0.05in]
\mathcal{P}^0(\mathcal{M})=\left(\begin{array}{cc}\mathfrak{t}^1{}_1&1\\\mathfrak{t}^2{}_1&-\mathfrak{t}^1{}_1\end{array}\right)
\cdot\mathbb{R}\end{array}$$
\end{lemma}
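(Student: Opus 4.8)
The plan is to exploit the fact that in the case at hand the parameter $\alpha$ vanishes, so $\mathfrak{t}$ is a \emph{constant} matrix and the equation $\nabla\mathfrak{t}=0$ becomes purely algebraic. Since $\Gamma_{ij}{}^k=(x^1)^{-1}C_{ij}{}^k$ and $\partial_{x^k}\mathfrak{t}=0$, the identity $\mathfrak{t}^i{}_{j;k}=\Gamma_{k\ell}{}^i\mathfrak{t}^\ell{}_j-\Gamma_{kj}{}^\ell\mathfrak{t}^i{}_\ell=0$ is, after clearing the common factor $(x^1)^{-1}$, equivalent to the two commutation relations $[B_1,\mathfrak{t}]=0$ and $[B_2,\mathfrak{t}]=0$, where
$$
B_1=\left(\begin{array}{cc}C_{11}{}^1&C_{12}{}^1\\ C_{11}{}^2&C_{12}{}^2\end{array}\right),\qquad
B_2=\left(\begin{array}{cc}C_{12}{}^1&C_{22}{}^1\\ C_{12}{}^2&C_{22}{}^2\end{array}\right)
$$
are the matrices formed from the Christoffel constants with first index $k=1$ and $k=2$, respectively (here $(B_k)^i{}_\ell=C_{k\ell}{}^i$). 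Because $\mathfrak{t}^1{}_2=1\neq0$, the matrix $\mathfrak{t}$ is not a multiple of the identity, so its centralizer in $M_2(\mathbb{R})$ is exactly $\{aI+b\,\mathfrak{t}:a,b\in\mathbb{R}\}$. Hence $B_1=a_1I+b_1\mathfrak{t}$ and $B_2=a_2I+b_2\mathfrak{t}$ for suitable real scalars $a_1,b_1,a_2,b_2$.

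Next I would read off the entries. Since $\mathfrak{t}^1{}_2=1$, the $(1,2)$ entry of $B_2$ gives $C_{22}{}^1=b_2$ and its $(2,2)$ entry gives $C_{22}{}^2=a_2-b_2\mathfrak{t}^1{}_1$, so $a_2=C_{22}{}^2+C_{22}{}^1\mathfrak{t}^1{}_1$. The key point is that the first column of $B_2$, namely $(C_{12}{}^1,C_{12}{}^2)$, coincides with the second column of $B_1$; matching these shared entries produces the consistency relations $b_1=a_2+b_2\mathfrak{t}^1{}_1$ and $a_1-b_1\mathfrak{t}^1{}_1=b_2\mathfrak{t}^2{}_1$. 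Substituting $a_2,b_2$ into the first relation yields $C_{12}{}^1=b_1=C_{22}{}^2+2C_{22}{}^1\mathfrak{t}^1{}_1$, and then $C_{11}{}^2=b_1\mathfrak{t}^2{}_1=(C_{22}{}^2+2C_{22}{}^1\mathfrak{t}^1{}_1)\mathfrak{t}^2{}_1$ and $C_{12}{}^2=b_2\mathfrak{t}^2{}_1=C_{22}{}^1\mathfrak{t}^2{}_1$ follow at once. Finally the second relation gives $a_1=b_1\mathfrak{t}^1{}_1+b_2\mathfrak{t}^2{}_1$, whence $C_{11}{}^1=a_1+b_1\mathfrak{t}^1{}_1=2b_1\mathfrak{t}^1{}_1+b_2\mathfrak{t}^2{}_1=C_{22}{}^1\mathfrak{t}^2{}_1+2(C_{22}{}^2+2C_{22}{}^1\mathfrak{t}^1{}_1)\mathfrak{t}^1{}_1$, which is the asserted formula. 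This establishes all four relations on the $C_{ij}{}^k$.

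It remains to compute $\rho_s$ and to identify $\mathcal{P}^0(\mathcal{M})$. Substituting the four relations into the standard expression for the Ricci tensor of a Type~$\mathcal{B}$ surface (a direct computation, since $\rho=(x^1)^{-2}\hat\rho$ with $\hat\rho$ quadratic in the $C_{ij}{}^k$) and symmetrizing yields the stated formula for $\rho_s$. Since the matrix $\left(\begin{array}{cc}\mathfrak{t}^2{}_1&-\mathfrak{t}^1{}_1\\-\mathfrak{t}^1{}_1&-1\end{array}\right)$ has $(2,2)$ entry equal to $-1$ and so never vanishes, the condition $\rho_s=0$ is equivalent to $C_{22}{}^1=0$; as we are assuming $\rho_s\neq0$, we conclude $C_{22}{}^1\neq0$. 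Finally, $\mathfrak{t}\in\mathcal{P}^0(\mathcal{M})$ by hypothesis and, since $\rho_s\neq0$, Theorem~\ref{T1.5} forces $\dim\{\mathcal{P}^0(\mathcal{M})\}=1$ (the value $2$ is excluded by Theorem~\ref{T1.5}(2), and the value $3$ would force $\rho_s=0$ by Theorem~\ref{T1.5}(3)); hence $\mathcal{P}^0(\mathcal{M})=\mathfrak{t}\cdot\mathbb{R}$.

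The genuinely delicate part is the bookkeeping in the second step, namely keeping track of the two entries $C_{12}{}^1$ and $C_{12}{}^2$ that $B_1$ and $B_2$ share: it is precisely the two resulting consistency relations that pin down $C_{11}{}^1$ and $C_{12}{}^1$ rather than leaving them free, and the commutator reformulation is what makes this transparent. The Ricci computation itself, though lengthy, is routine once the four relations are in hand.
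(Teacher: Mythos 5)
Your proof is correct, and the core step is organized genuinely differently from the paper's. The paper's own proof simply writes out the two matrix equations $\nabla_{\partial_{x^i}}\mathfrak{t}=0$ in components and solves the resulting linear system for the $C_{ij}{}^k$ by direct elimination; you instead observe that, because $\mathfrak{t}$ is a constant matrix and $\Gamma_{ij}{}^k=(x^1)^{-1}C_{ij}{}^k$, parallelism is equivalent to the two commutation relations $[B_1,\mathfrak{t}]=[B_2,\mathfrak{t}]=0$, and then invoke the fact that the centralizer of a non-scalar element of $M_2(\mathbb{R})$ is $\operatorname{Span}\{\operatorname{id},\mathfrak{t}\}$, so that $B_k=a_k\operatorname{id}+b_k\mathfrak{t}$. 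The stated relations then drop out of the torsion-free symmetry $C_{12}{}^i=C_{21}{}^i$ (your two consistency equations identifying the shared column of $B_1$ and $B_2$); I have checked that this reproduces exactly the four relations of the lemma, and your commutator equations agree entry-by-entry with the paper's displayed matrices. What your route buys is transparency: it explains a priori why the solution space is parametrized by the two free constants $C_{22}{}^1,C_{22}{}^2$ (equivalently $b_2,a_2$) rather than having this emerge from unstructured elimination. What the paper's brute-force template buys is uniformity: the same ``write out $\nabla T=0$ and solve'' scheme is reused verbatim in Lemmas~\ref{L5.8}, \ref{L5.10}, \ref{L5.12} and \ref{L5.15}, including the cases $T=(x^1)^\alpha\mathfrak{t}$ with $\alpha\ne0$, where the derivative term $\alpha(x^1)^{\alpha-1}\mathfrak{t}$ spoils the pure commutator form and your centralizer argument would need modification. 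Your remaining steps --- the direct computation of $\rho_s$, the observation that the $(2,2)$-entry $-1$ forces $C_{22}{}^1\ne0$, and the appeal to Theorem~\ref{T1.5} (legitimate, since $\mathbb{R}^+\times\mathbb{R}$ is simply connected) to conclude $\dim\{\mathcal{P}^0(\mathcal{M})\}=1$ and hence $\mathcal{P}^0(\mathcal{M})=\mathfrak{t}\cdot\mathbb{R}$ --- coincide with the paper's.
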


\begin{proof} 
The equations $\nabla_{\partial_{x^i}}\mathfrak{t}=0$, $i=1,2$ become:
\begin{eqnarray*}
&\left(
\begin{array}{cc}
 {C_{12}{}^1} \mathfrak{t}^2{}_1 -{C_{11}{}^2} & {C_{11}{}^1}-{C_{12}{}^2}-2 {C_{12}{}^1} \mathfrak{t}^1{}_1  \\
 -{C_{11}{}^1} \mathfrak{t}^2{}_1 +{C_{12}{}^2} \mathfrak{t}^2{}_1 +2 {C_{11}{}^2} \mathfrak{t}^1{}_1  & {C_{11}{}^2}-{C_{12}{}^1} \mathfrak{t}^2{}_1  \\
\end{array}
\right)
=\left(\begin{array}{cc}0&0\\0&0\end{array}\right),
\\
&\left(
\begin{array}{cc}
 {C_{22}{}^1} \mathfrak{t}^2{}_1 -{C_{12}{}^2} & {C_{12}{}^1}-{C_{22}{}^2}-2 {C_{22}{}^1} \mathfrak{t}^1{}_1  \\
 -{C_{12}{}^1} \mathfrak{t}^2{}_1 +{C_{22}{}^2} \mathfrak{t}^2{}_1 +2 {C_{12}{}^2} \mathfrak{t}^1{}_1  & {C_{12}{}^2}-{C_{22}{}^1} \mathfrak{t}^2{}_1  \\
\end{array}
\right)=\left(\begin{array}{cc}0&0\\0&0\end{array}\right)\,.
\end{eqnarray*}
These equations yield the relations amongst the $C_{ij}{}^k$; a direct computation then yields $\rho_s$; we obtain $C_{22}{}^1\ne0$
since $\rho_s\ne0$. Furthermore, since $\rho_s\ne0$,  we have
$\dim\{\mathcal{P}^0(\mathcal{M})\}=1$ and the element given spans $\mathcal{P}^0(\mathcal{M})$.
\end{proof}
\begin{remark}
\rm
Let $\mathfrak{t}$ be a nilpotent K\"ahler tensor field as in Lemma \ref{L5.6}. 
Then, in contrast with Remark \ref{R1.11}, the modified Riemannian extension $(T^*M,g_{\nabla,0,\mathfrak{t}})$ is never anti-self-dual. 
Indeed, the affine structures in Lemma~\ref{L5.6} are never projectively flat unless $\rho_s=0$ (see Remark~\ref{EGR-3.5}).
\end{remark}

 \begin{lemma}\label{L5.8}
Let $\nabla$ define a Type~$\mathcal{B}$ structure on $\mathbb{R}^+\times\mathbb{R}$ with $\rho_s\ne0$. Suppose that there exists
$0\ne\mathfrak{t}\in\mathcal{P}^0(\mathcal{M})\cap M_2(\mathbb{R})$ with $\mathfrak{t}^1{}_2=0$ and $\mathfrak{t}^2{}_1\ne0$.
Rescale $\mathfrak{t}$ to assume $\mathfrak{t}^2{}_1=1$. Then
\medbreak\qquad
$C_{11}{}^1=C_{12}{}^2+2C_{11}{}^2\mathfrak{t}^1{}_1,\ C_{12}{}^1=0,\ C_{22}{}^1=0,
\ C_{22}{}^2=-2C_{12}{}^2\mathfrak{t}^1{}_1$,
\medbreak\qquad
$\rho=(x^1)^{-2}C_{12}{}^2\left(\begin{array}{cc}1&-2\mathfrak{t}^1{}_1\\0&0\end{array}\right),\ \  C_{12}{}^2\ne0$,
\ \ 
$\mathcal{P}^0(\mathcal{M})=\left(\begin{array}{cc}\mathfrak{t}^1{}_1&0\\1&-\mathfrak{t}^1{}_1\end{array}\right)\cdot\mathbb{R}$.
\end{lemma}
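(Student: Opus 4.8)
The plan is to exploit that, in the case under consideration, $\alpha=0$, so the parallel tensor is the \emph{constant} matrix $\mathfrak{t}$ itself. After the rescaling $\mathfrak{t}^2{}_1=1$, and using $\mathfrak{t}^1{}_2=0$ together with $\operatorname{Tr}\{\mathfrak{t}\}=0$, we may take $\mathfrak{t}=\left(\begin{smallmatrix}\mathfrak{t}^1{}_1&0\\1&-\mathfrak{t}^1{}_1\end{smallmatrix}\right)$. First I would write out the two parallelism equations $\nabla_{\partial_{x^i}}\mathfrak{t}=0$, $i=1,2$, exactly as in the proof of Lemma~\ref{L5.6}. Since $\mathfrak{t}$ is constant, the derivative terms drop and, writing $C_i$ for the constant matrix with entries $(C_i)^j{}_\ell=C_{i\ell}{}^j$, each equation becomes $(x^1)^{-1}[C_i,\mathfrak{t}]=0$; hence the parallelism conditions are purely the algebraic relations $[C_1,\mathfrak{t}]=[C_2,\mathfrak{t}]=0$ among the structure constants.

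Next I would read off the vanishing of each entry of these two $2\times2$ matrix equations. The relation $[C_1,\mathfrak{t}]=0$ gives $C_{12}{}^1=0$ from the diagonal and $C_{11}{}^1=C_{12}{}^2+2C_{11}{}^2\,\mathfrak{t}^1{}_1$ from the lower-left entry, while $[C_2,\mathfrak{t}]=0$ gives $C_{22}{}^1=0$ and, after substituting $C_{12}{}^1=0$, the relation $C_{22}{}^2=-2C_{12}{}^2\,\mathfrak{t}^1{}_1$. These are precisely the four asserted relations. Substituting them into the Ricci tensor of a Type~$\mathcal{B}$ surface and simplifying, a direct computation should yield $\rho=(x^1)^{-2}C_{12}{}^2\left(\begin{smallmatrix}1&-2\mathfrak{t}^1{}_1\\0&0\end{smallmatrix}\right)$.

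Finally, symmetrizing shows the $(1,1)$ entry of $\rho_s$ equals $(x^1)^{-2}C_{12}{}^2$, so the standing hypothesis $\rho_s\ne0$ is equivalent to $C_{12}{}^2\ne0$, which establishes the nondegeneracy claim $C_{12}{}^2\ne0$. Since $\rho_s\ne0$, Theorem~\ref{T1.5} excludes $\dim\{\mathcal{P}^0(\mathcal{M})\}\in\{2,3\}$ and forces $\dim\{\mathcal{P}^0(\mathcal{M})\}=1$; as $\mathfrak{t}$ is a nonzero element of $\mathcal{P}^0(\mathcal{M})$ it must span, giving $\mathcal{P}^0(\mathcal{M})=\mathfrak{t}\cdot\mathbb{R}$. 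The argument is essentially a mirror of Lemma~\ref{L5.6} and is computationally routine; the only steps requiring a little care are verifying the equivalence $\rho_s\ne0\Leftrightarrow C_{12}{}^2\ne0$ and checking that the Ricci computation collapses to exactly the stated rank-one form rather than leaving residual terms that would force the parallelism relations to be revisited.
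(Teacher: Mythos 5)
Your proposal is correct and follows essentially the same route as the paper: impose $\nabla\mathfrak{t}=0$ to obtain the relations among the $C_{ij}{}^k$, compute $\rho$ directly, deduce $C_{12}{}^2\ne0$ from $\rho_s\ne0$, and invoke Theorem~\ref{T1.5} to conclude $\dim\{\mathcal{P}^0(\mathcal{M})\}=1$ so that $\mathfrak{t}$ spans. Your repackaging of the parallelism equations as the commutator conditions $[C_i,\mathfrak{t}]=0$ is only a cosmetic (and correct) reformulation --- the two matrices displayed in the paper's proof are exactly these commutators.
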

 
 \begin{proof} Setting $\nabla\mathfrak{t}=0$ yields the relations
 \begin{eqnarray*}
 \left(\begin{array}{cc}C_{12}{}^1&-2C_{12}{}^1\mathfrak{t}^1{}_1\\-C_{11}{}^1+C_{12}{}^2+2C_{11}{}^2\mathfrak{t}^1{}_1&-C_{12}{}^1\end{array}\right)
 =\left(\begin{array}{cc}0&0\\0&0\end{array}\right),\\
 \left(\begin{array}{cc}C_{22}{}^1&-2C_{22}{}^1\mathfrak{t}^1{}_1\\-C_{12}{}^1+C_{22}{}^2+2C_{12}{}^2\mathfrak{t}^1{}_1&-C_{22}{}^1\end{array}\right)
 =\left(\begin{array}{cc}0&0\\0&0\end{array}\right).
 \end{eqnarray*}
 We solve these relations to obtain the relations amongst the $C_{ij}{}^k$. 
  We then compute $\rho$. Since $\rho_s\ne0$, $C_{12}{}^2\ne0$. 
  Furthermore, since $\rho_s\ne0$, 
$\dim\{\mathcal{P}^0(\mathcal{M})\}=1$ and the element given spans $\mathcal{P}^0(\mathcal{M})$.\end{proof}

 \begin{remark}
 \rm
 Modified Riemannian extensions of nilpotent tensor fields in Lemma \ref{L5.8} corresponding to $\mathfrak{t}^1{}_1=0$ are anti-self-dual whenever the deformation tensor field $\phi\equiv 0$.
 In this case Lemma~\ref{L5.8} gives $C_{12}{}^1=0,\ C_{22}{}^1=0,\ C_{22}{}^2=0$, and thus $\mathcal{M}$ is also of Type $\mathcal{A}$ (see Remark \ref{R1.8}). In this case, Remark \ref{R1.11} applies. 
 \end{remark}

 \begin{lemma}\label{L5.10}
Let $\nabla$ define a Type~$\mathcal{B}$ structure on $\mathbb{R}^+\times\mathbb{R}$ with $\rho_s\ne0$. Suppose that
there exists $0\ne\mathfrak{t}\in\mathcal{P}^0(\mathcal{M})\cap M_2(\mathbb{R})$ with 
$\mathfrak{t}^1{}_2=\mathfrak{t}^2{}_1=0$. Rescale $\mathfrak{t}$ to assume $\mathfrak{t}^1{}_1=1$. Then
\medbreak\qquad
$C_{11}{}^2=0,\ C_{12}{}^1=0,\ C_{12}{}^2=0,\ C_{22}{}^1=0$,
\medbreak\qquad
$ \rho=(x^1)^{-2}C_{22}{}^2dx^1\otimes dx^2,\  
 C_{22}{}^2\ne0,\ 
 \mathcal{P}^0(\mathcal{M})=\left(\begin{array}{cc}1&0\\0&-1\end{array}\right)\cdot\mathbb{R}$.
\end{lemma}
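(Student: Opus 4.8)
The plan is to follow exactly the same strategy used in the proofs of Lemmas~\ref{L5.6} and~\ref{L5.8}: start from the diagonal ansatz for the parallel tensor, impose $\nabla\mathfrak{t}=0$ to read off the algebraic constraints on the $C_{ij}{}^k$, and then compute the Ricci tensor to finish. Concretely, with $\mathfrak{t}^1{}_2=\mathfrak{t}^2{}_1=0$ and $\mathfrak{t}^1{}_1=1$ we have the constant matrix $\mathfrak{t}=\operatorname{diag}(1,-1)$, so $T=\mathfrak{t}$ has $\alpha=0$ in the notation of Lemma~\ref{L5.1}. First I would write out the two endomorphism equations $\nabla_{\partial_{x^i}}\mathfrak{t}=0$ for $i=1,2$. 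Because $\mathfrak{t}$ is a constant (diagonal, $\pm1$) matrix, the covariant-derivative conditions reduce to purely algebraic relations among the Christoffel symbols: the off-diagonal entries of each $2\times2$ matrix equation force $C_{11}{}^2$, $C_{12}{}^1$, $C_{12}{}^2$ and $C_{22}{}^1$ to vanish (these are precisely the coefficients that get multiplied by $\mathfrak{t}^1{}_1-(-\mathfrak{t}^2{}_2)=2$ or sit in an off-diagonal slot), while the diagonal entries are automatically satisfied. This is the para-K\"ahler pattern already seen structurally in Equation~(\ref{E3.f}) of Lemma~\ref{EGR-L3.7}, specialized to the Type~$\mathcal{B}$ homogeneity $\Gamma_{ij}{}^k=(x^1)^{-1}C_{ij}{}^k$.

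Second, having obtained $C_{11}{}^2=C_{12}{}^1=C_{12}{}^2=C_{22}{}^1=0$, I would substitute back and compute $\rho$ directly from the Type~$\mathcal{B}$ Christoffel symbols. With these four coefficients killed, only $C_{11}{}^1$ and $C_{22}{}^2$ survive, and a direct calculation of $\rho(X,Y)=\operatorname{Tr}\{Z\mapsto R(Z,X)Y\}$ yields $\rho=(x^1)^{-2}C_{22}{}^2\,dx^1\otimes dx^2$, so that $\rho_s=-\tfrac12(x^1)^{-2}C_{22}{}^2(dx^1\otimes dx^2+dx^2\otimes dx^1)$. The hypothesis $\rho_s\ne0$ then immediately forces $C_{22}{}^2\ne0$. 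Finally, since $\rho_s\ne0$, Theorem~\ref{T1.5} gives $\dim\{\mathcal{P}^0(\mathcal{M})\}=1$, so $\mathfrak{t}=\operatorname{diag}(1,-1)$ already spans $\mathcal{P}^0(\mathcal{M})$; this closes the claim $\mathcal{P}^0(\mathcal{M})=\operatorname{diag}(1,-1)\cdot\mathbb{R}$.

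The structure here is genuinely easier than in Lemma~\ref{L5.6}, so there is no real obstacle: because $\mathfrak{t}$ is diagonal with distinct eigenvalues $\pm1$, the equations $\nabla\mathfrak{t}=0$ decouple cleanly and each of the four forbidden Christoffel symbols appears \emph{alone} in some matrix entry, forcing it to vanish with no case analysis. The only point that requires a little care is the explicit Ricci computation once the four coefficients vanish, since I must verify that $\rho$ has exactly the rank-one form claimed (with a single nonzero component proportional to $C_{22}{}^2$) rather than also picking up a contribution from $C_{11}{}^1$; this is the one place where I would actually carry out the curvature arithmetic, but it is a routine substitution into the standard formula for the curvature of a Type~$\mathcal{B}$ connection. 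Everything else follows from the dimension count supplied by Theorem~\ref{T1.5}.
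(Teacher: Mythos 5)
Your proposal is correct and follows essentially the same route as the paper's proof: imposing $\nabla_{\partial_{x^i}}\mathfrak{t}=0$ on $\mathfrak{t}=\operatorname{diag}(1,-1)$ forces $C_{11}{}^2=C_{12}{}^1=C_{12}{}^2=C_{22}{}^1=0$ through the off-diagonal entries, a direct curvature computation then gives $\rho=(x^1)^{-2}C_{22}{}^2\,dx^1\otimes dx^2$, and $\rho_s\ne0$ yields both $C_{22}{}^2\ne0$ and, via Theorem~\ref{T1.5}, $\dim\{\mathcal{P}^0(\mathcal{M})\}=1$ so that $\mathfrak{t}$ spans. The only slip is the sign in your symmetrization (it should be $\rho_s=+\tfrac12(x^1)^{-2}C_{22}{}^2(dx^1\otimes dx^2+dx^2\otimes dx^1)$, not $-\tfrac12$), which has no effect on the argument.
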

 
 \begin{proof} Let $\mathfrak{t}=\left(\begin{array}{cc}1&0\\0&-1\end{array}\right)$.
 Setting $\nabla\mathfrak{t}=0$ yields the relations
 $$
 \left(\begin{array}{cc}0&-2C_{12}{}^1\\2C_{11}{}^2&0\end{array}\right)=
 \left(\begin{array}{cc}0&-2C_{22}{}^1\\2C_{12}{}^2&0\end{array}\right)=
 \left(\begin{array}{cc}0&0\\0&0\end{array}\right)\,.
 $$
 The relations of of Lemma~\ref{L5.10} concerning the $C_{ij}{}^k$ now follow. We determine $\rho$ by a direct
 computation; since $\rho_s\ne0$, $C_{22}{}^2\ne0$. Furthermore, since $\rho_s\ne0$, 
$\dim\{\mathcal{P}^0(\mathcal{M})\}=1$ and the element given spans $\mathcal{P}^0(\mathcal{M})$.
 \end{proof}

 \begin{remark}\rm
Theorem \ref{T1.9} shows that Type $\mathcal{A}$ surfaces  with 
$\dim\{\mathcal{P}^0(\mathcal{M})\}\geq 1$ have $\dim\{\mathcal{P}^0(\mathcal{M})\}=1$ in the non flat case and $\mathcal{P}^0(\mathcal{M})$ is generated by a nilpotent K\"ahler structure.
In opposition, the Type $\mathcal{B}$ geometries in Lemma \ref{L5.6} with $\dim\{\mathcal{P}^0(\mathcal{M})\}=1$ contain K\"ahler, para-K\"ahler and nilpotent K\"ahler examples. On the other hand, the Type $\mathcal{B}$ geometries treated in Lemma \ref{L5.8} and Lemma \ref{L5.10} only admit para-K\"ahler structures.
 \end{remark}
 
 \begin{lemma}\label{L5.12}
 Let $\nabla$ define a Type~$\mathcal{B}$ structure on $\mathbb{R}^+\times\mathbb{R}$ with $\rho_s\ne0$. Suppose that
there exists $0\ne\mathfrak{t}\in M_2(\mathbb{R})$ with $\mathfrak{t}^1{}_2\ne0$ and that there exists $\alpha\ne0$ so that
$(x^1)^\alpha\mathfrak{t}\in\mathcal{P}^0(\mathcal{M})$. Rescale $\mathfrak{t}$
so that $\mathfrak{t}^1{}_2=1$. Then
\medbreak\qquad
$C_{12}{}^1=C_{22}{}^2+2C_{22}{}^1\mathfrak{t}^1{}_1$,\quad
$C_{11}{}^2=\mathfrak{t}^1{}_1(-C_{11}{}^1+\mathfrak{t}^1{}_1(C_{22}{}^2+C_{22}{}^1\mathfrak{t}^1{}_1))$,
\medbreak\qquad
$C_{12}{}^2=-C_{22}{}^1(\mathfrak{t}^1{}_1)^2$,\quad
$\alpha=-C_{11}{}^1 + \mathfrak{t}^1{}_1 (2 C_{22}{}^2 + 3 C_{22}{}^1 \mathfrak{t}^1{}_1)\ne-1$,
\medbreak\qquad
$\rho_s\!=\!-(x^1)^{-2}C_{22}{}^1(1+\alpha)\!\left(\!\!\begin{array}{cc}(\mathfrak{t}^1{}_1)^2&\!\!\mathfrak{t}^1{}_1\\\mathfrak{t}^1{}_1&\!\!1\end{array}\!\!\right)$,\quad $C_{22}{}^1\ne0$,
\medbreak\qquad
$\mathcal{P}^0(\mathcal{M})=\!(x^1)^{\alpha}\!\left(\!\!
\begin{array}{cc}
	\mathfrak{t}^1{}_1&\!\!1\\-(\mathfrak{t}^1{}_1)^2&\!\!-\mathfrak{t}^1{}_1
\end{array}\right)\cdot\mathbb{R}$. 
\end{lemma}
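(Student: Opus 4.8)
The plan is to follow the template established in Lemmas~\ref{L5.6}--\ref{L5.10}: expand the parallelism condition $\nabla T=0$ for $T=(x^1)^\alpha\mathfrak{t}$, exploit the Type~$\mathcal{B}$ form $\Gamma_{ij}{}^k=(x^1)^{-1}C_{ij}{}^k$ to reduce these to purely algebraic matrix identities, solve for the $C_{ij}{}^k$ and for $\alpha$, and then read off $\rho_s$. First I would record that $\mathfrak{t}$ is nilpotent: since parallel tensors have constant eigenvalues by Lemma~\ref{L1.4}, $\det T=(x^1)^{2\alpha}\det\mathfrak{t}$ is constant, and as $\alpha\ne0$ this forces $\det\mathfrak{t}=0$. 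Together with $\operatorname{Tr}\mathfrak{t}=0$ and the normalization $\mathfrak{t}^1{}_2=1$, this gives $\mathfrak{t}^2{}_1=-(\mathfrak{t}^1{}_1)^2$, so $\mathfrak{t}$ already has the form displayed for the generator of $\mathcal{P}^0(\mathcal{M})$.

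The key observation, which keeps the computation clean, is that the parallelism equations become commutator relations for the adjoint action. Writing $T^i{}_j=(x^1)^\alpha\mathfrak{t}^i{}_j$ with $\mathfrak{t}$ constant, and setting $A^i{}_\ell:=C_{2\ell}{}^i$ and $B^i{}_\ell:=C_{1\ell}{}^i$, the equation $\nabla_{\partial_{x^2}}T=0$ reduces, after cancelling the common factor $(x^1)^{\alpha-1}$, to $[A,\mathfrak{t}]=0$, while $\nabla_{\partial_{x^1}}T=0$ reduces to $[B,\mathfrak{t}]=-\alpha\,\mathfrak{t}$. Since $\mathfrak{t}$ is a nonzero nilpotent, its commutant is the two-dimensional space $\{a\operatorname{Id}+b\,\mathfrak{t}\}$; writing $A=a\operatorname{Id}+b\,\mathfrak{t}$ and comparing entries immediately yields $C_{22}{}^1=b$, hence $C_{12}{}^2=-C_{22}{}^1(\mathfrak{t}^1{}_1)^2$ and $C_{12}{}^1=C_{22}{}^2+2\,C_{22}{}^1\mathfrak{t}^1{}_1$. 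The relation $[B,\mathfrak{t}]=-\alpha\,\mathfrak{t}$ says that $\mathfrak{t}$ is an eigenvector of $\operatorname{ad}_B$ with eigenvalue $-\alpha$; substituting the values of $C_{12}{}^1,C_{12}{}^2$ just found and matching the four entries (two of which coincide, since the commutator is trace free) determines $\alpha=-C_{11}{}^1+\mathfrak{t}^1{}_1(2\,C_{22}{}^2+3\,C_{22}{}^1\mathfrak{t}^1{}_1)$ from the off-diagonal entry and then $C_{11}{}^2=\mathfrak{t}^1{}_1(-C_{11}{}^1+\mathfrak{t}^1{}_1(C_{22}{}^2+C_{22}{}^1\mathfrak{t}^1{}_1))$ from the remaining entry, the last entry being automatically consistent.

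With every $C_{ij}{}^k$ expressed in terms of $C_{11}{}^1,C_{22}{}^1,C_{22}{}^2$ and $\mathfrak{t}^1{}_1$, the Ricci tensor is obtained by direct substitution into the coordinate formula for $\rho$ of a Type~$\mathcal{B}$ connection; this is where I expect the bookkeeping to be heaviest, although no conceptual difficulty arises, and the outcome is the stated rank-one $\rho_s$ whose matrix factor never vanishes. Consequently the hypothesis $\rho_s\ne0$ is equivalent to $C_{22}{}^1\ne0$ together with $\alpha\ne-1$, which gives the two nonvanishing claims. Finally, because $\rho_s\ne0$, Theorem~\ref{T1.5} rules out $\dim\{\mathcal{P}^0(\mathcal{M})\}\in\{2,3\}$, so the nonzero element $(x^1)^\alpha\mathfrak{t}$ we began with must span $\mathcal{P}^0(\mathcal{M})$, completing the proof.
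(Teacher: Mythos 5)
Your proposal is correct and follows the same route as the paper's proof: expand $\nabla T=0$ in the canonical Type~$\mathcal{B}$ coordinates to obtain purely algebraic relations on the $C_{ij}{}^k$, solve for the Christoffel symbols and $\alpha$, compute $\rho_s$ by direct substitution, and invoke Theorem~\ref{T1.5} together with $\rho_s\ne0$ to conclude that the given element spans $\mathcal{P}^0(\mathcal{M})$. The only difference is organizational: where the paper sets the two displayed matrices to zero and solves entry by entry, you repackage the parallelism equations as $[A,\mathfrak{t}]=0$ and $[B,\mathfrak{t}]=-\alpha\,\mathfrak{t}$ with $A^i{}_\ell=C_{2\ell}{}^i$ and $B^i{}_\ell=C_{1\ell}{}^i$, and then use that the centralizer of a nonzero nilpotent $2\times2$ matrix is $\operatorname{span}\{\operatorname{Id},\mathfrak{t}\}$ --- a cleaner derivation of the identical relations, with the $(2,1)$ entry automatically consistent exactly as you claim.
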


\begin{proof} As noted previously, $\alpha\ne0$ implies $\mathfrak{t}$ is nilpotent. Since we assumed $\mathfrak{t}^1{}_2=1$,
$$
T=(x^1)^\alpha\left(
\begin{array}{cc}
\mathfrak{t}^1{}_1&1\\-(\mathfrak{t}^1{}_1)^2&-\mathfrak{t}^1{}_1
\end{array}\right)\,.
$$
The conditions $\nabla_{\partial_{x^i}} T=0$ $(i=1,2)$ imply
the vanishing of the matrices
\medbreak$\left(
\begin{array}{cc}
-	C_{11}{}^2-(C_{12}{}^1\mathfrak{t}^1{}_1-\alpha)\mathfrak{t}^1{}_1&C_{11}{}^1-C_{12}{}^2+\alpha-2C_{12}{}^1\mathfrak{t}^1{}_1\\
\mathfrak{t}^1{}_1(2C_{11}{}^2+(C_{11}{}^1-C_{12}{}^2-\alpha)\mathfrak{t}^1{}_1)&C_{11}{}^2+(C_{12}{}^1\mathfrak{t}^1{}_1-\alpha)\mathfrak{t}^1{}_1
\end{array}\right)$
\medbreak\noindent and
\medbreak$\left(
\begin{array}{cc}
-C_{12}{}^2-C_{22}{}^1(\mathfrak{t}^1{}_1)^2&C_{12}{}^1-C_{22}{}^2-2C_{22}{}^1\mathfrak{t}^1{}_1\\
\mathfrak{t}^1{}_1(2C_{12}{}^2+(C_{12}{}^1-C_{22}{}^2)\mathfrak{t}^1{}_1)&C_{12}{}^2+C_{22}{}^1(\mathfrak{t}^1{}_1)^2
\end{array}\right)$.
\medbreak\noindent
We solve these relations to obtain the relations amongst the $C_{ij}{}^k$. The expression of $\alpha$ and
$\rho_s$ then follows by a direct computation.  Since $\rho_s\ne0$, we obtain $C_{22}{}^1\ne0$, $\alpha\ne0$,
and $\alpha\ne-1$. Furthermore, since $\rho_s\ne0$, 
$\dim\{\mathcal{P}^0(\mathcal{M})\}=1$ and the element given spans $\mathcal{P}^0(\mathcal{M})$.
\end{proof}

\begin{remark}\rm
Let $T$ be a nilpotent K\"ahler tensor field as in Lemma \ref{L5.12}. The modified Riemannian extension $(T^*M,g_{\nabla,0,T})$ is not anti-self-dual.
\end{remark}

\begin{lemma}\label{L5.15}
Let $\nabla$ define a Type~$\mathcal{B}$ structure on $\mathbb{R}^+\times\mathbb{R}$ with $\rho_s\ne0$.
Suppose that
there exists $0\ne\mathfrak{t}\in M_2(\mathbb{R})$ with $\mathfrak{t}^1{}_2=0$ and that there exists $\alpha\ne0$ so that
$(x^1)^\alpha\mathfrak{t}\in\mathcal{P}^0(\mathcal{M})$. Since $\mathfrak{t}$ is nilpotent, $\mathfrak{t}^1{}_1=0$ and
$\mathfrak{t}^2{}_1\ne0$. Rescale $\mathfrak{t}$
so that $\mathfrak{t}^2{}_1=1$. Then
\medbreak\qquad
$C_{12}{}^1=0,\quad C_{22}{}^1=0,\quad C_{22}{}^2=0,\quad \alpha=C_{11}{}^1-C_{12}{}^2\notin\{ 0,-1\}$,
\medbreak\qquad $\rho=(x^1)^{-2}(1+\alpha)C_{12}{}^2dx^1\otimes dx^1$,
\quad
$\mathcal{P}^0(\mathcal{M})=
(x^1)^{C_{11}{}^1-C_{12}{}^2}\partial_{x^2}\otimes dx^1\cdot\mathbb{R}$.
\end{lemma}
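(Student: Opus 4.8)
The plan is to follow the template already established in Lemmas~\ref{L5.6}--\ref{L5.12}: first pin down the algebraic shape of $\mathfrak{t}$, then impose $\nabla T=0$ entry-by-entry to read off the linear relations among the $C_{ij}{}^k$ together with the value of $\alpha$, and finally compute $\rho$ and invoke the hypothesis $\rho_s\ne0$ to rule out the degenerate subcases and to fix the dimension of $\mathcal{P}^0(\mathcal{M})$.

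First I would justify the stated normalization of $\mathfrak{t}$. Since $T=(x^1)^\alpha\mathfrak{t}$ is parallel, Lemma~\ref{L1.4} shows that its eigenvalues are constant; but these eigenvalues are $(x^1)^\alpha$ times the eigenvalues of $\mathfrak{t}$, so a nonzero eigenvalue of $\mathfrak{t}$ would force $\alpha=0$, contrary to hypothesis. Hence $\mathfrak{t}$ is nilpotent. As $\mathfrak{t}\in M_2^0(\mathbb{R})$ is trace free this gives $\det\{\mathfrak{t}\}=-(\mathfrak{t}^1{}_1)^2-\mathfrak{t}^1{}_2\mathfrak{t}^2{}_1=0$; combining this with $\mathfrak{t}^1{}_2=0$ yields $\mathfrak{t}^1{}_1=0$, and then $\mathfrak{t}\ne0$ forces $\mathfrak{t}^2{}_1\ne0$. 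After rescaling so that $\mathfrak{t}^2{}_1=1$ we obtain $\mathfrak{t}=\partial_{x^2}\otimes dx^1$ and hence $T=(x^1)^\alpha\,\partial_{x^2}\otimes dx^1$, whose only nonzero component is $T^2{}_1=(x^1)^\alpha$.

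Next I would write out $\nabla_{\partial_{x^i}}T=0$ component-by-component using $\Gamma_{ij}{}^k=(x^1)^{-1}C_{ij}{}^k$. Differentiating $T^2{}_1=(x^1)^\alpha$ produces the factor $\alpha(x^1)^{\alpha-1}$, which matches the $(x^1)^{-1}$ coming from each Christoffel symbol, so that every equation is a common multiple of $(x^1)^{\alpha-1}$. Concretely, the $(1,1)$ and $(2,2)$ entries of $\nabla_{\partial_{x^1}}T$ each reduce to $C_{12}{}^1=0$; the $(1,1)$ and $(2,2)$ entries of $\nabla_{\partial_{x^2}}T$ each reduce to $C_{22}{}^1=0$; the $(2,1)$ entry of $\nabla_{\partial_{x^2}}T$ gives $C_{22}{}^2-C_{12}{}^1=0$, hence $C_{22}{}^2=0$; and the $(2,1)$ entry of $\nabla_{\partial_{x^1}}T$ gives $\alpha+C_{12}{}^2-C_{11}{}^1=0$, i.e. $\alpha=C_{11}{}^1-C_{12}{}^2$. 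The remaining $(1,2)$ entries vanish identically, so no further constraint arises.

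Finally I would compute $\rho$ directly from the reduced Christoffel symbols. One checks that $\rho_{12}=\rho_{21}=\rho_{22}=0$ while $\rho_{11}=(x^1)^{-2}C_{12}{}^2(1+C_{11}{}^1-C_{12}{}^2)$, so that $\rho=(x^1)^{-2}(1+\alpha)C_{12}{}^2\,dx^1\otimes dx^1$. This $\rho$ is symmetric, so $\rho_s\ne0$ forces $C_{12}{}^2\ne0$ and $1+\alpha\ne0$; together with the standing assumption $\alpha\ne0$ this gives $\alpha\notin\{0,-1\}$. Since $\rho_s\ne0$, Theorem~\ref{T1.5} yields $\dim\{\mathcal{P}^0(\mathcal{M})\}=1$, so the single element $T=(x^1)^{C_{11}{}^1-C_{12}{}^2}\,\partial_{x^2}\otimes dx^1$ spans $\mathcal{P}^0(\mathcal{M})$, completing the proof. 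I do not anticipate a genuine obstacle: the substance is purely the bookkeeping of matching the powers of $x^1$ in the covariant-derivative equations, the one conceptually important point being that it is precisely the $(2,1)$-component of $\nabla_{\partial_{x^1}}T$ that determines the exponent $\alpha$.
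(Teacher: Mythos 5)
Your proposal is correct and follows essentially the same route as the paper: impose $\nabla T=0$ componentwise to extract $C_{12}{}^1=C_{22}{}^1=C_{22}{}^2=0$ and $\alpha=C_{11}{}^1-C_{12}{}^2$, compute $\rho$ directly, and use $\rho=\rho_s\ne0$ together with Theorem~\ref{T1.5} to force $C_{12}{}^2\ne0$, $\alpha\notin\{0,-1\}$, and $\dim\{\mathcal{P}^0(\mathcal{M})\}=1$. Your preliminary justification that $\mathfrak{t}$ is nilpotent (via constancy of eigenvalues rather than of $\det\{T\}=(x^1)^{2\alpha}\det\{\mathfrak{t}\}$) is an equivalent variant of the argument the paper gives in the discussion preceding Lemma~\ref{L5.6}.
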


\begin{proof} Setting $\nabla T=0$ yields the vanishing of the matrices
$$
\left(\begin{array}{cc}C_{12}{}^1&0\\-C_{11}{}^1+C_{12}{}^2+\alpha&-C_{12}{}^1\end{array}\right)\text{ and }
\left(\begin{array}{cc}C_{22}{}^1&0\\-C_{12}{}^1+C_{22}{}^2&-C_{22}{}^1\end{array}\right)\,.
$$
The relations amongst the $C_{ij}{}^k$ follows and $\alpha$ is determined. A direct computation
yields the Ricci tensor. Since $\rho=\rho_s\ne0$,
$\dim\{\mathcal{P}^0(\mathcal{M})\}=1$ and the element given spans $\dim\{\mathcal{P}^0(\mathcal{M})\}$.
\end{proof}

\begin{remark}\rm
We note that the structure of Lemma~\ref{L5.15} is also Type~$\mathcal{A}$ (see Remark \ref{R1.8}); this is the only
both Type~$\mathcal{A}$ and Type~$\mathcal{B}$ structure which is not flat with $\mathcal{P}^0(\mathcal{M})\ne\{0\}$ up to linear equivalence.
We also see by inspection that the structures of Lemma~\ref{L5.4}, Lemma~\ref{L5.6}, Lemma~\ref{L5.8}, Lemma~\ref{L5.10}, 
Lemma~\ref{L5.12}, and Lemma~\ref{L5.15} are distinct; there is no intersection amongst these classes.
\end{remark}

\subsection*{Dedication: \rm In memory of the victims of terrorism 
Thursday 17 August 2017 (Barcelona Espana), 
Saturday 12 August 2017 (Charlottesville USA),
etc.}


\begin{thebibliography}{99}

\bibitem{EGR-Afifi}
Z. Afifi,  
Riemann extensions of affine connected spaces, 
\emph{Quart. J. Math., Oxford Ser. (2)} \textbf{5} (1954), 312--320.
	
\bibitem{AMK08} 
T. Arias-Marco and O. Kowalski,
Classification of locally homogeneous affine connections with arbitrary torsion on 2-manifolds,
{\it Monatsh. Math. \bf 153} (2008), 1--18.

\bibitem{Bach}
R. Bach, 	
Zur Weylschen Relativit\"atstheorie und der Weylschen Erweiterung des Kr\"ummung\-stensorbegriffs.
\emph{Math. Z.} \textbf{9} (1921), 110--135.

\bibitem{BVGR}
M. Brozos-V\'{a}zquez and E. Garc\'{i}a-R\'{i}o,
Four-dimensional neutral signature self-dual gradient Ricci solitons, 
\emph{Indiana Univ. Math. J.} \textbf{65} (2016), 1921--1943.


\bibitem{BGG18} M. Brozos-V\'{a}zquez, E. Garc\'{i}a-R\'{i}o, and P. Gilkey, 
Homogeneous affine surfaces: affine Killing vector fields and gradient Ricci solitons, 
{\it J. Math. Soc. Japan} {\bf70} (2018), 25--69. 

\bibitem{BVGRGVR} 
M. Brozos-V\'azquez, E. Garc\'{i}a-R\'{i}o, P. Gilkey, and X. Valle-Regueiro,
Half conformally flat generalized quasi-Einstein manifolds of metric signature $(2,2)$,
 arXiv:1702.06714v1 [math.DG], to appear in 
\emph{Int. J. Math.}

\bibitem{CGGV09} E. Calvi\~no-Louzao, E. Garc\'{\i}a--R\'{\i}o, P. Gilkey, and R. V\'{a}zquez-Lorenzo,
The geometry of modified Riemannian extensions, 
{\it  Proc. R. Soc. Lond. Ser. A Math. Phys. Eng. Sci.} {\bf 465} (2009), 2023-2040.

\bibitem{CGGV} E. Calvino-Louzao, E. Garc\'{i}a-R\'{i}o, I. Gutierrez-Rodriguez, and R. Vazquez-Lorenzo,
Bach-flat isotropic gradient Ricci solitons, 
{\it Pacific J. Math.} {\bf 293} (2018), 75--99.

\bibitem{GRS1}
H.-D. Cao, G. Catino, Q. Chen, C. Mantegazza, and L. Mazzieri, 
Bach-flat gradient steady Ricci solitons, 
\emph{Calc. Var. Partial Differential Equations} \textbf{49} (2014), 125--138.

\bibitem{GRS2}
H.-D. Cao, and Q. Chen,
On Bach-flat gradient shrinking Ricci solitons,
\emph{Duke Math. J.} \textbf{162} (2013), 1149--1169.


\bibitem{chen-wang}
X. Chen, and Y. Wang, 
On four-dimensional Anti-self-dual Gradient Ricci solitons, 
\emph{J. Geom. Anal.} \textbf{25} (2015), 1335--1343. 

\bibitem{C04} 
V. Cort\'es, C. Mayer, T. Mohaupt, and F. Saueressig, 
Special geometry of Euclidean supersymmetry 1. Vector multiplets, 
{\it J. High Energy Physics} 2004, 028.

\bibitem{DGP18}
D. D'Ascanio, P. Gilkey, and P. Pisani,
The geometry of locally symmetric affine surfaces,
 arXiv:1706.04958v1 [math.DG], to appear in \emph{Vietnam J. Math.} (Zeidler memorial volume).


\bibitem{Derdzinski}
A. Derdzinski, 
Connections with skew-symmetric Ricci tensor on surfaces,
{\it Results Math.} {\bf52} (2008), 223--245.


	\bibitem{Fox}
	J. F. Fox,
	Remarks on symplectic sectional curvature,
	\emph{Differential Geom. Appl.} \textbf{50} (2017), 52--70.
	
	\bibitem{GRS}
	I. Gelfand, V. Retakh, and M. Shubin, 
	Fedosov manifolds,
	\emph{Adv. Math.} \textbf{136} (1998), 104--140. 
	
	
	\bibitem{Jelonek}
	W. Jelonek,
	Affine surfaces with parallel shape operators,
	\emph{Ann. Polon. Math.} \textbf{56} (1992),179--186.


\bibitem{KOV}
O. Kowalski, B. Opozda, and Vl\'a\v{z}ek,  
A classification of locally homogeneous affine connections with skew-symmetric Ricci tensor on 2-dimensional manifolds
\emph{Monatsh. Math.} \textbf{130} (2000), 109--125. 

\bibitem{NN57} 
A. Newlander and L. Nirenberg, 
Complex analytic coordinates in almost complex manifolds,
{\it Ann. of Math. \bf 65} (1957), 391--404.

\bibitem{Op04}
B. Opozda, 
A classification of locally homogeneous connections on $2$-dimensional manifolds, 
\emph{Differential Geom. Appl.} \textbf{21} (2004), 173--198.

\bibitem{Op1}
B. Opozda,
A class of projectively flat surfaces,
\emph{Math. Z.} \textbf{219} (1995), 77--92.

\bibitem{W64} 
Y.-C. Wong,   
Two dimensional linear connexions with zero torsion and recurrent curvature, 
\emph{Monatsh. Math.} \textbf{68} (1964), 175--184. 
\end{thebibliography}
\end{document}